\newcommand{\ignore}[1]{}
\newcommand{\abs}[1]{\left| #1 \right|}
\newcommand{\expr}[1]{\left( #1 \right)}
\newcommand{\set}[1]{\left\{ #1 \right\}}
\newcommand{\scalar}[1]{\left\langle #1 \right\rangle}
\newcommand{\A}{\mathcal{A}}
\newcommand{\C}{\mathbf{C}}
\newcommand{\R}{\mathbf{R}}
\newcommand{\pr}{\mathbf{P}}
\newcommand{\domain}{\mathcal{D}}
\newcommand{\ind}{\mathbf{1}}
\newcommand{\eps}{\varepsilon}
\newcommand{\ph}{\varphi}
\newcommand{\ro}{\varrho}
\newcommand{\thet}{\vartheta}
\newcommand{\catalan}{\mathcal{C}}
\newcommand{\hl}{{(0, \infty)}}
\newcommand{\laplace}{\mathcal{L}}
\theoremstyle{plain}
\newtheorem{theorem}{Theorem}[section]
\newtheorem{lemma}[theorem]{Lemma}
\newtheorem{corollary}[theorem]{Corollary}
\newtheorem{proposition}[theorem]{Proposition}
\theoremstyle{definition}
\newtheorem{remark}[theorem]{Remark}
\newtheorem{assumption}[theorem]{Assumption}
\theoremstyle{remark}
\numberwithin{equation}{section}
\DeclareMathOperator{\re}{Re}
\DeclareMathOperator{\im}{Im}
\DeclareMathOperator{\var}{Var}
\DeclareMathOperator{\Arg}{Arg}
\newcommand{\formula}[2][nolabel]
{\ifthenelse{\equal{#1}{nolabel}}
 {\begin{align*} #2 \end{align*}}
 {\ifthenelse{\equal{#1}{}}
  {\begin{align} #2 \end{align}}
  {\begin{align} \label{#1} #2 \end{align}}
 }
}
\begin{document}

%
%                            ---------- o ----------
%

\title[First passage times for subordinate Brownian motions]{First passage times for subordinate Brownian motions}

\author{Mateusz Kwa{\'s}nicki, Jacek Ma{\l}ecki, Micha{\l} Ryznar}
%\thanks{The authors were supported by the Polish Ministry of Science and Higher Education grant no.\ N~N201 373136}
\thanks{Mateusz Kwa{\'s}nicki received financial support of the Foundation for Polish Science}
\address{Mateusz Kwa{\'s}nicki, Jacek Ma{\l}ecki, Micha{\l} Ryznar \\ Institute of Mathematics and Computer Science \\ Wroc{\l}aw University of Technology \\ ul. Wybrze{\.z}e Wyspia{\'n}\-skiego 27 \\ 50-370 Wroc{\l}aw, Poland}
\email{mateusz.kwasnicki@pwr.wroc.pl, jacek.malecki@pwr.wroc.pl, michal.{\linebreak}ryznar@pwr.wroc.pl}
\address{Mateusz Kwa{\'s}nicki \\ Institute of Mathematics \\ Polish Academy of Sciences \\ ul. {\'S}niadeckich 8 \\ 00-976 Warszawa, Poland}
\email{m.kwasnicki@impan.pl}
\address{Jacek Ma\l{}ecki \\ LAREMA \\ Universit\'e d'Angers \\ 2 Bd Lavoisier \\ 49045 Angers cedex 1, France}

\begin{abstract}
Let $X_t$ be a subordinate Brownian motion, and suppose that the L{\'e}vy measure of the underlying subordinator has a completely monotone density. Under very mild conditions, we find integral formulae for the tail distribution $\pr(\tau_x > t)$ of first passage times $\tau_x$ through a barrier at $x > 0$, and its derivatives in $t$. As a corollary, we examine the asymptotic behaviour of $\pr(\tau_x > t)$ and its $t$-derivatives, either as $t \to \infty$ or $x \to 0^+$. 
\end{abstract}

\maketitle

%
%                            ---------- o ----------
%

\section{Introduction}
\label{sec:intro}

The present article complements and extends the results of the recent paper~\cite{bib:mk10}, where spectral theory for a class of L{\'e}vy processes killed upon leaving a half-line was developed. In a closely related paper~\cite{bib:kmr11}, first passage times were studied for a rather general class of one-dimensional L\'evy processes. In the present article, more detailed properties of first passage times are established for processes considered in~\cite{bib:mk10}: symmetric L\'evy processes, whose L\'evy measure has a completely monotone density function on $(0, \infty)$. More precisely, we prove asymptotic fomulae, regularity, and estimates of the tail distribution $\pr(\tau_x > t)$ of the first passage time through a barrier at the level $x$ for a L{\'e}vy process $X_t$:
\formula{
 \tau_x & = \inf \set{t \ge 0 : X_t \ge x} , && x \ge 0 ,
}
as well as its derivatives in $t$. Alternatively, the results can be stated in terms of the \emph{supremum functional} $M_t = \sup_{s \in [0, t]} X_s$, since we have $\pr(\tau_x > t) = \pr(M_t < x)$ for all $t, x \ge 0$.

In~\cite{bib:mk10}, a formula was given for generalised eigenfunctions $F_\lambda(x)$ of the transition semigroup of the killed process. As an application, the distribution of first passage times was expressed in terms of the eigenfunctions $F_\lambda(x)$. The full statement of this result was only announced, and a formal proof was given under more restrictive conditions. In the present paper, we provide the proof in the general case (Theorem~\ref{th:fpt}). The expression for the distribution of $\tau_x$ is then used to find estimates and asymptotic expansion of $(d / dt)^n \pr(\tau_x > t)$. This requires detailed analysis of the eigenfunctions $F_\lambda(x)$.

The double Laplace transform (in $t$ and $x$) of $\pr(\tau_x > t)$ is known for general L{\'e}vy processes since 1957 due to the result of Baxter and Donsker (Theorem~1 in~\cite{bib:bd57}). For symmetric L{\'e}vy processes $X_t$ with L{\'e}vy-Khintchin exponent $\Psi(\xi)$,
\formula[eq:bd]{
\begin{aligned}
 & \int_0^\infty \int_0^\infty e^{-\xi x - z t} \pr(\tau_x > t) dx dt \\ & \qquad = \frac{1}{\xi \sqrt{z}} \, \exp \expr{-\frac{1}{\pi} \int_0^\infty \frac{\xi \, \log(z + \Psi(\zeta))}{\xi^2 + \zeta^2} \, d\zeta} .
\end{aligned}
}
However, the double Laplace transform in~\eqref{eq:bd} has been inverted only for few special cases. It is a classical result that for the Brownian motion, $\tau_x$ is the $(1/2)$-stable subordinator. An explicit formula for the distribution of $\tau_x$ was found for the Cauchy process (the symmetric $1$-stable process) by Darling~\cite{bib:d56}, for a compound Poisson process with $\Psi(\xi) = 1 - \cos \xi$ by Baxter and Donsker~\cite{bib:bd57}, and for the Poisson process with drift by Pyke~\cite{bib:p59}. A formula for the single Laplace transform for symmetric L{\'e}vy processes, under some mild assumptions, was given recently in~\cite{bib:kmr11} (see Theorem~\ref{th:suplaplace} below). In the development of the fluctuation theory for L{\'e}vy processes, many new identities involving first passage times were derived (see~\cite{bib:b96, bib:d07, bib:k06, bib:s99} for a general account on fluctuation theory), including various other characterisations of $\pr(\tau_x > t)$, at least in the stable case, see~\cite{bib:bdp08, bib:b73, bib:d56, bib:d87, bib:ds10, bib:gj09, bib:gj10, bib:hk11, bib:hk09, bib:k10a, bib:k10, bib:z64}. First passage times $\tau_x$ and the supremum functional $M_t$ play an important role in many areas of applied probability (\cite{bib:ak05, bib:bnmr01}), mathematical physics (\cite{bib:ka08, bib:kkm07}), and also in potential theory of L\'evy processes (\cite{bib:bbkrsv09, bib:ksv9, bib:ksv10a, bib:ksv10, bib:ksv11}).

The main result of this article is an explicit, applicable for numerical computations expression for $\pr(\tau_x > t)$ for a class of symmetric L{\'e}vy processes, which includes symmetric $\alpha$-stable processes, relativistic $\alpha$-stable processes and geometric $\alpha$-stable processes (in particular, the variance gamma process) and many others. More precisely, the following assumption is in force throughout the article.

\begin{assumption}
\label{asmp}
Any of the following equivalent conditions is satisfied (see Proposition~2.13 in~\cite{bib:mk10}):
\begin{enumerate}
\item[(a)]
{$X_t$} is a \emph{subordinate Brownian motion}, $X_t = B_{Z_t}$, and the L{\'e}vy measure of the subordinator $Z_t$ has a completely monotone density. Here $B_s$ is the one-dimensional Brownian motion ($\var B_s = 2 s$), $Z_t$ is a \emph{subordinator} (nonnegative L{\'e}vy process), and $B_s$ and $Z_t$ are independent processes;
\item[(b)]
$X_t$ is a symmetric L{\'e}vy process, whose L{\'e}vy measure has a completely monotone density on $(0, \infty)$;
\item[(c)]
$X_t$ is a L{\'e}vy process with L{\'e}vy-Khintchine exponent $\Psi(\xi) = \psi(\xi^2)$ for some \emph{complete Bernstein function} $\psi(\xi)$.
\end{enumerate}
We assume that $X_t$ is non-trivial, that is, $X_t$ is not constantly $0$.
\end{assumption}

\begin{remark}
All explicit formulae and estimates proved in this article are given in terms of the complete Bernstein function $\psi(\xi)$. Translation to the L{\'e}vy-Khintchine exponent $\Psi(\xi) = \psi(\xi^2)$ is immediate, but usually results in less elegant expressions.

In this article, the term \emph{explicit formula} is used for an expression involving a finite number of (absolutely convergent) integrals, elementary functions and the function $\psi$. The complete Bernstein function $\psi$ extends to a holomorphic function on $\C \setminus (-\infty, 0]$ (see Preliminaries). Sometimes (namely, in the formula for $F_\lambda(x)$) we also use this holomorphic extension of $\psi$.\qed
\end{remark}

Our proofs are based on the following two theorems.

\begin{theorem}[Corollary~4.2 in~\cite{bib:kmr11}]
\label{th:suplaplace}
We have
\formula[eq:suplaplace]{
\begin{aligned}
 & \int_0^\infty e^{-\xi x} \pr(\tau_x > t) dx = \frac{2}{\pi} \int_0^\infty \frac{\lambda}{\lambda^2 + \xi^2} \, \frac{\psi'(\lambda^2)}{\sqrt{\psi(\lambda^2)}} \times \\ & \hspace*{7em} \times \exp\expr{\frac{1}{\pi} \int_0^\infty \frac{\xi \log \frac{\lambda^2 - \zeta^2}{\psi(\lambda^2) - \psi(\zeta^2)}}{\xi^2 + \zeta^2} \, d\zeta} e^{-t \psi(\lambda^2)} d\lambda
\end{aligned}
}
for all $t, \xi > 0$.
\qed
\end{theorem}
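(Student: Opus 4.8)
The plan is to obtain~\eqref{eq:suplaplace} by inverting the Baxter--Donsker double transform~\eqref{eq:bd} in the time variable, i.e.\ in $z$. Fix $\xi > 0$ and put
\formula{
 g(\xi, t) & = \int_0^\infty e^{-\xi x} \pr(\tau_x > t) \, dx , \\
 G(\xi, z) & = \frac{1}{\xi \sqrt{z}} \, \exp\expr{-\frac{1}{\pi} \int_0^\infty \frac{\xi \log(z + \psi(\zeta^2))}{\xi^2 + \zeta^2} \, d\zeta} ,
}
so that~\eqref{eq:bd} reads $\int_0^\infty e^{-z t} g(\xi, t) \, dt = G(\xi, z)$ for $\re z > 0$. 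Because $\psi$ is a complete Bernstein function with $\psi(\zeta^2) > 0$ for $\zeta > 0$, the map $z \mapsto G(\xi, z)$ extends holomorphically to $\C \setminus (-\infty, 0]$, is real on $(0, \infty)$, and is $O(\abs{z}^{-1})$ at infinity and $O(\abs{z}^{-1/2})$ near the origin; together with the continuity of $t \mapsto g(\xi, t)$ (which holds since $M_t$ has no atoms for $t > 0$ under Assumption~\ref{asmp}), this allows the Bromwich contour for the inverse Laplace transform to be collapsed onto the branch cut $(-\infty, 0]$. By the Schwarz reflection principle this gives
\formula{
 g(\xi, t) & = -\frac{1}{\pi} \int_0^\infty e^{-s t} \, \im G(\xi, -s + i 0^+) \, ds .
}

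The next step is to compute $G(\xi, -s + i 0^+)$. Since $\psi$ is strictly increasing on $(0, \infty)$ with $\psi(0^+) = 0$, for $0 < s < \psi(\infty)$ there is a unique $\lambda = \lambda(s) > 0$ with $\psi(\lambda^2) = s$, and $\psi(\zeta^2) - s$ changes sign at $\zeta = \lambda$. As $z \to -s$ from above, $\sqrt{z} \to i \sqrt{s}$, while $\log(z + \psi(\zeta^2))$ tends to $\log(\psi(\zeta^2) - s)$ for $\zeta > \lambda$ and to $\log(s - \psi(\zeta^2)) + i \pi$ for $\zeta < \lambda$; splitting the inner integral at $\zeta = \lambda$ and collecting the phases shows that $G(\xi, -s + i 0^+)$ has argument $-\tfrac{\pi}{2} - \arctan(\lambda / \xi)$, so that, using $\cos(\arctan(\lambda/\xi)) = \xi / \sqrt{\xi^2 + \lambda^2}$,
\formula{
 \im G(\xi, -s + i 0^+) & = -\frac{1}{\sqrt{s} \, \sqrt{\xi^2 + \lambda^2}} \, \exp\expr{-\frac{1}{\pi} \int_0^\infty \frac{\xi \log \abs{\psi(\zeta^2) - s}}{\xi^2 + \zeta^2} \, d\zeta}
}
for $0 < s < \psi(\infty)$, whereas $\im G(\xi, -s + i 0^+) = 0$ for $s \ge \psi(\infty)$ (the argument then equals $-\pi$). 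Substituting into the inversion formula and changing the variable by $s = \psi(\lambda^2)$, $ds = 2 \lambda \psi'(\lambda^2) \, d\lambda$, yields
\formula{
 g(\xi, t) & = \frac{2}{\pi} \int_0^\infty \frac{\lambda \, \psi'(\lambda^2)}{\sqrt{\psi(\lambda^2)} \, \sqrt{\xi^2 + \lambda^2}} \, \exp\expr{-\frac{1}{\pi} \int_0^\infty \frac{\xi \log \abs{\psi(\lambda^2) - \psi(\zeta^2)}}{\xi^2 + \zeta^2} \, d\zeta} e^{-t \psi(\lambda^2)} \, d\lambda .
}

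It remains to recast the exponential factor in the form appearing in~\eqref{eq:suplaplace}. For this I would use the classical identity
\formula{
 \frac{1}{\pi} \int_0^\infty \frac{\xi \log \abs{\lambda^2 - \zeta^2}}{\xi^2 + \zeta^2} \, d\zeta & = \frac{1}{2} \log(\xi^2 + \lambda^2) ,
}
which follows by writing $\log \abs{\lambda^2 - \zeta^2} = \log \abs{\zeta - \lambda} + \log \abs{\zeta + \lambda}$, extending the integral to $\R$ by evenness, and recognizing $\tfrac{1}{\pi} \xi (\xi^2 + \zeta^2)^{-1}$ as the Poisson kernel of the upper half-plane at $i\xi$, so that each summand integrates to the boundary value $\log \abs{i\xi \mp \lambda} = \tfrac{1}{2} \log(\xi^2 + \lambda^2)$ of the harmonic function $\log \abs{w \mp \lambda}$. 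Since the exponential of $\tfrac{1}{\pi} \int_0^\infty \xi (\xi^2 + \zeta^2)^{-1} \log \abs{\lambda^2 - \zeta^2} \, d\zeta$ equals $\sqrt{\xi^2 + \lambda^2}$, dividing the exponential factor in the last display by it and compensating inside the exponential turns the prefactor $\sqrt{\xi^2 + \lambda^2}$ in the denominator into $\xi^2 + \lambda^2$ and replaces $\log \abs{\psi(\lambda^2) - \psi(\zeta^2)}$ by $- \log \tfrac{\lambda^2 - \zeta^2}{\psi(\lambda^2) - \psi(\zeta^2)}$ (the ratio being positive since $\psi$ is increasing), so that the last display becomes exactly~\eqref{eq:suplaplace}.

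The step I expect to be the main obstacle is the first one: to make the contour deformation and the passage to the boundary value rigorous one needs uniform control of the exponential factor of $G$ along large circular arcs and near the origin, and enough regularity of $t \mapsto \pr(\tau_x > t)$ to invoke the Laplace inversion theorem; this is where the ``mild assumptions'' under which~\cite{bib:kmr11} states the result are used, while the branch-cut bookkeeping and the Poisson-kernel identity are then routine.
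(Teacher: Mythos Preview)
The present paper does not prove this statement; it is quoted as Corollary~4.2 of~\cite{bib:kmr11} and closed with \qed. However, the argument of~\cite{bib:kmr11} can be reconstructed from the proof of Lemma~\ref{lem:piovertwo} here, which reuses the same machinery, so a comparison is still possible.

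Your argument is correct and is essentially the same computation: write the Baxter--Donsker transform as $G(\xi,z)=\ph(\xi,z)/(\xi z)$ with $\ph(\xi,z)=\exp\bigl(-\tfrac1\pi\int_0^\infty \xi(\xi^2+\zeta^2)^{-1}\log(1+\psi(\zeta^2)/z)\,d\zeta\bigr)$, pass to the boundary value on the cut $(-\infty,0]$, substitute $s=\psi(\lambda^2)$, and tidy up the exponential with the Poisson-kernel identity. The only methodological difference is in how the inversion in $z$ is justified. You deform the Bromwich contour and (rightly) flag the arc estimates and the regularity of $t\mapsto g(\xi,t)$ as the delicate step. The approach of~\cite{bib:kmr11} avoids this entirely: one checks, exactly as in the proof of Lemma~\ref{lem:piovertwo}, that $\Arg\ph(\xi,z)\in(0,\pi/2)$ for $\im z>0$, so $\ph(\xi,\cdot)$ is a complete Bernstein function by Proposition~\ref{prop:cbf:prop}(b), hence $G(\xi,\cdot)=\ph(\xi,\cdot)/(\xi\,\cdot)$ is a Stieltjes function by Proposition~\ref{prop:cbf:prop}(a). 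Proposition~\ref{prop:cbf:repr}(b) then gives $G(\xi,z)=\tfrac1\pi\int_0^\infty(z+s)^{-1}\bigl(-\im G^+(\xi,-s)\bigr)\,ds$ directly (the constants $c_1,c_2$ vanish since $\ph(\xi,0^+)=0$ and $\ph(\xi,\infty)=1$), and because $(z+s)^{-1}$ is the Laplace transform of $t\mapsto e^{-st}$, this \emph{is} your inversion formula $g(\xi,t)=-\tfrac1\pi\int_0^\infty e^{-st}\im G^+(\xi,-s)\,ds$, obtained with no contour deformation. Your boundary-value bookkeeping is then identical to the formula $\ph^+(\xi,-\psi(\lambda^2))=\lambda(\lambda+i\xi)\psi_\lambda^\dagger(\xi)/(\lambda^2+\xi^2)$ quoted near the end of the proof of Lemma~\ref{lem:piovertwo}. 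So what the CBF/Stieltjes route buys is precisely the elimination of the obstacle you identify; the rest of your proof is the standard one.
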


We remark that the above result is proved in~\cite{bib:kmr11} for all symmetric L{\'e}vy processes with L{\'e}vy-Khintchine exponent $\Psi(\xi)$ having strictly positive derivative on $(0, \infty)$.

The transition semigroup $P^\hl_t$ (acting on $L^p(\hl)$ for any $p \in [1, \infty]$) of the process $X_t$ killed upon leaving the half-line $\hl$, and its $L^2(\hl)$ generator $\A_\hl$, are defined formally, for example, in~\cite{bib:mk10}. These notions are only required in the statements of Theorems~\ref{th:eigenfunctions} and~\ref{th:spectral}, and therefore they are not discussed in detail below.

\begin{theorem}[Theorem~1.1 in~\cite{bib:mk10}]
\label{th:eigenfunctions}
For every $\lambda > 0$, there is a bounded continuous function $F_\lambda$ on $\hl$ which is the eigenfunction of $P^\hl_t$, that is,
\formula{
 P^\hl_t F_\lambda(x) & = e^{-t \psi(\lambda^2)} F_\lambda(x)
}
for all $t, x > 0$. The function $F_\lambda$ is characterised by its Laplace transform:
\formula[eq:lf]{
 \laplace F_\lambda(\xi) & = \frac{\lambda}{\lambda^2 + \xi^2} \, \exp\expr{\frac{1}{\pi} \int_0^\infty \frac{\xi}{\xi^2 + \zeta^2} \, \log \frac{\psi'(\lambda^2) (\lambda^2 - \zeta^2)}{\psi(\lambda^2) - \psi(\zeta^2)} \, d\zeta}
}
for $\xi \in \C$ such that $\re \xi > 0$. Furthermore, for $x > 0$ we have
\formula[eq:f]{
 F_\lambda(x) & = \sin(\lambda x + \thet_\lambda) - G_\lambda(x) ,
}
where the \emph{phase shift} $\thet_\lambda$ belongs to $[0, \pi/2)$, and the \emph{correction term} $G_\lambda(x)$ is a bounded, completely monotone function on $(0, \infty)$. More precisely, we have
\formula[eq:theta]{
 \thet_\lambda & = -\frac{1}{\pi} \int_0^\infty \frac{\lambda}{\lambda^2 - \zeta^2} \, \log \frac{\psi'(\lambda^2) (\lambda^2 - \zeta^2)}{\psi(\lambda^2) - \psi(\zeta^2)} \, d\zeta ,
}
and $G_\lambda$ is the Laplace transform of a finite measure $\gamma_\lambda$ on $(0, \infty)$. When $\psi(\xi)$ extends to a function $\psi^+(\xi)$ holomorphic in the upper complex half-plane $\{\xi \in \C : \im \xi > 0\}$ and continuous in $\{\xi \in \C : \im \xi \ge 0$, and furthermore $\psi^+(-\xi) \ne \psi(\lambda)$ for all $\xi > 0$, then the measure $\gamma_\lambda$ is absolutely continuous, and
\formula[eq:gamma]{
\begin{aligned}
 \gamma_\lambda(d\xi) & = \frac{1}{\pi} \expr{\im \frac{\lambda \psi'(\lambda^2)}{\psi(\lambda^2) - \psi^+(-\xi^2)}} \\ & \qquad \times \exp\expr{-\frac{1}{\pi} \int_0^\infty \frac{\xi}{\xi^2 + \zeta^2} \, \log \frac{\psi'(\lambda^2) (\lambda^2 - \zeta^2)}{\psi(\lambda^2) - \psi(\zeta^2)} \, d\zeta} d\xi
\end{aligned}
}
for $\xi > 0$.\qed
\end{theorem}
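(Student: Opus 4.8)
\emph{Proof plan.} The plan is to take \eqref{eq:lf} as the \emph{definition} of $F_\lambda$, to read off the decomposition \eqref{eq:f} together with \eqref{eq:theta} and \eqref{eq:gamma} by a contour-integration argument, and only afterwards to verify the eigenfunction relation $P^\hl_t F_\lambda=e^{-t\psi(\lambda^2)}F_\lambda$ by a Wiener--Hopf identity combined with an approximation.

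First I would collect the analytic input carried by Assumption~\ref{asmp}(c): a complete Bernstein function $\psi$ is nonnegative, strictly increasing and concave on $(0,\infty)$, has completely monotone derivative, and extends to a holomorphic function on $\C\setminus(-\infty,0]$ mapping the upper half-plane into itself. From concavity and the mean value theorem one checks that $h_\lambda(\zeta):=\psi'(\lambda^2)(\lambda^2-\zeta^2)/(\psi(\lambda^2)-\psi(\zeta^2))$ extends to a continuous, strictly positive function on $[0,\infty)$ with $h_\lambda(\lambda)=1$, with $h_\lambda\le1$ on $[0,\lambda]$, $h_\lambda\ge1$ on $[\lambda,\infty)$, and with $h_\lambda(\zeta)\le\psi'(\lambda^2)/\psi'(\zeta^2)$ for $\zeta\ge\lambda$; in particular $\log h_\lambda(\zeta)/(1+\zeta^2)$ is integrable, so the integrals in \eqref{eq:lf}, \eqref{eq:theta} and \eqref{eq:gamma} converge. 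The sign pattern just recorded makes the integrand in \eqref{eq:theta} nonpositive, so $\thet_\lambda\ge0$, and a direct estimate of that integral gives $\thet_\lambda<\pi/2$.

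Next, set $\Phi_\lambda(\xi)=\exp\bigl(\tfrac1\pi\int_0^\infty\tfrac{\xi}{\xi^2+\zeta^2}\log h_\lambda(\zeta)\,d\zeta\bigr)$, so $\laplace F_\lambda(\xi)=\tfrac{\lambda}{\lambda^2+\xi^2}\Phi_\lambda(\xi)$ for $\re\xi>0$. This $\Phi_\lambda$ is holomorphic and zero-free on $\{\re\xi>0\}$, bounded there together with $\Phi_\lambda^{-1}$, and --- a point requiring care --- extends holomorphically across the imaginary axis onto $\C\setminus(-\infty,0]$ by deforming the $\zeta$-contour off $(0,\infty)$ using the holomorphic extension of $h_\lambda$ and keeping track of the residue that crosses the contour (the naive continuation of the integral would instead exhibit a spurious jump on the imaginary axis). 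Then $\laplace F_\lambda$ is holomorphic on $\C\setminus(-\infty,0]$ except for simple poles at $\xi=\pm i\lambda$ with residues $\pm\tfrac1{2i}\Phi_\lambda(\pm i\lambda)$, and the boundary value $\Phi_\lambda(\pm i\lambda)=e^{\pm i\thet_\lambda}$ of the exponential integral is exactly what produces \eqref{eq:theta}. By the Laplace inversion theorem there is a bounded continuous $F_\lambda$ on $(0,\infty)$ with this transform; pushing the Bromwich contour onto the two sides of $(-\infty,0]$, the poles at $\pm i\lambda$ contribute $\tfrac1{2i}(e^{i(\lambda x+\thet_\lambda)}-e^{-i(\lambda x+\thet_\lambda)})=\sin(\lambda x+\thet_\lambda)$, while the wrap-around integral equals $-G_\lambda(x)$ with $G_\lambda(x)=\int_0^\infty e^{-sx}\gamma_\lambda(ds)$, where $\gamma_\lambda$ has density $\tfrac1\pi$ times the imaginary part of the boundary value of $\laplace F_\lambda$ on $(-\infty,0)$. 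Under the extra holomorphy hypothesis on $\psi^+$ this density is identified as \eqref{eq:gamma}, and its nonnegativity --- hence complete monotonicity of the bounded function $G_\lambda$ --- follows from $\im\bigl(\lambda\psi'(\lambda^2)/(\psi(\lambda^2)-\psi^+(-\xi^2))\bigr)\ge0$, which holds since $\psi^+$ maps the upper half-plane into itself; $\gamma_\lambda$ is finite because $F_\lambda$ is bounded.

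It remains to establish the eigenfunction identity, which I expect to be the main obstacle. The algebraic core is the Wiener--Hopf factorization of the symbol shifted by the spectral parameter: \eqref{eq:lf} is engineered so that, extending $F_\lambda$ by $0$ to $(-\infty,0)$, the distribution $(\A+\psi(\lambda^2))F_\lambda$ is supported on $(-\infty,0]$ --- which at the level of transforms is precisely the factorization identity --- equivalently $F_\lambda$ is a weak solution of $\A_\hl F_\lambda=-\psi(\lambda^2)F_\lambda$ on $\hl$; this is checked by matching factors, using the reflection identity $\Phi_\lambda(\xi)\Phi_\lambda(-\xi)=1$ and the analytic continuation to the spectral value $-\psi(\lambda^2)$ of the classical Wiener--Hopf factorization of $q+\psi(\xi^2)$. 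Passing from this weak eigenrelation to the semigroup identity $P^\hl_t F_\lambda=e^{-t\psi(\lambda^2)}F_\lambda$ is delicate because $F_\lambda$ is merely bounded, not in $L^2(\hl)$, so $L^2$ spectral theory does not apply directly; I would approximate --- replacing $\psi$ by complete Bernstein functions $\psi_n$ for which the killed process has a Brownian component, or approximating $F_\lambda$ by resolvent-type functions of $P^\hl_t$ --- prove the identity in the regularised setting, and pass to the limit using the $L^p(\hl)$-contractivity of $P^\hl_t$, the uniform bounds on $F_\lambda$, and uniqueness of bounded solutions of $\A_\hl u=-\psi(\lambda^2)u$. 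The remaining technical work is justifying the contour shifts (control of $\Phi_\lambda$ near $0$, near $\infty$, and across the imaginary axis) and pinning down the support and absolute continuity of $\gamma_\lambda$.
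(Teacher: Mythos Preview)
This theorem is not proved in the present paper: it is quoted verbatim as Theorem~1.1 of~\cite{bib:mk10} and closed with a \qed, so there is no ``paper's own proof'' to compare against. The article only \emph{uses} the eigenfunctions $F_\lambda$, $\thet_\lambda$, $G_\lambda$, $\gamma_\lambda$ as input, reformulating them in the $\psi_\lambda$, $\psi_\lambda^\dagger$ notation (see~\eqref{eq:theta0}--\eqref{eq:gamma0}) and then deriving further properties in Sections~\ref{sec:theta}--\ref{sec:flambda}.

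That said, your outline is a reasonable summary of the strategy actually carried out in~\cite{bib:mk10}. A few remarks on where your sketch is loose relative to what is really needed there. First, the extension of $\Phi_\lambda$ across the imaginary axis is not done by ad hoc contour deformation of the defining integral; rather one recognises that $h_\lambda(\zeta)=\psi_\lambda(\zeta^2)/\psi_\lambda(\lambda^2)$ with $\psi_\lambda$ a complete Bernstein function (Proposition~\ref{prop:cbf:prop}(c)), so $\Phi_\lambda(\xi)=\psi_\lambda^\dagger(\xi)/\sqrt{\psi_\lambda(\lambda^2)}$ is itself a CBF by Lemma~\ref{lem:duality}, which immediately gives the holomorphic extension to $\C\setminus(-\infty,0]$ and the structure of the boundary measure via Proposition~\ref{prop:cbf:repr}. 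Second, the bound $\thet_\lambda<\pi/2$ is not quite a ``direct estimate'': in~\cite{bib:mk10} it comes from identifying $\thet_\lambda=\Arg\psi_\lambda^\dagger(i\lambda)$ and using that a CBF maps the upper half-plane into itself. Third, for the eigenfunction relation $P^\hl_tF_\lambda=e^{-t\psi(\lambda^2)}F_\lambda$, the approximation step you flag is indeed the crux, and in~\cite{bib:mk10} it is handled by approximating $F_\lambda$ with functions in the domain of $\A_\hl$ and using the resolvent, not by perturbing $\psi$.
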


We introduce the following two conditions:
\formula[eq:fpt:a1]{
 \sup_{\xi > 0} \frac{\xi |\psi''(\xi)|}{\psi'(\xi)} < 2 ,
}
and, given $t_0 > 0$,
\formula[eq:fpt:a2]{
 \int_1^\infty \sqrt{\frac{\psi'(\xi^2)}{\psi(\xi^2)}} \, e^{-t_0 \psi(\xi^2)} d\xi < \infty .
}

\begin{remark}
\begin{enumerate}
\item[(a)]
By Proposition~\ref{prop:cbf:ests}(b), for every complete Bernstein function $\psi$ the supremum in~\eqref{eq:fpt:a1} is not greater than $2$. Condition~\eqref{eq:fpt:a1}, is needed only to assert that $\sup_{\lambda > 0} \thet_\lambda < \pi/2$, and can be replaced by the latter.
\item[(b)]
When $\psi(\xi)$ is \emph{unbounded} (that is, $X_t$ is not a compound Poisson process) and \emph{regularly varying} of order $\ro_0$ at $0$ and $\ro_\infty$ at $\infty$, then $\ro_0, \ro_\infty \in [0, 1]$ and $\lim_{\xi \to 0^+} (\xi |\psi''(\xi)| / \psi'(\xi)) = 1 - \ro_0$, $\lim_{\xi \to \infty} (\xi |\psi''(\xi)| / \psi'(\xi)) = 1 - \ro_\infty$ (see~\cite{bib:bgt87}). Hence~\eqref{eq:fpt:a1} is automatically satisfied.
\item[(c)]
\label{rem:integrability}
Assumption~\eqref{eq:fpt:a2} is a rather mild growth condition on $\psi(\xi)$ for large $\xi$. By Proposition~\ref{prop:cbf:ests}(a), $\sqrt{\psi'(\xi^2) / \psi(\xi^2)} \le 1 / \xi$. Hence, \eqref{eq:fpt:a2} is satisfied for all $n \ge 0$ an all $t_0 > 0$ whenever $\psi(\xi) \ge c \log(\xi)$ ($\xi > 0$) for some $c > 0$. When $\psi(\xi) = f(\log (1 + \xi))$, then~\eqref{eq:fpt:a2} is equivalent to $\int_1^\infty \sqrt{f'(s) / f(s)} \, e^{-t_0 f(s)} ds < \infty$. For example, if $\psi(\xi) = \log(1 + \log(1 + \xi))$, then~\eqref{eq:fpt:a2} holds if and only if $t_0 > 1/2$.\qed
\end{enumerate}
\end{remark}

In particular, many processes frequently found in literature, including symmetric $\alpha$-stable processes, relativistic $\alpha$-stable processes and geometric $\alpha$-stable processes, satisfy~\eqref{eq:fpt:a1} and~\eqref{eq:fpt:a2}. These and some other examples are discussed in Section~\ref{sec:examples}.

The following are the main results of the article. The first of them provides a formula for $\pr(\tau_x > t)$ and its derivatives in $t$, and it was proved in~\cite{bib:mk10} under more restrictive assumptions. The full statement of the theorem (for $n = 0$ and $n = 1$) was announced in~\cite{bib:mk10} as Theorem~1.8.

\begin{theorem}
\label{th:fpt}
If~\eqref{eq:fpt:a1} and~\eqref{eq:fpt:a2} hold for some $t_0 > 0$, then for all $n \ge 0$, $t > t_0$ ($t \ge t_0$ if $n = 0$) and $x > 0$,
\formula[eq:fpt]{
\begin{aligned}
 & (-1)^n \, \frac{d^n}{dt^n} \, \pr(\tau_x > t) \\ & \qquad = \frac{2}{\pi} \int_0^\infty \sqrt{\frac{\psi'(\lambda^2)}{\psi(\lambda^2)}} \, (\psi(\lambda^2))^n e^{-t \psi(\lambda^2)} F_\lambda(x) d\lambda .
\end{aligned}
}
\end{theorem}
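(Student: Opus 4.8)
The plan is to derive \eqref{eq:fpt} by combining the two Laplace-transform identities already available to us: the Laplace transform of $\pr(\tau_x > t)$ in $x$ from Theorem~\ref{th:suplaplace}, and the Laplace transform of the eigenfunction $F_\lambda$ in Theorem~\ref{th:eigenfunctions}. First I would observe that the right-hand side of \eqref{eq:fpt} (call it $R_n(t,x)$, without the sign) is, for fixed $t > t_0$, a bounded continuous function of $x > 0$ whose Laplace transform in $x$ can be computed by Fubini: using \eqref{eq:lf} one gets
\formula{
 \laplace R_n(t, \cdot)(\xi) & = \frac{2}{\pi} \int_0^\infty \frac{\lambda}{\lambda^2 + \xi^2} \, \frac{\psi'(\lambda^2)}{\sqrt{\psi(\lambda^2)}} \, (\psi(\lambda^2))^n e^{-t \psi(\lambda^2)} \\ & \qquad \times \exp\expr{\frac{1}{\pi} \int_0^\infty \frac{\xi \log \frac{\psi'(\lambda^2)(\lambda^2 - \zeta^2)}{\psi(\lambda^2) - \psi(\zeta^2)}}{\xi^2 + \zeta^2} \, d\zeta} d\lambda ,
}
which is exactly $(-1)^n (d/dt)^n$ applied under the integral sign to the right-hand side of \eqref{eq:suplaplace}. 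So for $n = 0$ the two sides of \eqref{eq:fpt} have the same Laplace transform in $x$, and since both are continuous and bounded in $x$, injectivity of the Laplace transform gives the case $n = 0$ (for $t \ge t_0$). The cases $n \ge 1$ then follow by differentiating $\pr(\tau_x > t)$ in $t$ under the integral sign, once we know this is legitimate.

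The analytic heart of the argument is therefore the justification of the interchanges of integration and of $t$-differentiation. For the Fubini step and for differentiating in $t$, I would need a dominating function for the integrand in \eqref{eq:fpt}: the factor $(\psi(\lambda^2))^n e^{-t\psi(\lambda^2)}$ is harmless for $\lambda$ in compact sets, and for large $\lambda$ one uses $t > t_0$ to split off $e^{-(t - t_0)\psi(\lambda^2)}$, which decays, times $e^{-t_0 \psi(\lambda^2)}$; the remaining $\lambda$-integrability is precisely condition \eqref{eq:fpt:a2} after checking that $F_\lambda(x)$ and the exponential phase factor stay bounded. Here condition \eqref{eq:fpt:a1} enters: by Remark (a) it guarantees $\sup_\lambda \thet_\lambda < \pi/2$, hence (via \eqref{eq:f} and boundedness of $G_\lambda$) a uniform-in-$\lambda$ bound on $\|F_\lambda\|_\infty$, and similarly a uniform bound on the $\exp(\cdots)$ factor in the Laplace transform. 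Near $\lambda = 0$ one uses $\sqrt{\psi'(\lambda^2)/\psi(\lambda^2)} \le 1/\lambda$ (Proposition~\ref{prop:cbf:ests}(a)) to see the integrand is $O(1)$, so the integral converges at the origin.

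I expect the main obstacle to be exactly this uniform control of $F_\lambda(x)$ and of the exponential Laplace-transform factor in \eqref{eq:lf}, uniformly in $\lambda > 0$ and locally uniformly in $x$ and $t$, strong enough to support both the Fubini interchange in $x$ and repeated differentiation in $t$. Proving $\sup_{\lambda>0}\thet_\lambda < \pi/2$ from \eqref{eq:fpt:a1}, and bounding the correction term $G_\lambda$ and the integral $\frac1\pi\int_0^\infty \frac{\xi}{\xi^2+\zeta^2}\log\frac{\psi'(\lambda^2)(\lambda^2-\zeta^2)}{\psi(\lambda^2)-\psi(\zeta^2)}\,d\zeta$ uniformly, requires genuine estimates on complete Bernstein functions; I would expect these to occupy a separate section of the paper and to be quoted here. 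Once those uniform bounds are in hand, the dominated-convergence and Fubini arguments are routine, and the Laplace-transform inversion closes the proof. A final small point: for $n = 0$ one should check continuity up to $t = t_0$, which again follows from dominated convergence using \eqref{eq:fpt:a2}; for $n \ge 1$ the strict inequality $t > t_0$ is what makes the extra powers $(\psi(\lambda^2))^n$ integrable.
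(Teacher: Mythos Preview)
Your overall strategy is correct and is exactly the one the paper follows: match Laplace transforms in $x$ via Theorem~\ref{th:suplaplace} and~\eqref{eq:lf}, justify Fubini, then differentiate under the integral for $n\ge 1$. The gap is in your treatment of small $\lambda$. The claim that ``the integrand is $O(1)$ near $\lambda=0$'' does not follow from the tools you cite: Proposition~\ref{prop:cbf:ests}(a) gives $\sqrt{\psi'(\lambda^2)/\psi(\lambda^2)}\le 1/\lambda$, and the uniform bound $|F_\lambda(x)|\le 2$ (which is all one gets from $\sup_\lambda\thet_\lambda<\pi/2$ and~\eqref{eq:f}) then yields only $O(1/\lambda)$, which is \emph{not} integrable at the origin. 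In fact the integrand need not be bounded as $\lambda\to 0$; what saves the day is that $F_\lambda(x)$ itself vanishes fast enough there, and this requires a genuine estimate, not just boundedness.

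The paper supplies this via Lemma~\ref{lem:flambdaest}: for $0<\lambda x\le (\pi/2-\Theta)/2$ one has
\[
F_\lambda(x)\le c(\Theta)\,\lambda x\,\sqrt{\psi_\lambda(1/x^2)/\psi_\lambda(\lambda^2)},
\]
with $\Theta=\sup_\lambda\thet_\lambda<\pi/2$. The proof then splits the $\lambda$-integral at $\lambda_0(x)=(\pi/2-\Theta)/(2x)$. On $(0,\lambda_0)$ one combines the above with~\eqref{eq:psilambdaest3} and the substitution $z=\psi(\lambda^2)$ to bound the integral by an absolute constant $c(\Theta)$; on $(\lambda_0,\infty)$ the crude bound $|F_\lambda|\le 2$ together with~\eqref{eq:fpt:a2} gives a contribution of size $c(\psi,t_0)+2\log^+ x$. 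This $x$-dependent bound (growing only logarithmically) is what makes the double integral $\iint e^{-\xi x}|\,\cdot\,|\,d\lambda\,dx$ finite and justifies Fubini. So the ``uniform control of $F_\lambda(x)$'' you anticipate is indeed the heart of the matter, but its precise form---the extra factor of $\lambda$ when $\lambda x$ is small---is essential and is the content of Section~\ref{sec:flambda}; a mere supremum bound will not do.
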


In Theorem~4.6 in~\cite{bib:kmr11} it is proved that
\formula[eq:fptest]{
 \frac{1}{20000} \, \min\expr{1, \frac{1}{\sqrt{t \psi(1/x^2)}}} & \le \pr(\tau_x > t) \le 10 \min\expr{1, \frac{1}{\sqrt{t \psi(1/x^2)}}}
}
for \emph{all} $t, x > 0$ for a relatively wide class of symmetric L{\'e}vy processes $X_t$ (similar results for many asymmetric processes are also available in~\cite{bib:kmr11}). Our second result generalises these estimates to derivatives of $\pr(\tau_x > t)$ in $t$, for a restricted class of processes and $t$ large enough. For similar bounds with slightly different assumptions, see Lemma~\ref{lem:fptdensityest}, Corollary~\ref{cor:fptdensityest}, Remark~\ref{rem:fptdensityest} and Proposition~\ref{prop:geomstable}.

\begin{theorem}
\label{th:fptdensityest}
\begin{enumerate}
\item[(a)]
If~\eqref{eq:fpt:a1} and~\eqref{eq:fpt:a2} hold for some $t_0 > 0$, then the distribution of $\tau_x$ is \emph{ultimately completely monotone}, that is, for each fixed $n \ge 0$, $(-1)^n (d^n / dt^n) \pr(\tau_x > t) \ge 0$ for $t$ large enough.
\item[(b)]
If $\ro = \sup_{\xi > 0} (\xi |\psi''(\xi)| / \psi'(\xi)) < 1$ (cf.~\eqref{eq:fpt:a1}), then there are positive constants $c_1(n)$, $c_2(n)$, $c_3(n, \ro)$ such that
\formula[eq:fptpowerest]{
 \frac{c_1(n)}{t^{n + 1/2} \sqrt{\psi(1 / x^2)}} & \le (-1)^n \, \frac{d^n}{dt^n} \, \pr(\tau_x > t) \le \frac{c_2(n)}{t^{n + 1/2} \sqrt{\psi(1 / x^2)}}
}
for $n \ge 0$, $x > 0$ and $t \ge c_3(n, \ro) / \psi(1 / x^2)$. When $c_2(n)$ is replaced by a constant $\tilde{c}_2(n, \ro)$, then the upper bound in~\eqref{eq:fptpowerest} holds for all $t, x > 0$.
\end{enumerate}
\end{theorem}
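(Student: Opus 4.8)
The plan is to derive everything from the integral representation in Theorem~\ref{th:fpt}, using the structural information about $F_\lambda$ from Theorem~\ref{th:eigenfunctions}, so the real work is in estimating the integral
\formula{
 I_n(t, x) & = \frac{2}{\pi} \int_0^\infty \sqrt{\frac{\psi'(\lambda^2)}{\psi(\lambda^2)}} \, (\psi(\lambda^2))^n e^{-t \psi(\lambda^2)} F_\lambda(x) \, d\lambda .
}
For part (a), I would first substitute $u = \psi(\lambda^2)$ (legitimate since $\psi$ is strictly increasing on $(0,\infty)$ by non-triviality) to rewrite $I_n(t,x)$ as $\frac{2}{\pi}\int_0^\infty u^n e^{-tu} g_x(u)\,du$ for a suitable function $g_x$, so that the factor $(-1)^n(d/dt)^n$ is already built in and positivity amounts to showing the integrand is eventually of one sign. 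The point is that $F_\lambda(x) = \sin(\lambda x + \thet_\lambda) - G_\lambda(x)$ with $\thet_\lambda \in [0,\pi/2)$ bounded away from $\pi/2$ under~\eqref{eq:fpt:a1}, and $G_\lambda(x)$ bounded; near $\lambda = 0$ the phase $\thet_\lambda \to 0$ and $F_\lambda(x) \to$ a strictly positive limit (this needs a short argument: $F_0(x) > 0$, e.g. because $\sin \thet_0 - G_0(x) > 0$ from the Laplace-transform formula~\eqref{eq:lf} evaluated appropriately, or from the probabilistic fact $\pr(\tau_x > 0) = 1$). Since the weight $u^n e^{-tu}$ concentrates mass near $u = 0$ as $t \to \infty$, for fixed $x$ the contribution of the region where $F_\lambda(x)$ could be negative is exponentially small compared with the positive bulk near $\lambda = 0^+$; making this precise (split at a fixed small $\lambda_0$, bound the tail by $\sup|F_\lambda(x)| \cdot \int_{\lambda_0}^\infty \ldots$, compare with the lower bound on $\int_0^{\lambda_0}\ldots$) gives eventual nonnegativity.

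For part (b), the hypothesis $\ro < 1$ gives quantitative control: by Proposition~\ref{prop:cbf:ests} we have $\sqrt{\psi'(\lambda^2)/\psi(\lambda^2)} \le 1/\lambda$, and $\ro < 1$ forces $\thet_\lambda \le \ro \pi/2 < \pi/2$ uniformly, hence $\sin(\lambda x + \thet_\lambda)$ stays bounded below by a positive constant on a $\lambda$-range of the form $\lambda x \in [c', c'']$ (roughly $\lambda \sim 1/x$), while $G_\lambda(x)$ being completely monotone and bounded is controlled there too. The upper bound comes from $|F_\lambda(x)| \le 1 + \|G_\lambda\|_\infty \le C(\ro)$ (uniform boundedness of $G_\lambda$, which I would extract from~\eqref{eq:theta}--\eqref{eq:gamma} or cite from~\cite{bib:mk10}), plus the change of variables $u = \psi(\lambda^2)$: then
\formula{
 \abs{I_n(t,x)} & \le \frac{C(\ro)}{\pi} \int_0^\infty \frac{(\psi(\lambda^2))^n e^{-t\psi(\lambda^2)}}{\lambda} \, d\lambda \le \tilde C(\ro) \int_0^\infty u^n e^{-tu} \frac{du}{\sqrt{\psi^{-1}(u)}} \cdot (\ldots) ,
}
and using $\psi$ regularly varying / doubling consequences of $\ro < 1$ one gets $\psi(1/x^2)$ appearing with the claimed power $t^{-n-1/2}$, after noting $\int_0^\infty u^{n-1/2} e^{-tu}\,du = \Gamma(n+1/2) t^{-n-1/2}$ and that the relevant scale is $u \sim 1/t$, i.e. $\lambda \sim \psi^{-1}(1/t)^{1/2}$. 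The lower bound is obtained by restricting the integral to the single dyadic-type window $\lambda^2 \in [\psi^{-1}(a/t), \psi^{-1}(b/t)]$ for well-chosen $a < b$; on this window $\psi(\lambda^2) \asymp 1/t$, $e^{-t\psi(\lambda^2)} \asymp 1$, $F_\lambda(x) \ge$ const $> 0$ provided $t \ge c_3(n,\ro)/\psi(1/x^2)$ (which guarantees $\psi^{-1}(b/t) \le 1/x^2$ up to constants, so that $\lambda x$ lands in the good range where $\sin(\lambda x + \thet_\lambda)$ dominates $G_\lambda(x)$), and the length of the window in $\lambda$ is $\asymp 1/(t\sqrt{\psi(1/x^2)})$ by the doubling property of $\psi^{-1}$.

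The main obstacle, and the place I would spend most effort, is the pair of \emph{uniform} lower bounds on $F_\lambda(x)$ on the correct $\lambda$-window and the \emph{uniform} upper bound $\|G_\lambda\|_\infty \le C(\ro)$: one must show that in the regime $\lambda x \asymp 1$, $t\psi(\lambda^2) \asymp 1$, the oscillatory main term $\sin(\lambda x + \thet_\lambda)$ genuinely beats the completely monotone correction $G_\lambda(x)$, which requires controlling $G_\lambda(x)$ both for small $\lambda x$ (where $G_\lambda(x)$ could be close to its total mass $\gamma_\lambda((0,\infty))$) and relating that mass to $\thet_\lambda$ via~\eqref{eq:theta}. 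I expect this to hinge on a scaling/monotonicity lemma for $\psi$ under $\ro < 1$ (a Potter-type bound giving $\psi(\mu^2)/\psi(\lambda^2) \le C(\mu/\lambda)^{\delta}$ for $\mu \ge \lambda$ with some $\delta = \delta(\ro) < 2$, hence $\int_{\lambda_1}^{\infty} (\ldots) d\lambda$ tails being summable geometrically), together with the identity tying $G_\lambda(0^+)$ to $\sin\thet_\lambda$ so that $F_\lambda(x)$ is bounded away from zero for $\lambda x$ in a fixed compact subinterval of $(0,\pi)$. Everything else — the substitution $u = \psi(\lambda^2)$, the Gamma-integral evaluation, and the reduction of (a) to (b)-type estimates plus the limit $F_0(x) > 0$ — is routine once these uniform bounds are in hand.
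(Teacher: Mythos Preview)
Your split-and-balance strategy is the right shape, but three concrete claims you rely on are false, and each one blocks the argument. First, $F_\lambda(x)$ does \emph{not} tend to a strictly positive limit as $\lambda\to0^+$: in fact $F_\lambda(x)\to 0$, with $F_\lambda(x)\sim\lambda\sqrt{\psi'(\lambda^2)}\,V(x)$ (Proposition~\ref{prop:flambdalimit}). Second, the crude bound $|F_\lambda(x)|\le C(\ro)$ erases all $x$-dependence, so nothing downstream can produce the factor $1/\sqrt{\psi(1/x^2)}$; after $\sqrt{\psi'/\psi}\le1/\lambda$ and $u=\psi(\lambda^2)$ you are left with $\int_0^\infty u^{n-1}e^{-tu}\,du$, divergent for $n=0$ and only $t^{-n}$ for $n\ge1$. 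Third, your lower-bound window $\psi(\lambda^2)\in[a/t,b/t]$ forces $\lambda x\to0$ as $t\to\infty$ with $x$ fixed, so $F_\lambda(x)$ is \emph{not} bounded below by a constant there; the conditions $\lambda x\asymp1$ and $t\psi(\lambda^2)\asymp1$ coincide only at the threshold $t\asymp1/\psi(1/x^2)$, not for all larger $t$.

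The ingredient you are missing is a sharp two-sided estimate of $F_\lambda(x)$ for \emph{small} $\lambda x$ (Lemma~\ref{lem:flambdaest}): for $0<\lambda x\le(\pi/2-\Theta)/2$ one has $F_\lambda(x)\asymp\lambda x\sqrt{\psi_\lambda(1/x^2)/\psi_\lambda(\lambda^2)}$, obtained from $\laplace F_\lambda$ via Propositions~\ref{prop:lb}--\ref{prop:ub2}. The paper splits at $\lambda_0=(\pi/2-\Theta)/(2x)$; on $(0,\lambda_0)$ this estimate, after the substitution $z=\psi(\lambda^2)$, turns the integral into an incomplete gamma expression $\asymp t^{-n-1/2}/\sqrt{\psi(1/x^2)}$ for $t\psi(1/x^2)\ge1$ (Lemma~\ref{lem:fptdensityest}), while the tail over $(\lambda_0,\infty)$ is bounded by $|F_\lambda|\le2$. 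Part~(a) then follows from Corollary~\ref{cor:fptdensityest}; for part~(b) one uses $\Theta\le\ro\pi/4$ (Proposition~\ref{prop:thetaest2}) and integrates $-\psi''/\psi'\le\ro/\xi$ to get $\psi(\xi)/(\xi\psi'(\xi))\le1/(1-\ro)$, which bounds the tail term by an upper incomplete Gamma function and makes it absorbable into the main term once $t\psi(1/x^2)\ge c_3(n,\ro)$.
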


Next, we study the asymptotic behaviour of $\pr(\tau_x > t)$ as $t \to \infty$ or $x \to 0^+$. The function $V(x)$ is given by an explicit formula; it is the \emph{renewal function of the ascending ladder-height process} (see Preliminaries). The case $n = 0$ has been previously studied in~\cite{bib:gn86}. 

\begin{theorem}
\label{th:fptdensityasymp}
\begin{enumerate}
\item[(a)]
If $\psi$ is unbounded, and~\eqref{eq:fpt:a1} and~\eqref{eq:fpt:a2} hold for some $t_0 > 0$, then for all $n \ge 0$ and $x \ge 0$,
\formula[eq:fptdensityasymp]{
 \lim_{t \to \infty} \expr{t^{n + 1/2} \, \frac{d^n}{dt^n} \, \pr(\tau_x > t)} & = \frac{(-1)^n \Gamma(n + 1/2)}{\pi} \, V(x) .
}
The convergence is locally uniform in $x \in [0, \infty)$.
\item[(b)]
If $\psi$ is unbounded, regularly varying of order $\ro \in [0, 1]$ at infinity, and~\eqref{eq:fpt:a1} and~\eqref{eq:fpt:a2} hold for some $t_0 > 0$, then for all $n \ge 0$ and $t > t_0$ ($t \ge t_0$ if $n = 0$),
\formula[eq:fptdensityregular]{
 \lim_{x \to 0^+} \expr{\sqrt{\psi(1/x^2)} \, \frac{d^n}{dt^n} \, \pr(\tau_x > t)} & = \frac{(-1)^n \Gamma(n + 1/2)}{\pi \Gamma(1 + \ro)} \, \frac{1}{t^{n + 1/2}} \, .
}
The convergence is uniform in $t \in (t_0, \infty)$.
\end{enumerate}
\end{theorem}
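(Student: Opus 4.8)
The plan is to deduce both statements from the representation~\eqref{eq:fpt} of Theorem~\ref{th:fpt} by one change of variable followed by a dominated-convergence argument, the substance being precise asymptotics of the eigenfunctions. Since $\psi$ is a nonconstant complete Bernstein function with $\psi(0^+)=0$ which is here assumed unbounded, it is an increasing bijection of $\hl$ onto $\hl$; for $s,t>0$ let $\lambda=\lambda(s,t)$ be defined by $\psi(\lambda^2)=s/t$, and write $\tilde F_\lambda(x)=F_\lambda(x)/(\lambda\sqrt{\psi'(\lambda^2)})$. Substituting $s=t\psi(\lambda^2)$ in~\eqref{eq:fpt}, so that $d\lambda=ds/(2t\lambda\psi'(\lambda^2))$ and the factor $\sqrt{\psi'(\lambda^2)/\psi(\lambda^2)}$ cancels against the Jacobian, one gets after elementary simplification, for $t>t_0$ ($t\ge t_0$ if $n=0$) and $x\ge0$,
\[
 t^{n+1/2}\,(-1)^n\,\frac{d^n}{dt^n}\,\pr(\tau_x>t) \;=\; \frac1\pi\int_0^\infty s^{n-1/2}e^{-s}\,\tilde F_{\lambda(s,t)}(x)\,ds .
\]
As $\int_0^\infty s^{n-1/2}e^{-s}\,ds=\Gamma(n+1/2)$, the theorem will follow once we know that $\tilde F_{\lambda(s,t)}(x)\to V(x)$ as $t\to\infty$ (part~(a)) and that $\sqrt{\psi(1/x^2)}\,\tilde F_{\lambda(s,t)}(x)\to1/\Gamma(1+\ro)$ as $x\to0^+$ (part~(b)), in each case with enough uniformity to pass the limit under the integral.

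For the eigenfunction asymptotics I would start from~\eqref{eq:lf}. Pulling $\psi'(\lambda^2)$ out of the exponential there via $\tfrac1\pi\int_0^\infty\tfrac{\xi}{\xi^2+\zeta^2}\,d\zeta=\tfrac12$ gives $\laplace\tilde F_\lambda(\xi)=(\lambda^2+\xi^2)^{-1}\exp(\tfrac1\pi\int_0^\infty\tfrac{\xi}{\xi^2+\zeta^2}\log\tfrac{\lambda^2-\zeta^2}{\psi(\lambda^2)-\psi(\zeta^2)}\,d\zeta)$; letting $\lambda\to0^+$, using $\psi(0^+)=0$ and $\tfrac1\pi\int_0^\infty\tfrac{\xi\log\zeta^2}{\xi^2+\zeta^2}\,d\zeta=\log\xi$, this converges for each $\xi>0$ to $\tfrac1\xi\exp(-\tfrac1\pi\int_0^\infty\tfrac{\xi}{\xi^2+\zeta^2}\log\psi(\zeta^2)\,d\zeta)=\laplace V(\xi)$. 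To upgrade pointwise convergence of Laplace transforms to locally uniform convergence $\tilde F_\lambda\to V$, I would use that, by~\eqref{eq:f}, the fact that $G_\lambda$ is decreasing, and $\sup_{\mu>0}\thet_\mu<\pi/2$ (valid under~\eqref{eq:fpt:a1}), one has $F_\lambda'(x)=\lambda\cos(\lambda x+\thet_\lambda)-G_\lambda'(x)>0$ on any fixed $[0,K]$ once $\lambda$ is small, so the $\tilde F_\lambda$ are increasing and uniformly bounded there; together with continuity of $V$ this forces uniform convergence on $[0,K]$ (alternatively, analyse $\thet_\lambda/(\lambda\sqrt{\psi'(\lambda^2)})$ and $G_\lambda/(\lambda\sqrt{\psi'(\lambda^2)})$ directly from~\eqref{eq:theta} and~\eqref{eq:gamma}). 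For part~(b), feeding the regular variation of $\psi$ at infinity into~\eqref{eq:lf} as $\xi\to\infty$ gives $\laplace F_\lambda(\xi)\sim\lambda\sqrt{\psi'(\lambda^2)}/(\xi\sqrt{\psi(\xi^2)})$; since $\xi\sqrt{\psi(\xi^2)}$ is regularly varying of order $1+\ro>0$ and $F_\lambda$ is monotone near $0$ (the rest of $F_\lambda$ contributing only an exponentially small error to $\laplace F_\lambda$), Karamata's Tauberian theorem yields $F_\lambda(x)\sim\lambda\sqrt{\psi'(\lambda^2)}/(\Gamma(1+\ro)\sqrt{\psi(1/x^2)})$ as $x\to0^+$, i.e.\ $\sqrt{\psi(1/x^2)}\,\tilde F_\lambda(x)\to1/\Gamma(1+\ro)$, and keeping track of the error terms makes this uniform for $\lambda$ in compact subsets of $\hl$ and, matching against the previous limit, for $\lambda\in(0,\Lambda]$.

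With these in hand I would finish by fixing $\Lambda>0$ and splitting the $\lambda$-integral in~\eqref{eq:fpt} at $\Lambda$. On $\{\lambda>\Lambda\}$, using $|F_\lambda(x)|\le C$ uniformly (Theorem~\ref{th:eigenfunctions},~\eqref{eq:fpt:a1}) and $\psi(\lambda^2)^n e^{-(t-t_0)\psi(\lambda^2)}\le\psi(\Lambda^2)^n e^{-(t-t_0)\psi(\Lambda^2)}$ for $(t-t_0)\psi(\Lambda^2)\ge n$, assumption~\eqref{eq:fpt:a2} bounds this part of $(-1)^n(d^n/dt^n)\pr(\tau_x>t)$ by $O(\psi(\Lambda^2)^n e^{-(t-t_0)\psi(\Lambda^2)})$, so $t^{n+1/2}$ times it tends to $0$ as $t\to\infty$ (this settles~(a)); for~(b) the same tail equals $\tfrac1{\pi t^{n+1/2}}\int_{t\psi(\Lambda^2)}^\infty s^{n-1/2}e^{-s}\sqrt{\psi(1/x^2)}\,\tilde F_{\lambda(s,t)}(x)\,ds$, which the uniform bound $\sqrt{\psi(1/x^2)}\,|F_\lambda(x)|\le C'\lambda\sqrt{\psi'(\lambda^2)}$ for small $x$ controls by $\tfrac{C''}{t^{n+1/2}}\int_{t_0\psi(\Lambda^2)}^\infty s^{n-1/2}e^{-s}\,ds\le\eps/t^{n+1/2}$ uniformly in $t>t_0$, provided $\Lambda$ is large. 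On $\{\lambda\le\Lambda\}$ the substituted integrand is $\tfrac1\pi s^{n-1/2}e^{-s}\tilde F_{\lambda(s,t)}(x)$ for $s\in(0,t\psi(\Lambda^2))$, dominated by $\tfrac1\pi s^{n-1/2}e^{-s}\sup_{\lambda\le\Lambda}|\tilde F_\lambda(x)|$; in~(a) one has $\lambda(s,t)\to0$ as $t\to\infty$ for each $s$, so dominated convergence gives $\tfrac{\Gamma(n+1/2)}{\pi}V(x)$, locally uniformly in $x$; in~(b) $\lambda(s,t)$ is fixed as $x\to0$ and $\sqrt{\psi(1/x^2)}\,\tilde F_{\lambda(s,t)}(x)\to1/\Gamma(1+\ro)$ uniformly in $s,t$, giving $\tfrac{\Gamma(n+1/2)}{\pi\Gamma(1+\ro)\,t^{n+1/2}}$ uniformly in $t>t_0$. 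Letting $\Lambda\to\infty$ removes the split error, and reinstating $(-1)^n$ gives~\eqref{eq:fptdensityasymp} and~\eqref{eq:fptdensityregular}.

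I expect the main obstacle to be the eigenfunction asymptotics with the right uniformity. In part~(a), both $\thet_\lambda$ and $G_\lambda$ degenerate as $\lambda\to0$ — in fact $F_\lambda\to0$ pointwise — and $V(x)$ only emerges after dividing by $\lambda\sqrt{\psi'(\lambda^2)}$, a quantity which may tend to $0$, to a finite limit, or to $\infty$; turning pointwise convergence of Laplace transforms into locally uniform convergence of $\tilde F_\lambda$ to $V$ is precisely what the monotonicity argument (or the direct estimates from~\eqref{eq:theta} and~\eqref{eq:gamma}) is needed for. In part~(b) one needs the Karamata asymptotics of $F_\lambda(x)$ as $x\to0^+$ uniformly in $\lambda$ throughout $(0,\Lambda]$, including the crossover $\lambda\to0$ where the regime changes, which calls for quantitative (Potter-type) control of the slowly varying factors rather than the bare Tauberian statement. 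Granting these, the splitting and dominated-convergence arguments are routine.
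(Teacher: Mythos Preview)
Your approach is essentially the paper's: the substitution $s=t\psi(\lambda^2)$ reduces both parts to the behaviour of $\tilde F_\lambda(x)=F_\lambda(x)/(\lambda\sqrt{\psi'(\lambda^2)})$ (as $\lambda\to0^+$ for~(a), as $x\to0^+$ for~(b)), followed by a split of the $\lambda$-range into a main part handled by dominated convergence and a tail controlled through~\eqref{eq:fpt:a2}. The paper packages the asymptotics of $\tilde F_\lambda$ as Proposition~\ref{prop:flambdalimit} and formula~\eqref{eq:flambdaregular}; for~(a) the paper goes via $L^2$-convergence of $e^{-x}F_\lambda(x)$ plus equicontinuity (Lemma~\ref{lem:flambdacontinuity}), which is cleaner than your monotonicity-plus-continuity-theorem route but morally equivalent.

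There is one genuine gap in your part~(b) tail argument. The bound $\sqrt{\psi(1/x^2)}\,|F_\lambda(x)|\le C'\lambda\sqrt{\psi'(\lambda^2)}$ that you invoke for ``small $x$'' comes from Lemma~\ref{lem:flambdaest} and is only available when $\lambda x\le(\pi/2-\Theta)/2$; for $\lambda$ beyond $\sim 1/x$ it simply fails (and if $\psi$ is slowly varying, $\lambda\sqrt{\psi'(\lambda^2)}$ can even tend to zero). So you cannot dominate $\sqrt{\psi(1/x^2)}\,\tilde F_{\lambda(s,t)}(x)$ by a constant uniformly in $s$. The paper's fix is to split once more at the $x$-dependent level $\lambda_0(x)=(\pi/2-\Theta)/(2x)$: on $(\Lambda,\lambda_0(x))$ the Lemma~\ref{lem:flambdaest} bound gives $\sqrt{\psi(1/x^2)}\,|\tilde F_\lambda(x)|\le c(\Theta)/\sqrt{1-\psi(\lambda^2)/\psi(1/x^2)}$, which after the substitution contributes at most $c\int_{t_0\psi(\Lambda^2)}^\infty s^{n-1/2}e^{-s}\,ds$; on $(\lambda_0(x),\infty)$ one uses only $|F_\lambda(x)|\le 2$, getting $\sqrt{\psi(1/x^2)}\cdot I_n(t,\lambda_0(x))$, and Remark~\ref{rem:remainder} with $\psi(\lambda_0(x)^2)\ge k^2\psi(1/x^2)$ shows this is $O\bigl(\sqrt{\psi(1/x^2)}\,e^{-(t-t_0)k^2\psi(1/x^2)}\bigr)\to0$. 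With this second split your scheme goes through.
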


\begin{remark}
The asymptotic behaviour of $(d / dt) \pr(\tau_x > t)$ as $t \to \infty$ was completely described for (possibly asymmetric) stable L\'evy processes in~\cite{bib:ds10}, Theorem~1. In our Theorem~\ref{th:fptdensityasymp}(a), we consider a class of symmetric, but not necessarily stable L\'evy processes, and derivatives of higher order are included.

In~\cite{bib:dr11}, Theorem~3, an analogous problem is studied when one-dimensional distributions of the L\'evy process $X_t$ belong to the domain of attraction of a (possibly asymmetric) stable law. Again, this partially overlaps with Theorem~\ref{th:fptdensityasymp}(a), but neither result generalizes the other one.\qed
\end{remark}

Finally, Theorem~\ref{th:eigenfunctions} is complemented by the following completeness result. It was proved in~\cite{bib:mk10} under an extra assumption that the operator $\Pi$, defined in the statement of the theorem, is injective. Using methods developed partially in~\cite{bib:kmr11}, we show that $\Pi$ is always injective, and therefore the theorem holds in full generality. The present statement was announced in~\cite{bib:mk10} as Theorem~1.3.

\begin{theorem}
\label{th:spectral}
For $f \in C_c(\hl)$ and $\lambda > 0$, let
\formula[eq:pistar]{
 \Pi f(\lambda) & = \int_0^\infty f(x) F_\lambda(x) dx .
}
Then $\sqrt{2 / \pi} \, \Pi$ extends to a unitary operator on $L^2(\hl)$, which diagonalises the action of $P^\hl_t$:
\formula{
 \Pi P^\hl_t f(\lambda) & = e^{-t \psi(\lambda^2)} \Pi f(\lambda) && \text{for $f \in L^2(\hl)$.}
}
Furthermore, $f \in \domain(\A_\hl; L^2)$ if and only if $\psi(\lambda^2) \Pi f(\lambda)$ is in $L^2(\hl)$, and
\formula{
 \Pi \A_\hl f(\lambda) = -\psi(\lambda^2) \Pi f(\lambda) && \text{for $f \in \domain(\A_\hl; L^2)$.}
}
\end{theorem}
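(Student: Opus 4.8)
The plan is to reduce Theorem~\ref{th:spectral}, via~\cite{bib:mk10}, to the injectivity of $\Pi$, to obtain injectivity from the full Plancherel identity for $\sqrt{2/\pi}\,\Pi$, and to prove the latter by reducing it to one explicit integral identity attacked with the contour methods of~\cite{bib:kmr11}. Indeed, by~\cite{bib:mk10} the map $\Pi$ extends to a bounded operator on $L^2(\hl)$, it satisfies the intertwining relation $\Pi P^\hl_t=M_{e^{-t\psi(\cdot^2)}}\Pi$, and — granting that $\Pi$ is injective — all the assertions of Theorem~\ref{th:spectral} follow. So it suffices to show $\ker\Pi=\{0\}$, and I would establish the stronger Plancherel identity $\tfrac2\pi\int_0^\infty|\Pi f(\lambda)|^2\,d\lambda=\int_0^\infty|f(x)|^2\,dx$ for $f\in L^2(\hl)$; combined with~\cite{bib:mk10} this also makes $\sqrt{2/\pi}\,\Pi$ a unitary.

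Since $\Pi$ is bounded and the exponentials $e_\xi(x)=e^{-\xi x}$, $\xi>0$, have dense linear span in $L^2(\hl)$ (the Laplace transform being injective), it is enough to verify the polarised identity $\tfrac2\pi\scalar{\Pi e_\xi,\Pi e_\eta}=\scalar{e_\xi,e_\eta}=\tfrac1{\xi+\eta}$ for all $\xi,\eta>0$. As $F_\lambda$ is bounded and $e_\xi\in L^1(\hl)$, here $\Pi e_\xi(\lambda)$ equals the absolutely convergent integral $\int_0^\infty e^{-\xi x}F_\lambda(x)\,dx=\laplace F_\lambda(\xi)$, given by~\eqref{eq:lf}. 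Hence everything reduces to
\formula[eq:planch]{
 \frac2\pi\int_0^\infty\laplace F_\lambda(\xi)\,\laplace F_\lambda(\eta)\,d\lambda & = \frac1{\xi+\eta} , && \xi,\eta>0 .
}

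To prove~\eqref{eq:planch}, write $\laplace F_\lambda(\xi)=\tfrac{\lambda}{\lambda^2+\xi^2}\,e^{A(\lambda,\xi)}$ with $A(\lambda,\xi)$ the Wiener--Hopf integral from~\eqref{eq:lf}. Since $\psi$ is holomorphic and zero-free on $\C\setminus(-\infty,0]$ and, being a complete Bernstein function, maps the upper half-plane into itself, one checks that $\lambda\mapsto\laplace F_\lambda(\xi)$ extends from $\hl$ to a function holomorphic on the right half-plane, odd in $\lambda$, of size $O(|\lambda|^{-1})$ at infinity, with a simple pole on the boundary $i\R$ only at $\lambda=i\xi$ (the integrand of $A(\cdot,\xi)$ has a removable singularity at $\zeta=\lambda$, so no extra singularities arise). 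Thus the integrand of~\eqref{eq:planch} extends to a function holomorphic on $\{\re\lambda>0\}$, of order $O(|\lambda|^{-2})$ at infinity, with simple poles at $\lambda=i\xi$ and $\lambda=i\eta$; rotating the ray $\hl$ towards the positive imaginary axis — which carries these poles and is the branch line of $\psi(\lambda^2)$ — and invoking the Sokhotski--Plemelj formula expresses the integral as a principal-value integral along $i\hl$ plus the residue contributions at $\lambda=i\xi$ and $\lambda=i\eta$. On the imaginary axis $e^{A(\cdot,\xi)}$ must be evaluated through the boundary value $\psi^+$ of $\psi$ on $(-\infty,0)$ and the phase $\thet$ of~\eqref{eq:theta} (the harmonic conjugate of the Poisson-type integral $A(\cdot,\xi)$); using this, the normalisation $\int_0^\infty\tfrac{\xi}{\xi^2+\zeta^2}\,d\zeta=\tfrac\pi2$, and the Plemelj relations linking $A$, $\thet$ and $\psi^+$ that underlie~\eqref{eq:lf}, \eqref{eq:theta}, \eqref{eq:gamma} and the proof of Theorem~\ref{th:suplaplace} in~\cite{bib:kmr11}, the principal-value integral and the two residues simplify and cancel so as to leave exactly $\tfrac1{\xi+\eta}$. (In the Brownian case $\psi(\xi)=\xi$ one has $A\equiv0$, $\thet\equiv0$, the principal-value integral vanishes, and~\eqref{eq:planch} collapses to the elementary $\tfrac2\pi\int_0^\infty\tfrac{\lambda^2\,d\lambda}{(\lambda^2+\xi^2)(\lambda^2+\eta^2)}=\tfrac2\pi\cdot\tfrac\pi{2(\xi+\eta)}$; for general $\psi$ the same residues appear, now carrying the Wiener--Hopf factor.)

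The substance of the argument is entirely in~\eqref{eq:planch}: the analytic continuation of $\laplace F_\lambda(\xi)$ across and around the branch line $i\R$, the estimates justifying the contour rotation, the correct Plemelj treatment of the poles $i\xi,i\eta$ lying on that line, and the bookkeeping showing that the residues and the boundary integral really do combine to $\tfrac1{\xi+\eta}$. A secondary technical nuisance is the compound Poisson case (bounded $\psi$), where $\gamma_\lambda$ in Theorem~\ref{th:eigenfunctions} need not be absolutely continuous and $\laplace F_\lambda(\xi)$ decays only borderline in $\lambda$, so the convergence and deformation arguments require separate care. Alternatively, one may deduce~\eqref{eq:planch} from Theorem~\ref{th:suplaplace}, the identity $\pr(\tau_x>t)=P^\hl_t\mathbf{1}(x)$ (valid for symmetric L\'evy processes by time reversal), and the Baxter--Donsker formula~\eqref{eq:bd}, by a further Laplace transform in $x$ and an inversion in $t$; but the contour route above is the most direct.
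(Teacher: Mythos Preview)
Your reduction is exactly the paper's: invoke~\cite{bib:mk10} to reduce Theorem~\ref{th:spectral} to the injectivity of $\Pi$, then prove the full Plancherel identity on the dense set $\{e_\xi:\xi>0\}$, which amounts to the integral identity you label~\eqref{eq:planch}. Up to this point the two arguments coincide.

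The divergence---and the gap in your proposal---is in the proof of~\eqref{eq:planch}. The paper does \emph{not} rotate a contour in $\lambda$. Instead (Lemma~\ref{lem:piovertwo}) it introduces, for fixed $\xi_1,\xi_2>0$, the auxiliary function
\[
 f(z)=\ph(\xi_1,z)\,\ph(\xi_2,z),\qquad \ph(\xi,z)=\exp\!\Big(\!-\tfrac1\pi\int_0^\infty\tfrac{\xi\log(1+\psi(\zeta^2)/z)}{\xi^2+\zeta^2}\,d\zeta\Big),
\]
and observes that $f$ is a complete Bernstein function of $z$ with $f(0^+)=0$ and $f(\infty)=1$. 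The CBF representation~\eqref{eq:cbf} then forces $\tfrac1\pi\int_0^\infty \im f^+(-s)\,\tfrac{ds}{s}=1$; substituting $s=\psi(\lambda^2)$ and using the already-known boundary value $\ph^+(\xi,-\psi(\lambda^2))=\tfrac{\lambda(\lambda+\xi i)}{\lambda^2+\xi^2}\,\psi_\lambda^\dagger(\xi)$ from~\cite{bib:kmr11} gives the identity immediately. No contour rotation, no Plemelj, no principal values, and the compound-Poisson case needs no separate treatment.

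By contrast, your contour route is only outlined: the analytic continuation of $\laplace F_\lambda(\xi)$ in $\lambda$ across the line $i\R$ (where $\lambda^2<0$ lies on the branch cut of $\psi$) is asserted but not carried out, the $O(|\lambda|^{-1})$ decay of the exponential factor $e^{A(\lambda,\xi)}$ along large quarter-circles is not justified, and the decisive ``bookkeeping'' that the residues at $i\xi,i\eta$ and the boundary integral combine to $\tfrac1{\xi+\eta}$ is stated without calculation. Each of these is nontrivial---the exponential factor picks up the phase $\thet_\lambda$ and the measure $\gamma_\lambda$ on the boundary, and tracking their interaction is precisely what the paper's CBF trick sidesteps. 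Your alternative via Theorem~\ref{th:suplaplace} and $\pr(\tau_x>t)=P_t^\hl\mathbf 1(x)$ is also problematic, since $\mathbf 1\notin L^2(\hl)$ and hence sits outside the domain on which $\Pi$ and the Plancherel identity are being established.
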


\begin{remark}
In~\cite{bib:mk10}, the generalised eigenfunction expansion of Theorem~\ref{th:spectral} was the key step in the proof of (a restricted version of) Theorem~\ref{th:fpt}. Here, the proofs of Theorems~\ref{th:fpt} and~\ref{th:spectral} are independent.\qed
\end{remark}

We conclude the introduction with a brief description of the structure of the article. In Preliminaries, we recall the notion of complete Bernstein and Stieltjes functions, their properties and a Wiener-Hopf type transformation $\psi \mapsto \psi^\dagger$. Some new ideas are developed here, e.g. a type of continuity of the mapping $\psi \mapsto \psi^\dagger$ is proved. We also give some simple estimates related to Laplace transforms of monotone functions, and introduce the renewal function $V(x)$ of the ascending ladder-height process. In Section~\ref{sec:pi2}, we prove Theorem~\ref{th:spectral}. Next two sections contain estimates and properties of $\thet_\lambda$ and $F_\lambda(x)$, respectively (see Theorem~\ref{th:eigenfunctions}), which are essential to the derivation of the main results. In Section~\ref{sec:supcbf}, we prove Theorems~\ref{th:fpt}--\ref{th:fptdensityasymp}. Finally, some examples are studied in Section~\ref{sec:examples}. 

%
%                            ---------- o ----------
%

\section{Preliminaries}
\label{sec:pre}

\subsection{Complete Bernstein functions}
\label{sec:cbf}

A function $f(z)$ is said to be a \emph{complete Bernstein function} (CBF in short) if
\formula[eq:cbf]{
 f(z) & = c_1 + c_2 z + \frac{1}{\pi} \int_{0+}^\infty \frac{z}{z + s} \, \frac{m(ds)}{s} \, ,
}
where $c_1, c_2 \ge 0$, and $m$ is a Radon measure on $(0, \infty)$ such that $\int \min(s^{-1}, s^{-2}) m(ds) < \infty$. A function $g(z)$ is said to be a \emph{Stieltjes function} if
\formula[eq:stieltjes]{
 g(z) & = \frac{c_1}{z} + c_2 + \frac{1}{\pi} \int_{0+}^\infty \frac{1}{z + s} \, \tilde{m}(ds) ,
}
where $c_1, c_2 \ge 0$, and $\tilde{m}$ is a Radon measure on $(0, \infty)$ such that $\int \min(1, s^{-1}) \tilde{m}(ds) < \infty$.

Complete Bernstein and Stieltjes functions are often defined on $(0, \infty)$. However, \eqref{eq:cbf} and~\eqref{eq:stieltjes} define holomorphic functions on $\C \setminus (-\infty, 0]$. We always identify functions on $(0, \infty)$ with their holomorphic extensions to $\C \setminus (-\infty, 0]$.

We list some basic properties of complete Bernstein and Stieltjes functions.

\begin{proposition}[Proposition~2.18 in~\cite{bib:mk10}, and Corollary~6.3 in~\cite{bib:ssv10}]
\label{prop:cbf:repr}
\mbox{}\par
\begin{enumerate}
\item
Let $f$ be a complete Bernstein function with representation~\eqref{eq:cbf}. Then
\formula[eq:cbf:constants]{
 c_1 & = \lim_{z \to 0^+} f(z) , & c_2 & = \lim_{z \to \infty} \frac{f(z)}{z} \, ,
}
and
\formula[eq:jump]{
 m(ds) & = \lim_{\eps \to 0^+} (\im f(-s + i \eps) ds) ,
}
with the limit understood in the sense of weak convergence of measures.
\item
Let $g$ be a Stieltjes function with representation~\eqref{eq:stieltjes}. Then
\formula[eq:stieltjes:constants]{
 c_1 & = \lim_{z \to 0^+} (z g(z)) , & c_2 & = \lim_{z \to \infty} g(z) ,
}
and
\formula[eq:jumpstieltjes]{
 \tilde{m}(ds) + \pi c_1 \delta_0(ds) & = \lim_{\eps \to 0^+} (-\im g(-s + i \eps) ds) ,
}
with the limit understood in the sense of weak convergence of measures.\qed
\end{enumerate}
\end{proposition}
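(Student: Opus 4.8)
The plan is to read off each of the six assertions directly from the integral representations~\eqref{eq:cbf} and~\eqref{eq:stieltjes}: the limits giving $c_1$ and $c_2$ are dominated-convergence statements, while the two formulae recovering $m$ and $\tilde{m}$ are instances of the Stieltjes--Perron inversion formula, i.e.\ of the fact that Poisson averages of a positive Radon measure converge vaguely back to that measure.

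For the constants I would argue as follows. In the complete Bernstein case~\eqref{eq:cbf}, first observe that the moment condition $\int \min(s^{-1},s^{-2})\, m(ds) < \infty$ makes $s^{-1} m(ds)$ a \emph{finite} measure (on $(0,1)$ one has $s^{-1} \le s^{-2}$; on $(1,\infty)$ the factor $s^{-1}$ is already integrated). Since $\frac{z}{z+s} \le 1$ and $\frac{z}{z+s} \to 0$ pointwise as $z \to 0^+$, dominated convergence kills the integral in~\eqref{eq:cbf} while $c_2 z \to 0$, giving $c_1 = \lim_{z\to0^+} f(z)$; dividing~\eqref{eq:cbf} by $z$ and using that for $z \ge 1$ the integrand $\frac1{(z+s)s}$ is bounded by $\min(s^{-1},s^{-2})$ and tends to $0$, dominated convergence gives $c_2 = \lim_{z\to\infty} f(z)/z$. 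In the Stieltjes case~\eqref{eq:stieltjes} the bookkeeping is symmetric: $\min(1,s^{-1})\tilde{m}(ds)$ is finite; from $z g(z) = c_1 + c_2 z + \frac1\pi \int_{0+}^\infty \frac{z}{z+s}\, \tilde{m}(ds)$, using $\frac{z}{z+s} \le \min(1, zs^{-1}) \le \min(1, s^{-1})$ for $z \le 1$, I get $c_1 = \lim_{z\to0^+}(z g(z))$; and from~\eqref{eq:stieltjes} itself, using $\frac1{z+s} \le \min(1,s^{-1})$ for $z \ge 1$, I get $c_2 = \lim_{z\to\infty} g(z)$. (These two Stieltjes formulae also follow from the two complete Bernstein ones applied to $z \mapsto z g(z)$, which is a complete Bernstein function with data $c_1$, $c_2$, $m(ds) = s\,\tilde{m}(ds)$.)

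For the measures I would compute the boundary values on the cut. Take $f$ as in~\eqref{eq:cbf} and put $z = -s + i\eps$ with $s, \eps > 0$: the constant $c_1$ contributes nothing to $\im f$, $c_2 z$ contributes $c_2 \eps$, and writing $\frac{z}{z+\sigma} = 1 - \frac{\sigma}{z+\sigma}$ gives $\im \frac{z}{z+\sigma} = \frac{\sigma \eps}{(\sigma-s)^2+\eps^2}$, hence
\[
 \im f(-s + i\eps) \;=\; c_2 \eps \;+\; \frac1\pi \int_{0+}^\infty \frac{\eps}{(\sigma-s)^2+\eps^2}\, m(d\sigma) \;=\; c_2 \eps + (P_\eps * m)(s),
\]
where $P_\eps(u) = \frac1\pi \frac{\eps}{u^2+\eps^2}$ and $m$ is regarded as a measure on $\R$ carried by $(0,\infty)$ (the integral is finite for each $s,\eps$ by the moment condition). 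Since $P_\eps$ is an approximate identity, I then want $(P_\eps * m)(s)\, ds \to m$ vaguely on $(0,\infty)$: pairing with $\phi \in C_c(0,\infty)$ and applying Fubini rewrites the pairing as $\int_{0+}^\infty (P_\eps * \phi)(\sigma)\, m(d\sigma)$, which I split over a compact neighbourhood of $\supp \phi$ --- where $P_\eps * \phi \to \phi$ uniformly and $m$ is finite --- and over its complement near $0$ and near $\infty$ --- where $P_\eps * \phi = O(\eps)$ pointwise and the two $m$-tails are controlled by $\int_0^1 m(ds) < \infty$ and $\int_1^\infty s^{-2} m(ds) < \infty$. This delivers $m(ds) = \lim_{\eps\to0^+}(\im f(-s+i\eps)\, ds)$. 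For the Stieltjes case the computation is the same, with the single new feature that $\frac{c_1}{z}$ now contributes $-\im \frac{c_1}{-s+i\eps} = \frac{c_1 \eps}{s^2+\eps^2} = \pi c_1 P_\eps(s)$; so $-\im g(-s+i\eps) = \pi c_1 P_\eps(s) + (P_\eps * \tilde{m})(s)$, and testing against $\phi \in C_c[0,\infty)$ (so that $\int \phi(s)\, \pi c_1 P_\eps(s)\, ds \to \pi c_1 \phi(0)$) yields $\tilde{m}(ds) + \pi c_1 \delta_0(ds) = \lim_{\eps\to0^+}(-\im g(-s+i\eps)\, ds)$.

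The hard part will be the last step --- the vague convergence of the Poisson averages --- where one must rule out mass of $P_\eps * m$ (resp.\ $P_\eps * \tilde{m}$) escaping to $0$ or to $\infty$; the moment conditions are precisely what is needed for this, and they also account for the asymmetry between the two parts: in~\eqref{eq:cbf} the number $c_1$ enters as a constant and therefore leaves no trace at the origin in part~(1), whereas in~\eqref{eq:stieltjes} it enters as $c_1/z$ and produces the atom $\pi c_1 \delta_0$ in part~(2). Everything else is routine dominated convergence once the integrability bookkeeping coming from $\int \min(s^{-1},s^{-2})\, m(ds) < \infty$ and $\int \min(1,s^{-1})\, \tilde{m}(ds) < \infty$ has been recorded.
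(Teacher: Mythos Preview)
The paper does not prove this proposition; it is quoted from Proposition~2.18 of~\cite{bib:mk10} and Corollary~6.3 of~\cite{bib:ssv10}, and the statement is closed with a \qed. Your direct argument via dominated convergence and Stieltjes--Perron inversion is the standard route and is essentially sound, but two slips deserve attention.

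First, the claim that the moment condition $\int \min(s^{-1},s^{-2})\,m(ds)<\infty$ makes $s^{-1}m(ds)$ a \emph{finite} measure is false: take $m(ds)=\ind_{(1,\infty)}(s)\,ds$, which satisfies the moment condition while $\int_1^\infty s^{-1}\,ds=\infty$. The moment condition gives exactly $\int_0^1 s^{-1}\,m(ds)<\infty$ and $\int_1^\infty s^{-2}\,m(ds)<\infty$, no more. Your dominated convergence for $c_1=\lim_{z\to 0^+}f(z)$ still goes through, but with the correct majorant: for $0<z\le 1$ one has
\[
\frac{z}{(z+s)\,s}\;\le\;\frac{1}{s}\,\min\!\Bigl(1,\tfrac{z}{s}\Bigr)\;\le\;\min(s^{-1},s^{-2}),
\]
which is $m$-integrable by hypothesis. (You already use exactly this bound for $c_2$; just use it again here.)

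Second, if you test $\pi c_1 P_\eps(s)\,ds$ against $\phi\in C_c[0,\infty)$ you obtain $\tfrac{\pi c_1}{2}\,\phi(0)$, not $\pi c_1\,\phi(0)$, because only half of the mass of $P_\eps$ lies on $(0,\infty)$. For the stated coefficient $\pi c_1$ to appear, the weak limit should be read on $\R$: extend $-\im g(-s+i\eps)$ to $s\in\R$ (for $s<0$ the point $-s+i\eps$ lies in the domain of holomorphy and the imaginary part tends to $0$), and test against $\phi\in C_c(\R)$, so that $\int_\R \phi(s)\,\pi c_1 P_\eps(s)\,ds\to\pi c_1\,\phi(0)$. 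With these two repairs your proof is complete.
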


\begin{proposition}[see~\cite{bib:ssv10}]
\label{prop:cbf:prop}
Suppose that $f$ and $g$ are not constantly equal to $0$.
\begin{enumerate}
\item[(a)] the following conditions are equivalent: $f(z)$ is CBF, $z / f(z)$ is CBF, $f(z) / z$ is Stieltjes, $1 / f(z)$ is Stieltjes;
\item[(b)] $f(z)$ is CBF if and only if $f(z) \ge 0$ for $z \ge 0$, and $f$ extends to a holomorphic function in $\C \setminus (-\infty, 0]$ such that $\im f(z) > 0$ when $\im z > 0$;
\item[(c)] if $f$, $g$ are CBF and $a, c > 0$, then also $f(z) + g(z)$, $c f(z)$, $f(c z)$, $(z - a) / (f(z) - f(a))$ (extended continuously at $z = a$) and $f(g(z))$ are CBF.
\qed
\end{enumerate}
\end{proposition}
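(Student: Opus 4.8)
The plan is to deduce all three parts from the integral representations \eqref{eq:cbf} and \eqref{eq:stieltjes} together with one piece of genuine complex analysis: the complex-analytic characterisation in part~(b) (an avatar of the Nevanlinna--Pick representation of holomorphic self-maps of the upper half-plane) and its Stieltjes counterpart. I would prove (b) first. The easy direction is by inspection: if $f$ is given by \eqref{eq:cbf} it is holomorphic on $\C \setminus (-\infty, 0]$ and $\ge 0$ on $(0, \infty)$, and for $\im z > 0$ one has $\im \frac{z}{z + s} = \frac{s \, \im z}{\abs{z + s}^2} > 0$ and $\im(c_2 z) = c_2 \im z \ge 0$, so $\im f(z) \ge 0$ --- strictly unless $c_2 = 0$ and $m \equiv 0$, that is, unless $f$ is a positive constant, the degenerate case in which the strict inequality in the statement must be read non-strictly. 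The substantive direction is the converse, and here I would invoke the Nevanlinna representation of a function holomorphic on $\set{z \in \C : \im z > 0}$ with nonnegative imaginary part, namely $\alpha + \beta z + \frac{1}{\pi} \int_\R \expr{\frac{1}{s - z} - \frac{s}{1 + s^2}} \sigma(ds)$ with $\alpha \in \R$, $\beta \ge 0$ and $\sigma \ge 0$ satisfying $\int (1 + s^2)^{-1} \sigma(ds) < \infty$; since $f$ continues holomorphically across $(0, \infty)$ and is real there, Stieltjes inversion forces $\supp \sigma \subseteq (-\infty, 0]$, and the substitution $s \mapsto -s$, together with $f \ge 0$ on $(0, \infty)$ (which rules out an atom of $\sigma$ at the origin and fixes the signs of the remaining parameters), rewrites the formula as \eqref{eq:cbf}, the integrability $\int (1 + s^2)^{-1} \sigma(ds) < \infty$ becoming $\int \min(s^{-1}, s^{-2}) m(ds) < \infty$. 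The same argument applied to $-g$ yields the Stieltjes analogue: a nonzero $g$ is Stieltjes if and only if it is holomorphic on $\C \setminus (-\infty, 0]$, nonnegative on $(0, \infty)$, and $\im g(z) \le 0$ for $\im z > 0$. I expect this Nevanlinna step to be the only real obstacle; everything else is bookkeeping.

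For part~(a), two of the equivalences are purely algebraic: dividing \eqref{eq:cbf} by $z$ turns it into \eqref{eq:stieltjes} with $\tilde m(ds) = m(ds)/s$, and since $\int \min(1, s^{-1}) \tilde m(ds) = \int \min(s^{-1}, s^{-2}) m(ds)$ the integrability conditions correspond, so $f$ is CBF if and only if $f(z)/z$ is Stieltjes; applying this with $h(z) = z/f(z)$ in place of $f$ then shows that $z/f(z)$ is CBF if and only if $1/f(z)$ is Stieltjes. It remains to connect the condition that $f$ is CBF with the condition that $1/f$ is Stieltjes, and this I would get from (b) and its Stieltjes analogue: a nonzero CBF $f$ is $> 0$ on $(0, \infty)$ and has strictly positive imaginary part on the upper half-plane (unless it is a positive constant), hence is zero-free on all of $\C \setminus (-\infty, 0]$; then $1/f$ is holomorphic there, is $> 0$ on $(0, \infty)$, and $\im \frac{1}{f(z)} = -\frac{\im f(z)}{\abs{f(z)}^2} \le 0$ for $\im z > 0$, so $1/f$ is Stieltjes, and conversely. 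Chaining the three links gives the equivalence of all four conditions.

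For part~(c), closure under $f + g$, $c f$ and $f(c z)$ is immediate from \eqref{eq:cbf} (add the two representations; scale $c_1, c_2$ and $m$; substitute $s \mapsto s/c$). For $(z - a)/(f(z) - f(a))$ with $a > 0$ --- here implicitly $f$ is non-constant, so $f' > 0$ on $(0, \infty)$ and the quotient extends by $1/f'(a)$ at $z = a$ --- the elementary identity $\frac{1}{z - a} \expr{\frac{z}{z + s} - \frac{a}{a + s}} = \frac{s}{(z + s)(a + s)}$ turns \eqref{eq:cbf} into $\frac{f(z) - f(a)}{z - a} = c_2 + \frac{1}{\pi} \int \frac{1}{z + s} \, \frac{m(ds)}{a + s}$, which is a Stieltjes function (the required integrability $\int \min(1, s^{-1}) (a + s)^{-1} m(ds) < \infty$ follows from $\int \min(s^{-1}, s^{-2}) m(ds) < \infty$); hence $(z - a)/(f(z) - f(a))$ is CBF by part~(a). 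Finally, for $f \circ g$ I would use (b) once more: a nonzero CBF is either a positive constant or maps the open upper half-plane strictly into itself (and, by reflection, the open lower half-plane into itself) while being $> 0$ on $(0, \infty)$, so $g$ maps $\C \setminus (-\infty, 0]$ into itself; therefore $f \circ g$ is holomorphic on $\C \setminus (-\infty, 0]$, it is $> 0$ on $(0, \infty)$ since $g > 0$ there and $f > 0$ on $(0, \infty)$, and for $\im z > 0$ either $g(z)$ lies in the open upper half-plane --- so $\im f(g(z)) \ge 0$ by (b) --- or $g$ is a positive constant and $f(g(z))$ is real; by (b), $f \circ g$ is a complete Bernstein function.
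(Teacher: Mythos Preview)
The paper does not give its own proof of this proposition: it is quoted from~\cite{bib:ssv10} (Schilling--Song--Vondra\v{c}ek, \emph{Bernstein Functions}) and stated with a terminal \qed\ and no argument. Your proof is correct and is essentially the standard route taken in that reference: part~(b) via the Nevanlinna--Pick (Herglotz) representation of holomorphic self-maps of the upper half-plane, with Stieltjes inversion pushing the representing measure onto $(-\infty,0]$; part~(a) by the algebraic observation that dividing~\eqref{eq:cbf} by $z$ yields~\eqref{eq:stieltjes}, combined with the complex-analytic characterisation to pass to reciprocals; and part~(c) by direct manipulation of the integral representation for the linear operations and the difference quotient, and by~(b) for the composition. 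Your remark that the strict inequality in~(b) must be read non-strictly for positive constants is well taken; the statement as printed is slightly loose on that edge case.
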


\begin{proposition}[Proposition~2.21 in~\cite{bib:mk10}]
\label{prop:cbf:ests}
If $f$ is a complete Bernstein function, then
\begin{enumerate}
\item[(a)] $0 \le z f'(z) \le f(z)$ for $z > 0$;
\item[(b)] $0 \le -z f''(z) \le 2 f'(z)$ for $z > 0$;
\item[(c)] $|f(z)| \le (\sin(\eps/2))^{-1} \, f(|z|)$ for $z \in \C$, $|\Arg z| \le \pi - \eps$, $\eps \in (0, \pi)$;
\item[(d)] $|z f'(z)| \le (\sin(\eps/2))^{-1} \, f(|z|)$ for $z$ as in~(c);
\item[(e)] $|f(z)| \le c(f, \eps) (1 + |z|)$ for $z$ as in~(c).\qed
\end{enumerate}
\end{proposition}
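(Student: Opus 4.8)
The plan is to derive all five estimates from the integral representation~\eqref{eq:cbf}. Writing $\nu(ds) = \tfrac{1}{\pi} s^{-1} m(ds)$, so that $\int_0^\infty \min(1, s^{-1}) \nu(ds) < \infty$ and $f(z) = c_1 + c_2 z + \int_0^\infty \tfrac{z}{z + s} \, \nu(ds)$ on $\C \setminus (-\infty, 0]$, everything reduces to a termwise comparison of the kernel $\tfrac{z}{z+s}$ and its derivatives. For (a) and (b) I would first differentiate under the integral sign (legitimate since, for $z$ in a compact subset of $(0, \infty)$, the kernels $\tfrac{s}{(z+s)^2}$, $\tfrac{2s}{(z+s)^3}$ are dominated by a $\nu$-integrable function), obtaining $f'(z) = c_2 + \int_0^\infty \tfrac{s}{(z+s)^2} \nu(ds) \ge 0$ and $f''(z) = -\int_0^\infty \tfrac{2s}{(z+s)^3} \nu(ds) \le 0$ for $z > 0$, so that nonnegativity of $z f'(z)$ and of $-z f''(z)$ is immediate. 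For the upper bounds, since $0 \le \tfrac{s}{z+s} \le 1$ and $0 \le \tfrac{z}{z+s} \le 1$ one has $\tfrac{zs}{(z+s)^2} = \tfrac{s}{z+s} \cdot \tfrac{z}{z+s} \le \tfrac{z}{z+s}$ and $\tfrac{2zs}{(z+s)^3} = \tfrac{z}{z+s} \cdot \tfrac{2s}{(z+s)^2} \le \tfrac{2s}{(z+s)^2}$; integrating and adding the nonnegative $c_1$, $c_2$ contributions gives $z f'(z) \le f(z)$ and $-z f''(z) \le 2 f'(z)$.

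For (c)--(e) the geometric heart of the matter is the inequality: if $|\Arg z| \le \pi - \eps$ then $|z + s| \ge (|z| + s) \sin(\eps/2)$ for every $s \ge 0$. I would prove it by writing $|z + s|^2 = (|z| + s)^2 - 4 |z| s \sin^2(\tfrac{\Arg z}{2})$, using $4 |z| s \le (|z| + s)^2$ to get $|z + s|^2 \ge (|z| + s)^2 \cos^2(\tfrac{\Arg z}{2})$, and noting $\cos(\tfrac{\Arg z}{2}) \ge \cos(\tfrac{\pi - \eps}{2}) = \sin(\eps/2)$. Hence $\bigl| \tfrac{z}{z+s} \bigr| \le \tfrac{1}{\sin(\eps/2)} \cdot \tfrac{|z|}{|z|+s}$ and $\bigl| \tfrac{s}{z+s} \bigr| \le \tfrac{1}{\sin(\eps/2)}$ (the latter since $s \le |z| + s$). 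Inserting the first bound into the representation and using $\sin(\eps/2) \le 1$ to dominate the $c_1$ and $c_2 |z|$ terms as well yields $|f(z)| \le \tfrac{1}{\sin(\eps/2)} f(|z|)$, which is (c). For (d) I would write $z f'(z) = c_2 z + \int_0^\infty \tfrac{z}{z+s} \cdot \tfrac{s}{z+s} \, \nu(ds)$ and apply both factor bounds above (with $\tfrac{|z|}{|z|+s} \le 1$), bounding $|z f'(z)|$ by a constant multiple of $f(|z|)$ with the stated power of $\sin(\eps/2)$. For (e) I would use $\tfrac{|z|}{|z|+s} \le \min(1, |z|/s)$ and split $\int_0^\infty = \int_{(0,1]} + \int_{(1,\infty)}$: finiteness of $\int_{(0,1]} \nu(ds)$ and of $\int_{(1,\infty)} s^{-1} \nu(ds)$ bounds $\int_0^\infty \tfrac{|z|}{|z|+s} \nu(ds)$ by $C_1 + C_2 |z|$, and combining with the $c_1$, $c_2$ terms (and (c)) gives $|f(z)| \le c(f, \eps)(1 + |z|)$.

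The genuinely routine parts are the termwise kernel comparisons; the points that need a little care are the justification of differentiation under the integral in (a)--(b) and, above all, the sector estimate $|z + s| \ge (|z| + s) \sin(\eps/2)$, which is the one new inequality and from which (c)--(e) follow mechanically. An alternative route to (c)--(e) would invoke general Pick/Herglotz growth bounds, $f$ being holomorphic with nonnegative imaginary part on the upper half-plane (Proposition~\ref{prop:cbf:prop}(b)), but the direct computation above is shorter and gives the constants explicitly.
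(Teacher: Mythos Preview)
The paper does not give its own proof of this proposition; the statement is quoted (and closed with \qed) as Proposition~2.21 of~\cite{bib:mk10}, so there is no argument here to compare yours against. Your direct approach via the integral representation is the natural one and is essentially correct for parts~(a), (b), (c) and~(e): the differentiation under the integral sign is justified as you indicate, the termwise kernel inequalities are immediate, and the sector estimate $|z+s|\ge(|z|+s)\sin(\eps/2)$ is exactly the right lemma for~(c) and~(e).

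One point deserves care in~(d). Applying your two factor bounds $\bigl|\tfrac{z}{z+s}\bigr|\le(\sin(\eps/2))^{-1}\tfrac{|z|}{|z|+s}$ and $\bigl|\tfrac{s}{z+s}\bigr|\le(\sin(\eps/2))^{-1}$ yields
\[
|z f'(z)|\;\le\;c_2|z|+(\sin(\eps/2))^{-2}\int_0^\infty\frac{|z|}{|z|+s}\,\nu(ds)\;\le\;(\sin(\eps/2))^{-2}\,f(|z|),
\]
with exponent $-2$, not the $-1$ printed in the statement. This is not a defect of your argument: the exponent $-1$ is in fact unattainable. For $f(z)=z/(z+1)$ and $z=e^{i(\pi-\eps)}$ one has $|z+1|^2=4\sin^2(\eps/2)$, hence $|zf'(z)|/f(|z|)=(2\sin^2(\eps/2))^{-1}$, which exceeds $(\sin(\eps/2))^{-1}$ for every $\eps<\pi/3$. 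So the constant in~(d) as reproduced here appears to be a transcription slip; your method gives the correct bound $(\sin(\eps/2))^{-2}$, and since the only use of~(d) in the present paper (proof of Proposition~\ref{prop:daggercontinuity}) needs merely some $\eps$-dependent constant, nothing downstream is affected.
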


Throughout the article, $\psi$ usually denotes the complete Bernstein function in the L\'evy-Khintchine exponent of $X_t$. Some preliminary results and definitions, however, are valid for more general functions $\psi$. If this is the case, we explicitly state all assumptions on $\psi$ each time it is mentioned.

Following~\cite{bib:mk10, bib:kmr11}, we define
\formula[eq:dagger]{
 \psi^\dagger(\xi) & = \exp\expr{\frac{1}{\pi} \int_0^\infty \frac{\xi \log \psi(\zeta^2)}{\xi^2 + \zeta^2} \, d\zeta}
}
for any positive function $\psi$ for which the integral converges. In this case, $\psi^\dagger(\xi)$ is defined at least when $\re \xi > 0$. By a simple substitution, for $\xi > 0$,
\formula[eq:dagger1]{
 \psi^\dagger(\xi) & = \exp\expr{\frac{1}{\pi} \int_0^\infty \frac{\log \psi(\xi^2 \zeta^2)}{1 + \zeta^2} \, d\zeta} .
}

\begin{proposition}[Proposition~2.1 in~\cite{bib:kmr11}]
\label{prop:daggerest}
If $\psi(\xi)$ is a nonnegative function on $(0, \infty)$ and both $\psi(\xi)$ and $\xi / \psi(\xi)$ are increasing on $(0, \infty)$, then
\formula[eq:daggerest]{
 e^{-2 \catalan / \pi} \sqrt{\psi(\xi^2)} & \le \psi^\dagger(\xi) \le e^{2 \catalan / \pi} \sqrt{\psi(\xi^2)} ,
}
where $\catalan \approx 0.916$ is the Catalan constant. Note that $e^{2 \catalan / \pi} \le 2$.

If, in addition, $\psi(\xi)$ is regularly varying at $\infty$, then 
\formula[eq:regvar]{
 \lim_{\xi \to \infty} \frac{\psi^\dagger(\xi)}{\sqrt{\psi(\xi^2)}} & = 1 .
}
An analogous statement for $\xi \to 0$ holds for $\psi(\xi)$ regularly varying at $0$.

In particular, \eqref{eq:daggerest} holds for any CBF. Likewise,~\eqref{eq:regvar} holds for any regularly varying CBF.
\qed
\end{proposition}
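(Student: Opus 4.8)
The plan is to reduce the two-sided bound to a single elementary inequality for the ratio $\psi(\xi^2 \zeta^2) / \psi(\xi^2)$. Starting from the substituted form~\eqref{eq:dagger1} and subtracting the trivial identity $\sqrt{\psi(\xi^2)} = \exp\expr{\frac{1}{\pi} \int_0^\infty \frac{\log \psi(\xi^2)}{1 + \zeta^2} \, d\zeta}$, which only uses $\frac{1}{\pi} \int_0^\infty (1 + \zeta^2)^{-1} d\zeta = \frac{1}{2}$, one obtains for $\xi > 0$
\[
 \log \frac{\psi^\dagger(\xi)}{\sqrt{\psi(\xi^2)}} = \frac{1}{\pi} \int_0^\infty \frac{1}{1 + \zeta^2} \, \log \frac{\psi(\xi^2 \zeta^2)}{\psi(\xi^2)} \, d\zeta ,
\]
provided the integral converges absolutely; this convergence --- and hence the very definition of $\psi^\dagger$ under the present hypotheses --- will drop out of the bounds below.

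Next I would record the key inequality. Since $\psi$ is increasing, $\psi(\xi^2 \zeta^2) \ge \psi(\xi^2)$ for $\zeta \ge 1$ and $\psi(\xi^2 \zeta^2) \le \psi(\xi^2)$ for $0 < \zeta \le 1$; since $\xi / \psi(\xi)$ is increasing, $\psi(\xi^2 \zeta^2) \le \zeta^2 \psi(\xi^2)$ for $\zeta \ge 1$ and $\psi(\xi^2 \zeta^2) \ge \zeta^2 \psi(\xi^2)$ for $0 < \zeta \le 1$. Consequently $0 \le \log(\psi(\xi^2 \zeta^2) / \psi(\xi^2)) \le 2 \log \zeta$ for $\zeta \ge 1$, and $2 \log \zeta \le \log(\psi(\xi^2 \zeta^2) / \psi(\xi^2)) \le 0$ for $0 < \zeta \le 1$; in particular the integrand above is dominated by the integrable function $2 |\log \zeta| / (1 + \zeta^2)$. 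The point is then to split the integral at $\zeta = 1$ and use the \emph{one-sided} bound on each half: on $(0, 1)$ the integrand is $\le 0$, and on $(1, \infty)$ it is $\le 2 \log \zeta / (1 + \zeta^2)$, so
\[
 -\frac{1}{\pi} \int_0^1 \frac{-2 \log \zeta}{1 + \zeta^2} \, d\zeta \le \log \frac{\psi^\dagger(\xi)}{\sqrt{\psi(\xi^2)}} \le \frac{1}{\pi} \int_1^\infty \frac{2 \log \zeta}{1 + \zeta^2} \, d\zeta .
\]
Both outer integrals equal $2 \catalan / \pi$, by the standard representation $\catalan = \int_0^1 (-\log \zeta)(1 + \zeta^2)^{-1} d\zeta$ together with the substitution $\zeta \mapsto 1/\zeta$, and~\eqref{eq:daggerest} follows.

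For the regularly varying case, fix $\zeta > 0$: if $\psi$ is regularly varying of index $\ro$ at $\infty$, then $\psi(\xi^2 \zeta^2) / \psi(\xi^2) \to \zeta^{2\ro}$ as $\xi \to \infty$, so $\log(\psi(\xi^2 \zeta^2) / \psi(\xi^2)) \to 2 \ro \log \zeta$ pointwise in $\zeta$. The uniform domination by $2 |\log \zeta| / (1 + \zeta^2)$ lets me pass to the limit under the integral by dominated convergence, and since $\int_0^\infty \log \zeta \, (1 + \zeta^2)^{-1} d\zeta = 0$, the right-hand side of the first display tends to $0$, i.e. $\psi^\dagger(\xi) / \sqrt{\psi(\xi^2)} \to 1$. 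The statement for $\xi \to 0^+$ is identical, using regular variation at $0$ to get the same pointwise limit $\zeta^{2\ro}$. Finally, a nonzero CBF $\psi$ is increasing by~\eqref{eq:cbf}, and $\xi / \psi(\xi)$ is again a CBF by Proposition~\ref{prop:cbf:prop}(a), hence increasing; so both standing hypotheses hold, and the two closing assertions of the proposition are the special cases just proved.

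The only genuinely delicate point is the split at $\zeta = 1$: feeding the crude two-sided estimate $|\log(\psi(\xi^2 \zeta^2) / \psi(\xi^2))| \le 2 |\log \zeta|$ into the whole of $(0, \infty)$ would yield the weaker constant $e^{4 \catalan / \pi}$, whereas exploiting that each half of the integral has a definite sign gives the sharp $e^{2 \catalan / \pi} \le 2$. Everything else reduces to the two model integrals $\int_1^\infty \log \zeta \, (1 + \zeta^2)^{-1} d\zeta = \catalan$ and $\int_0^\infty \log \zeta \, (1 + \zeta^2)^{-1} d\zeta = 0$.
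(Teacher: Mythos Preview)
Your argument is correct. Note that the paper does not actually prove this proposition: it is quoted verbatim from~\cite{bib:kmr11} and closed with a \qed, so there is no proof in the present paper to compare against. The approach you give --- rewrite $\log(\psi^\dagger(\xi)/\sqrt{\psi(\xi^2)})$ via~\eqref{eq:dagger1}, trap $\log(\psi(\xi^2\zeta^2)/\psi(\xi^2))$ between $0$ and $2\log\zeta$ using the two monotonicity hypotheses, split at $\zeta=1$, and recognise the Catalan integral --- is the standard one and is essentially what appears in~\cite{bib:kmr11}.
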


The estimate~\eqref{eq:daggerest} for CBFs was obtained independently in~\cite{bib:ksv11}, Proposition~3.7, while~\eqref{eq:regvar} for CBFs was derived in~\cite{bib:ksv9}, Proposition 2.2.

\begin{proposition}[Proposition~3.4 in~\cite{bib:mk10}]
\label{prop:dagger:prop}
Whenever both sides of the following identities make sense, we have:
\begin{enumerate}
\item[(a)] $(1 / \psi)^\dagger = 1 / \psi^\dagger$, $(\psi^\ro)^\dagger = (\psi^\dagger)^\ro$ ($\ro \in \R$) and $(\psi_1 \psi_2)^\dagger = \psi_1^\dagger \psi_2^\dagger$;
\item[(b)] $(c^2 \psi)^\dagger = c \psi^\dagger$ for $c > 0$;
\item[(c)] when $\psi(\xi) = \xi$, then $\psi^\dagger(\xi) = \xi$;
\item[(d)] if appropriate limits of $\psi$ exist, then the corresponding limits of $\psi^\dagger$ exist, and $\lim_{\xi \to 0^+} \psi^\dagger(\xi) = (\lim_{\xi \to 0^+} \psi(\xi))^{1/2}$, $\lim_{\xi \to \infty} \psi^\dagger(\xi) = (\lim_{\xi \to \infty} \psi(\xi))^{1/2}$. \qed
\end{enumerate}
\end{proposition}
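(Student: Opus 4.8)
The plan is to derive all four identities straight from the definition~\eqref{eq:dagger} (or, more conveniently, from its self-similar form~\eqref{eq:dagger1}), treating (a)--(c) as elementary algebra and (d) as a limit-interchange argument. The only auxiliary facts I need are the two elementary evaluations $\frac{1}{\pi}\int_0^\infty\frac{\xi}{\xi^2+\zeta^2}\,d\zeta=\frac12$ for $\xi>0$ (substitute $\zeta=\xi u$ and use $\int_0^\infty(1+u^2)^{-1}du=\pi/2$) and $\int_0^\infty\frac{\log\zeta}{1+\zeta^2}\,d\zeta=0$ (split at $\zeta=1$ and substitute $\zeta\mapsto1/\zeta$ on $(1,\infty)$; the two halves cancel).

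For part (a), note that the map $h\mapsto\frac{1}{\pi}\int_0^\infty\frac{\xi\,h(\zeta^2)}{\xi^2+\zeta^2}\,d\zeta$ is linear on the set of integrands for which the integral converges absolutely, and the hypothesis ``whenever both sides make sense'' guarantees that we stay inside that set. Since $\log(\psi_1\psi_2)=\log\psi_1+\log\psi_2$, $\log(\psi^{\ro})=\ro\log\psi$ and $\log(1/\psi)=-\log\psi$ (all well defined as $\psi>0$), applying this linear map and exponentiating gives $(\psi_1\psi_2)^\dagger=\psi_1^\dagger\psi_2^\dagger$, $(\psi^{\ro})^\dagger=(\psi^\dagger)^{\ro}$ and $(1/\psi)^\dagger=1/\psi^\dagger$ simultaneously. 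Part (b) is the instance of the product identity with the constant factor $c^2$: $(c^2)^\dagger(\xi)=\exp\bigl(\frac{\log c^2}{\pi}\int_0^\infty\frac{\xi}{\xi^2+\zeta^2}\,d\zeta\bigr)=\exp\bigl(\frac12\log c^2\bigr)=c$, whence $(c^2\psi)^\dagger=(c^2)^\dagger\psi^\dagger=c\,\psi^\dagger$. For part (c), put $\psi(\zeta)=\zeta$ in~\eqref{eq:dagger1}: $\psi^\dagger(\xi)=\exp\bigl(\frac{1}{\pi}\int_0^\infty\frac{\log(\xi^2\zeta^2)}{1+\zeta^2}\,d\zeta\bigr)=\exp\bigl(\frac{\log\xi^2}{\pi}\cdot\frac{\pi}{2}+\frac{2}{\pi}\int_0^\infty\frac{\log\zeta}{1+\zeta^2}\,d\zeta\bigr)=\exp(\log\xi+0)=\xi$.

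The only part that requires genuine work is (d), which I would prove by passing the limit inside the integral in~\eqref{eq:dagger1}, $\psi^\dagger(\xi)=\exp\bigl(\frac{1}{\pi}\int_0^\infty\frac{\log\psi(\xi^2\zeta^2)}{1+\zeta^2}\,d\zeta\bigr)$. The normalisation $\psi^\dagger(1)=\exp\bigl(\frac{1}{\pi}\int_0^\infty\frac{\log\psi(\zeta^2)}{1+\zeta^2}\,d\zeta\bigr)$ is finite by the standing assumption, so $\zeta\mapsto(\log\psi(\zeta^2))^{\pm}/(1+\zeta^2)$ are both integrable on $(0,\infty)$. Take $\xi\to0^+$ (the case $\xi\to\infty$ is symmetric). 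Since $\psi$ is increasing (it is a complete Bernstein function, cf.\ Proposition~\ref{prop:cbf:ests}(a)), for $0<\xi\le1$ we have $\psi(\xi^2\zeta^2)\le\psi(\zeta^2)$, hence $(\log\psi(\xi^2\zeta^2))^+\le(\log\psi(\zeta^2))^+$ — an integrable majorant for the positive part; and for each fixed $\zeta$, $\psi(\xi^2\zeta^2)$ decreases to $L_0:=\lim_{\xi\to0^+}\psi(\xi)\ge0$. If $L_0>0$, dominated convergence gives $\frac{1}{\pi}\int_0^\infty\frac{\log\psi(\xi^2\zeta^2)}{1+\zeta^2}\,d\zeta\to\frac{\log L_0}{\pi}\cdot\frac{\pi}{2}=\frac12\log L_0$, so $\psi^\dagger(\xi)\to L_0^{1/2}$; if $L_0=0$, the negative part increases monotonically to $+\infty$ pointwise, so by monotone convergence the integral tends to $-\infty$ and $\psi^\dagger(\xi)\to0=L_0^{1/2}$. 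The limit $\xi\to\infty$ is handled identically with the roles of positive and negative parts exchanged (now $\psi(\xi^2\zeta^2)\ge\psi(\zeta^2)$ for $\xi\ge1$ and $\psi(\xi^2\zeta^2)\uparrow\lim_{\xi\to\infty}\psi(\xi)$), giving $\psi^\dagger(\xi)\to(\lim_{\xi\to\infty}\psi(\xi))^{1/2}$, finite or not.

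The main obstacle is exactly this interchange of limit and integral in (d): one must exhibit an integrable dominating function in the delicate direction, and the point is that the monotonicity of $\psi$ together with the a priori integrability encoded in the phrase ``both sides make sense'' furnishes precisely such a majorant on $(0,1]$ (respectively $[1,\infty)$). Parts (a)--(c) are then pure bookkeeping around the two elementary integral evaluations recorded above.
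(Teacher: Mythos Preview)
The paper does not actually prove this proposition: it is quoted verbatim from~\cite{bib:mk10} and closed with a terminal \qed, so there is no in-paper argument to compare against. Your derivation of (a)--(c) directly from the definition~\eqref{eq:dagger}/\eqref{eq:dagger1} via linearity of the integral in $\log\psi$ and the two elementary evaluations $\frac{1}{\pi}\int_0^\infty\frac{\xi}{\xi^2+\zeta^2}\,d\zeta=\tfrac12$ and $\int_0^\infty\frac{\log\zeta}{1+\zeta^2}\,d\zeta=0$ is correct and is exactly the kind of routine verification one would expect.

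For part~(d) your limit-interchange argument is sound, but note that you invoke monotonicity of $\psi$ by appealing to Proposition~\ref{prop:cbf:ests}(a), i.e.\ you tacitly restrict to complete Bernstein functions. The proposition as phrased here (``whenever both sides make sense'') is nominally about arbitrary positive $\psi$ for which~\eqref{eq:dagger} converges, and for such $\psi$ the dominated/monotone convergence step need not go through without some structural hypothesis. In the context of both this paper and~\cite{bib:mk10} the dagger operation is only ever applied to CBFs (or at least to functions with $\psi$ and $\xi/\psi$ both increasing, cf.\ Proposition~\ref{prop:daggerest}), so your restriction is harmless in practice; it would be cleaner, however, to state it explicitly as an added hypothesis for (d) rather than slip it in mid-proof.
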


The first part of the following result was independently proved in~\cite{bib:ksv10}, Proposition~2.4.

\begin{lemma}[Lemma~3.8 in~\cite{bib:mk10}]
\label{lem:duality}
If $\psi(\xi)$ is a CBF, then also $\psi^\dagger(\xi)$ is a CBF, and
\formula[eq:duality]{
 \psi^\dagger(\xi) \psi^\dagger(-\xi) & = \psi(-\xi^2)
}
for $\xi \in \C \setminus \R$.
\qed
\end{lemma}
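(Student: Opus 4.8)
The plan is to verify both claims directly from the defining integral~\eqref{eq:dagger}, exploiting the characterisation of CBFs in Proposition~\ref{prop:cbf:prop}(b). First I would establish the functional equation~\eqref{eq:duality}. For $\xi \in \C \setminus \R$, say $\im \xi > 0$, write $\log \psi^\dagger(\xi) + \log \psi^\dagger(-\xi)$ as a single contour integral. The kernel $\tfrac{\xi}{\xi^2 + \zeta^2} = \tfrac12\bigl(\tfrac{1}{\xi - i\zeta} + \tfrac{1}{\xi + i\zeta}\bigr)$ and its companion with $\xi \to -\xi$ combine so that, after the substitution $\zeta \mapsto -\zeta$ on half of the terms, the sum becomes $\tfrac{1}{2\pi i}\int_{-\infty}^\infty \bigl(\tfrac{1}{\zeta - i\xi} - \tfrac{1}{\zeta + i\xi}\bigr)\log\psi(\zeta^2)\,d\zeta$ — i.e.\ a Cauchy-type integral of $\log\psi(\zeta^2)$ along the real line with a pole that is picked up on one side only. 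Recognising $\log\psi(\zeta^2)$ as the boundary value (on $\R$) of the function $\zeta \mapsto \log\psi(\zeta^2)$, which is holomorphic in a neighbourhood of $\R$ except at $\zeta = 0$, one closes the contour in the appropriate half-plane and evaluates the residue at $\zeta = i\xi$ (note $(i\xi)^2 = -\xi^2$), giving $\log\psi(-\xi^2)$. The analogous computation for $\im\xi < 0$ gives the same identity by symmetry, so~\eqref{eq:duality} holds on $\C \setminus \R$. Some care is needed near $\zeta = 0$, where $\log\psi$ may blow up if $\psi(0) = 0$, and near $\zeta = \infty$; both are handled by the integrability hypotheses that make $\psi^\dagger$ well-defined, together with Proposition~\ref{prop:cbf:ests}(a),(c), which bound $\log\psi(\zeta^2)$ by $O(\log|\zeta|)$ at both ends.

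Next I would show $\psi^\dagger$ is itself a CBF. By Proposition~\ref{prop:cbf:prop}(b) it suffices to check that $\psi^\dagger$ extends holomorphically to $\C \setminus (-\infty, 0]$ with strictly positive imaginary part in the upper half-plane (and nonnegative values on $(0,\infty)$, which is clear from the real integral formula). Holomorphy of $\xi \mapsto \psi^\dagger(\xi)$ on $\re\xi > 0$ is immediate by differentiating under the integral sign in~\eqref{eq:dagger}. To extend across the imaginary axis and locate the argument, I would take the logarithm: for $\xi$ with $\re\xi > 0$,
\formula{
 \log\psi^\dagger(\xi) & = \frac{1}{\pi}\int_0^\infty \frac{\xi\log\psi(\zeta^2)}{\xi^2 + \zeta^2}\,d\zeta .
}
Writing $\xi = re^{i\alpha}$ with $|\alpha| < \pi/2$, one sees that $\Arg\psi^\dagger(\xi) = \im\log\psi^\dagger(\xi)$; a short computation shows this equals $\tfrac1\pi\int_0^\infty \im\!\bigl(\tfrac{\xi}{\xi^2+\zeta^2}\bigr)\log\psi(\zeta^2)\,d\zeta$, and since $\psi$ is increasing (being a non-trivial CBF, $\psi' \ge 0$) while $\im(\xi/(\xi^2+\zeta^2))$ changes sign exactly once in $\zeta$, one gets the bound $|\Arg\psi^\dagger(\xi)| \le |\alpha|/2 < \pi/4$ — an integration-by-parts argument of the same flavour as the one behind Proposition~\ref{prop:daggerest}. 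Thus $\psi^\dagger$ maps $\{\re\xi>0\}$ into the sector $|\Arg w| < \pi/4$; in particular it has a holomorphic logarithm there and never vanishes. The functional equation~\eqref{eq:duality} then furnishes the analytic continuation to the whole cut plane: for $\im\xi > 0$ set $\psi^\dagger(\xi) := \psi(-\xi^2)/\psi^\dagger(-\xi)$, the right-hand side being holomorphic and nonvanishing there since $-\xi \in \{\re(\cdot) < 0,\ \im(\cdot)<0\}$ maps under $w\mapsto -w$ to the first quadrant where $\psi^\dagger$ is already defined, and $-\xi^2$ avoids $(-\infty,0]$. This continuation agrees with the original on the overlap by uniqueness, and a matching continuation works for $\im\xi<0$. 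Finally, for $\im\xi>0$ one has $\im\log\psi^\dagger(\xi) = \im\log\psi(-\xi^2) - \im\log\psi^\dagger(-\xi)$; using $|\Arg\psi^\dagger(-\xi)| < \pi/4$ (valid by the sector bound since $-\xi$ has negative real part — apply the bound to $\psi^\dagger$ on the left half-plane via the reflection already implicit in~\eqref{eq:duality}) and $0 < \Arg\psi(-\xi^2) < \pi$ (as $-\xi^2$ lies in the upper half-plane and $\psi$ is CBF, by Proposition~\ref{prop:cbf:prop}(b)), one concludes $\Arg\psi^\dagger(\xi) > 0$, i.e.\ $\im\psi^\dagger(\xi) > 0$. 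By Proposition~\ref{prop:cbf:prop}(b), $\psi^\dagger$ is a CBF.

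The main obstacle is the contour-integral manipulation in the first step: justifying the interchange of the principal-value integral over $\R$ with the residue calculation, and controlling the contributions at $\zeta = 0$ (a logarithmic singularity of $\log\psi$ when $c_1 = \psi(0^+) = 0$) and at infinity when one closes the contour. I would handle this by first proving~\eqref{eq:duality} for a well-behaved dense subclass — e.g.\ $\psi$ with $0 < \psi(0^+)$ and $\psi$ bounded, or $\psi$ a finite sum of terms $\xi/(\xi+s)$ plus constants — where the integrand decays and has no boundary singularity, and then passing to the general CBF by approximation, invoking the continuity of the map $\psi \mapsto \psi^\dagger$ advertised in the Preliminaries together with dominated convergence (the domination coming from Proposition~\ref{prop:cbf:ests}(a),(c)). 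Alternatively, and perhaps more cleanly, one can avoid contour integration entirely: observe that $\psi(z)/\psi^\dagger(\sqrt z)$ and $\psi^\dagger(\sqrt z)$ (with $\sqrt z$ the principal branch) are, respectively, shown to be CBF-like objects whose ratio is forced by uniqueness of the Wiener–Hopf-type factorisation of $\psi(z) = \psi^\dagger(\sqrt z)\,\psi^\dagger(-\sqrt z)$ into factors holomorphic and zero-free in complementary half-planes — this is exactly the factorisation~\eqref{eq:duality} reads off, and its uniqueness (given the growth control from Proposition~\ref{prop:cbf:ests}(e)) pins down $\psi^\dagger$ and simultaneously yields that it is CBF. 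Either route closes the argument.
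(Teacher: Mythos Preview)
The paper does not prove this lemma at all: it is stated with a terminal \qed\ and a citation to Lemma~3.8 of~\cite{bib:mk10}, so there is no ``paper's own proof'' to compare against. I will therefore comment only on the internal soundness of your proposal.

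There is a genuine gap in the contour step. You propose to obtain~\eqref{eq:duality} by writing the sum $\log\psi^\dagger(\xi)+\log\psi^\dagger(-\xi)$ as $\tfrac{1}{2\pi i}\int_{-\infty}^{\infty}\bigl(\tfrac{1}{\zeta-i\xi}-\tfrac{1}{\zeta+i\xi}\bigr)\log\psi(\zeta^2)\,d\zeta$ and then ``closing the contour in the appropriate half-plane''. But $\log\psi(\zeta^2)$ is \emph{not} holomorphic in either half-plane: $\psi$ is only defined on $\C\setminus(-\infty,0]$, so $\zeta\mapsto\log\psi(\zeta^2)$ has a branch cut along the whole imaginary $\zeta$-axis, and no contour closing is available. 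The correct mechanism is different: the integral in~\eqref{eq:dagger} defines a holomorphic function separately on $\{\re\xi>0\}$ and on $\{\re\xi<0\}$; on the latter it equals, by oddness, $-\log\psi^\dagger(-\xi)$. The Plemelj-type jump across the positive imaginary axis, coming from the simple pole at $\zeta=-i\xi$ crossing $(0,\infty)$, is exactly $\log\psi(-\xi^2)$. This \emph{simultaneously} gives the analytic continuation of $\psi^\dagger$ to the second quadrant and the identity~\eqref{eq:duality} there --- the functional equation is the continuation formula, not something proved beforehand. Your write-up reverses this order and becomes circular: you first ``establish'' \eqref{eq:duality} on $\C\setminus\R$ (where $\psi^\dagger(-\xi)$ is not yet defined), and only afterwards use it to define $\psi^\dagger$ on the left half-plane.

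A second, smaller error: the sector bound $|\Arg\psi^\dagger(\xi)|\le|\alpha|/2$ is false. Take $\psi(\xi)=\xi$; then $\psi^\dagger(\xi)=\xi$ by Proposition~\ref{prop:dagger:prop}(c), so $\Arg\psi^\dagger(re^{i\alpha})=\alpha$, not $\alpha/2$. The monotonicity/sign-change argument you sketch actually yields $\Arg\psi^\dagger(\xi)\in[0,\alpha]$ for $\xi$ in the first quadrant, using that both $\log\psi(\zeta^2)$ and $\log\psi(\zeta^2)-2\log\zeta$ are monotone (since $\psi(\xi)$ and $\xi/\psi(\xi)$ are CBFs). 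That range is still enough, but your final paragraph relies on $|\Arg\psi^\dagger(-\xi)|<\pi/4$, which you no longer have; you would need the sharper two-sided bound $\Arg\psi^\dagger(-\xi)\in[\Arg(-\xi),0]$ together with $\Arg\psi(-\xi^2)\in[0,\Arg(-\xi^2)]$ to conclude $\Arg\psi^\dagger(\xi)\in[0,\Arg\xi]$ in the second quadrant.
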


\begin{proposition}
\label{prop:daggercontinuity}
Let $\psi_n$ be a sequence of CBFs. If $\psi_n(\xi) \to \psi(\xi)$ as $n \to \infty$ for all $\xi > 0$, then $\psi_n^\dagger(\xi) \to \psi^\dagger(\xi)$ locally uniformly in $\xi \in \C \setminus (-\infty, 0]$.
\end{proposition}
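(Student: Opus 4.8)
The plan is to prove convergence of $\psi_n^\dagger$ by combining a pointwise (or on compacts of $(0,\infty)$) convergence argument for the defining integral in~\eqref{eq:dagger} with a normal-families/Vitali argument to upgrade to locally uniform convergence on $\C \setminus (-\infty, 0]$. The first observation is that convergence of CBFs $\psi_n \to \psi$ on $(0,\infty)$ is automatically quite strong: by Proposition~\ref{prop:cbf:prop}(b) each $\psi_n$ extends holomorphically to $\C \setminus (-\infty, 0]$ with nonnegative imaginary part in the upper half-plane, and a standard Vitali/normal-families argument (using the representation~\eqref{eq:cbf} and the fact that pointwise convergence on $(0,\infty)$ forces boundedness of the constants $c_1^{(n)}, c_2^{(n)}$ and tightness-type control of the measures $m_n$) shows $\psi_n \to \psi$ locally uniformly on $\C \setminus (-\infty, 0]$, with $\psi$ itself a CBF. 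In particular $\psi(\xi) > 0$ for $\xi > 0$ (non-triviality is inherited since the limit is assumed to be a genuine function value), so each $\psi_n^\dagger$ and $\psi^\dagger$ is well defined via Proposition~\ref{prop:daggerest} (which guarantees the integral in~\eqref{eq:dagger} converges for every CBF).

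Next I would fix $\xi > 0$ and show $\psi_n^\dagger(\xi) \to \psi^\dagger(\xi)$ by passing to the limit inside the integral in~\eqref{eq:dagger1},
\formula{
 \log \psi_n^\dagger(\xi) & = \frac{1}{\pi} \int_0^\infty \frac{\log \psi_n(\xi^2 \zeta^2)}{1 + \zeta^2} \, d\zeta .
}
The integrand converges pointwise in $\zeta$ since $\psi_n \to \psi$ pointwise on $(0,\infty)$, so it remains to justify a dominated-convergence bound uniform in $n$. For this one uses the structural estimate $0 \le s\psi'(s) \le \psi(s)$ from Proposition~\ref{prop:cbf:ests}(a), which implies that for any CBF $f$ the ratio $f(s)/s$ is nonincreasing and $f(s)$ is nondecreasing; hence $f(s) \le \max(1,s) f(1)$ and $f(s) \ge \min(1,s) f(1)$, giving $|\log f(s)| \le |\log s| + |\log f(1)|$. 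Applied to $f = \psi_n$ this yields $|\log \psi_n(\xi^2\zeta^2)| \le 2|\log(\xi\zeta)| + |\log \psi_n(1)|$, and since $\psi_n(1) \to \psi(1) \in (0,\infty)$ the values $|\log \psi_n(1)|$ are bounded uniformly in $n$; the resulting dominating function $(2|\log(\xi\zeta)| + C)/(1+\zeta^2)$ is integrable on $(0,\infty)$. Dominated convergence then gives pointwise convergence $\psi_n^\dagger(\xi) \to \psi^\dagger(\xi)$ on $(0,\infty)$ — and, with the same bound depending continuously on $\xi$, locally uniformly on $(0,\infty)$.

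Finally I would upgrade to locally uniform convergence on all of $\C \setminus (-\infty,0]$ using Lemma~\ref{lem:duality}: each $\psi_n^\dagger$ is itself a CBF, hence holomorphic on $\C \setminus (-\infty,0]$, and by Proposition~\ref{prop:cbf:ests}(e) the family $\{\psi_n^\dagger\}$ is locally uniformly bounded there once we know $\psi_n^\dagger(1)$ is bounded (which we do, from the previous step). A normal-families argument applies: every subsequence has a further subsequence converging locally uniformly to some holomorphic limit, and that limit must agree with $\psi^\dagger$ because the two coincide on $(0,\infty)$ by the previous paragraph; therefore the whole sequence converges locally uniformly to $\psi^\dagger$ on $\C \setminus (-\infty,0]$. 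The main obstacle is the uniform integrable domination in the second step — everything hinges on having an $n$-independent integrable bound on $|\log \psi_n(\xi^2\zeta^2)|/(1+\zeta^2)$, and the cleanest route is the monotonicity of $s \mapsto \psi_n(s)/s$ and $s \mapsto \psi_n(s)$ coming from Proposition~\ref{prop:cbf:ests}(a) together with the single normalisation $\psi_n(1) \to \psi(1)$; the normal-families steps at the endpoints are then routine.
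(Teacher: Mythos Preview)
Your proposal is correct and follows essentially the same route as the paper: both establish pointwise convergence of $\psi_n^\dagger$ on positive reals via dominated convergence in the defining integral~\eqref{eq:dagger1}, using the identical bound $\psi_n(1)\min(1,s) \le \psi_n(s) \le \psi_n(1)\max(1,s)$ together with $\psi_n(1) \to \psi(1)$, and then upgrade to locally uniform convergence on $\C \setminus (-\infty,0]$ by exploiting that each $\psi_n^\dagger$ is itself a CBF. The only cosmetic difference is in this last upgrade: the paper cites an external continuity result for CBFs and then checks local equicontinuity from the derivative bound in Proposition~\ref{prop:cbf:ests}(d), whereas you invoke Montel's theorem via local uniform boundedness (Proposition~\ref{prop:cbf:ests}(c) combined with boundedness of $\psi_n^\dagger(1)$) and the identity principle; both are standard and equally short. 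Your preliminary paragraph on locally uniform convergence of $\psi_n$ itself is not actually used downstream and could be dropped.
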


\begin{proof}
For any CBFs $\psi(\xi)$, $\tilde{\psi}(\xi)$, we have
\formula[eq:daggercontinuity]{
 \frac{\tilde{\psi}^\dagger(\xi)}{\psi^\dagger(\xi)} & = \exp\expr{\frac{1}{\pi} \int_0^\infty \frac{\xi \log (\tilde{\psi}(\zeta^2) / \psi(\zeta^2))}{\xi^2 + \zeta^2} \, d\zeta}
}
when $\re \xi > 0$. Furthermore, by monotonicity and concavity of $\psi_n$, $\psi_n(1) \min(1, \zeta^2) \le \psi_n(\zeta^2) \le \psi_n(1) \max(1, \zeta^2)$ ($\zeta > 0$), and therefore there are $c_1, c_2 > 0$ such that $c_1 \min(1, \zeta^2) \le \psi_n(\zeta^2) \le c_2 \max(1, \zeta^2)$. Hence, by dominated convergence,~\eqref{eq:daggercontinuity} gives convergence of $\psi_n^\dagger(\xi)$ to $\psi^\dagger(\xi)$ when $\re \xi > 0$. By Corollary~7.6(b) in~\cite{bib:ssv10}, $\psi_n^\dagger(\xi) \to \psi^\dagger(\xi)$ for all $\xi \in \C \setminus (-\infty, 0]$. Pointwise convergence of CBFs is automatically locally uniform on $\C \setminus (-\infty, 0]$: by Proposition~\ref{prop:cbf:ests}(d,e), when $\Arg \xi \in (-\pi + \eps, \pi - \eps)$, we have $|(\psi_n^\dagger)'(\xi)| \le c_3(\eps) \psi_n^\dagger(|\xi|) / |\xi| \le c_4(\eps, f) \max(|\xi|^{-1}, 1)$, which proves that $\psi_n^\dagger$ are locally equicontinuous in $\C \setminus (-\infty, 0]$.
\end{proof}

As in~\cite{bib:mk10}, for a nonvanishing, increasing and differentiable function $\psi$ with strictly positive derivative, we denote
\formula[eq:psilambda]{
 \psi_\lambda(\xi) & = \frac{1 - \xi / \lambda^2}{1 - \psi(\xi) / \psi(\lambda^2)} \, , && \lambda, \xi > 0 , \, \lambda^2 \ne \xi .
}
This definition is extended continuously by $\psi_\lambda(\lambda^2) = \psi(\lambda^2) / (\lambda^2 \psi'(\lambda^2))$, and when $\psi$ extends holomorphically to $\C \setminus (-\infty, 0]$, also $\psi_\lambda(\xi)$ is defined for $\xi \in \C \setminus (-\infty, 0]$. For simplicity, we denote $\psi_\lambda^\dagger(\xi) = (\psi_\lambda)^\dagger(\xi)$. By Proposition~\ref{prop:cbf:prop}(c), if $\psi(\xi)$ is a CBF, then $\psi_\lambda(\xi)$ (and hence also $\psi_\lambda^\dagger$) is a CBF for any $\lambda > 0$.

Although~\eqref{eq:psilambda} is generally used for complete Bernstein functions $\psi$, we will occasionaly need this definition for a \emph{negative} function $\psi(\xi) = -\xi^\ro$ with $\ro \in (-1, 0)$.

\begin{corollary}
\label{cor:psilambdadaggerest}
If $\psi(\xi)$ is a CBF, then
\formula{
 e^{-2 \catalan / \pi} \sqrt{\psi_\lambda(\xi^2)} & \le \psi_\lambda^\dagger(\xi) \le e^{2 \catalan / \pi} \sqrt{\psi_\lambda(\xi^2)} . \qed
}
\end{corollary}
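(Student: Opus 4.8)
The plan is to deduce this directly from Proposition~\ref{prop:daggerest}, applied not to $\psi$ itself but to the auxiliary function $\psi_\lambda$ introduced in~\eqref{eq:psilambda}. The estimate~\eqref{eq:daggerest} in that proposition is valid for every complete Bernstein function (this is exactly the content of its ``in particular'' clause, since for a CBF $f$ both $f$ and $z/f(z)$ are CBFs, hence nonnegative and increasing, by Proposition~\ref{prop:cbf:prop}(a)). So the whole task reduces to checking that $\psi_\lambda$ is a CBF and recalling that, by definition, $\psi_\lambda^\dagger(\xi) = (\psi_\lambda)^\dagger(\xi)$, i.e.\ that the operation~\eqref{eq:dagger} is being applied to the function $\psi_\lambda$.

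The key step is the observation already recorded in the text just before the statement: rewriting
\[
 \psi_\lambda(\xi) = \frac{\psi(\lambda^2)}{\lambda^2} \cdot \frac{\lambda^2 - \xi}{\psi(\lambda^2) - \psi(\xi)} ,
\]
and invoking Proposition~\ref{prop:cbf:prop}(c): since $\psi$ is a CBF and $\lambda^2 > 0$, the function $(\lambda^2 - \xi)/(\psi(\lambda^2) - \psi(\xi))$, extended continuously at $\xi = \lambda^2$, is a CBF, and multiplication by the positive constant $\psi(\lambda^2)/\lambda^2$ keeps it a CBF. Applying Proposition~\ref{prop:daggerest} with $\psi_\lambda$ in place of $\psi$ then gives
\[
 e^{-2 \catalan / \pi} \sqrt{\psi_\lambda(\xi^2)} \le \psi_\lambda^\dagger(\xi) \le e^{2 \catalan / \pi} \sqrt{\psi_\lambda(\xi^2)} ,
\]
which is the claim.

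There is no genuine obstacle here; this is a one-line corollary. The only point that deserves a moment's attention is confirming that the hypotheses of Proposition~\ref{prop:daggerest} really are met by $\psi_\lambda$ — but this is automatic once $\psi_\lambda$ is known to be a CBF, so nothing beyond citing Proposition~\ref{prop:cbf:prop}(c) and Proposition~\ref{prop:daggerest} is needed.
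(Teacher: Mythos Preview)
Your proposal is correct and matches the paper's own argument exactly: the paper treats this as an immediate consequence of the fact (noted just before the corollary, via Proposition~\ref{prop:cbf:prop}(c)) that $\psi_\lambda$ is a CBF, followed by a direct application of Proposition~\ref{prop:daggerest}.
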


\begin{corollary}
\label{cor:psilambdacontinuity}
If $\psi$ is a CBF, then the function $\psi_\lambda^\dagger(\xi)$ is jointly continuous in $\lambda > 0$, $\xi \in \C \setminus (-\infty, 0)$.\qed
\end{corollary}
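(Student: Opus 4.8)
The plan is to reduce joint continuity to Proposition~\ref{prop:daggercontinuity}, applied with $\lambda$ (rather than $n$) as the varying parameter. Since the domain $(0,\infty) \times (\C \setminus (-\infty, 0))$ is a metric space, it is enough to verify sequential continuity: fix a sequence $(\lambda_n, \xi_n) \to (\lambda_0, \xi_0)$ in this set, and show $\psi_{\lambda_n}^\dagger(\xi_n) \to \psi_{\lambda_0}^\dagger(\xi_0)$.

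The first step is to observe that for each fixed $\zeta > 0$ the map $\lambda \mapsto \psi_\lambda(\zeta)$ is continuous on $(0,\infty)$. Indeed, writing
\[
 \psi_\lambda(\zeta) = \frac{\psi(\lambda^2)}{\lambda^2} \cdot \frac{\lambda^2 - \zeta}{\psi(\lambda^2) - \psi(\zeta)}
\]
and using that $\psi$, being a non-constant CBF, is real-analytic on $(0,\infty)$ with $\psi' > 0$, the divided difference $(\lambda^2 - \zeta)/(\psi(\lambda^2) - \psi(\zeta))$ extends to a continuous, strictly positive function of $(\lambda, \zeta) \in (0,\infty)^2$ (its value on the diagonal $\lambda^2 = \zeta$ being $1/\psi'(\zeta)$). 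In particular $\psi_{\lambda_n}(\zeta) \to \psi_{\lambda_0}(\zeta)$ for every $\zeta > 0$. By Proposition~\ref{prop:cbf:prop}(c) each $\psi_{\lambda_n}$, as well as $\psi_{\lambda_0}$, is a CBF, so Proposition~\ref{prop:daggercontinuity} applies and yields $\psi_{\lambda_n}^\dagger \to \psi_{\lambda_0}^\dagger$ locally uniformly on $\C \setminus (-\infty, 0]$.

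If $\xi_0 \in \C \setminus (-\infty, 0]$, the proof is then complete: $\psi_{\lambda_0}^\dagger$ is a CBF by Lemma~\ref{lem:duality}, hence continuous near $\xi_0$, and uniform convergence on a fixed compact neighbourhood of $\xi_0$ together with $\xi_n \to \xi_0$ forces $\psi_{\lambda_n}^\dagger(\xi_n) \to \psi_{\lambda_0}^\dagger(\xi_0)$. For the endpoint $\xi_0 = 0$ I would additionally use that $\psi_\lambda(0^+) = \psi(\lambda^2)/(\psi(\lambda^2) - \psi(0^+)) \ge 1$ by monotonicity of $\psi$, whence $\psi_\lambda^\dagger(0^+) = (\psi_\lambda(0^+))^{1/2} \ge 1$ by Proposition~\ref{prop:dagger:prop}(d); since each $\psi_\lambda^\dagger$ is a CBF that is bounded and bounded away from $0$ near the origin, it extends continuously to $\xi = 0$, and one only has to check that this extension is still jointly continuous at $(\lambda_0, 0)$. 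This follows by rerunning the estimates in the proof of Proposition~\ref{prop:daggercontinuity} with $\lambda$ restricted to a compact interval around $\lambda_0$: by concavity of the CBF $\psi_\lambda$ and continuity of $\lambda \mapsto \psi_\lambda(1)$ there are constants $c_1, c_2 > 0$, uniform in $\lambda$ on that interval, with $c_1 \min(1, \zeta^2) \le \psi_\lambda(\zeta^2) \le c_2 \max(1, \zeta^2)$ for all $\zeta > 0$, which makes all the dominated-convergence and equicontinuity estimates uniform in $\lambda$ and pins down the behaviour of $\psi_\lambda^\dagger$ near $\xi = 0$.

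The only real content lies in the pointwise convergence $\psi_{\lambda_n} \to \psi_{\lambda_0}$ of the second paragraph and in the handling of the single point $\xi_0 = 0$; the rest is Proposition~\ref{prop:daggercontinuity} plus elementary topology. I expect the (mild) obstacle to be precisely the endpoint $\xi_0 = 0$: the derivative bound $|(\psi_\lambda^\dagger)'(\xi)| \le C(\eps) \, \psi_\lambda^\dagger(|\xi|)/|\xi|$ used for equicontinuity in Proposition~\ref{prop:daggercontinuity} degenerates as $\xi \to 0$, so one must instead exploit that $\psi_\lambda^\dagger$ remains between two positive constants near the origin, uniformly for $\lambda$ in compact sets.
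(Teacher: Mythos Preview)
Your approach matches the paper's: the corollary carries a bare \qed\ immediately after Proposition~\ref{prop:daggercontinuity}, so it is simply recorded as a direct consequence of that proposition together with the evident pointwise continuity of $\lambda \mapsto \psi_\lambda$. You are in fact more careful than the paper in singling out the endpoint $\xi_0 = 0$, which the paper does not treat separately; your sketch there becomes rigorous once you observe that $\psi_\lambda^\dagger(\xi) - \psi_\lambda^\dagger(0^+)$ is again a CBF vanishing at the origin, so Proposition~\ref{prop:cbf:ests}(c) controls its modulus in any sector by its value on the positive real axis, which your uniform dominated-convergence bounds already handle.
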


The following monotonicity property of $\psi_\lambda(\xi)$ plays an important role.

\begin{proposition}
\label{prop:psilambdaest}
Suppose that for twice differentiable, nonvanishing and increasing functions $\psi(\xi)$ and $\tilde{\psi}(\xi)$ we have
\formula[eq:secondinequality]{
 \frac{-\psi''(\zeta)}{\psi'(\zeta)} & \le \frac{-\tilde{\psi}''(\zeta)}{\tilde{\psi}'(\zeta)} \/ , & \zeta > 0 .
}
Then
\formula{
 \frac{\psi_\lambda(\xi^2)}{\psi_\lambda(\lambda^2)} \le \frac{\tilde{\psi}_\lambda(\xi^2)}{\tilde{\psi}_\lambda(\lambda^2)} \, , && 0 < \lambda < \xi , \\
 \frac{\psi_\lambda(\xi^2)}{\psi_\lambda(\lambda^2)} \ge \frac{\tilde{\psi}_\lambda(\xi^2)}{\tilde{\psi}_\lambda(\lambda^2)} \, , && 0 < \xi < \lambda .
}
\end{proposition}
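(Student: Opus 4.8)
The plan is to reduce the statement to an elementary comparison of averages of $\psi'$, using a closed form for the ratio $\psi_\lambda(\xi^2)/\psi_\lambda(\lambda^2)$. First I would combine the definition~\eqref{eq:psilambda} with $\psi_\lambda(\lambda^2) = \psi(\lambda^2)/(\lambda^2 \psi'(\lambda^2))$ to record
\[
 \frac{\psi_\lambda(\xi^2)}{\psi_\lambda(\lambda^2)} = \frac{(\lambda^2 - \xi^2)\,\psi'(\lambda^2)}{\psi(\lambda^2) - \psi(\xi^2)} ,
\]
valid for $\lambda, \xi > 0$ with $\xi \ne \lambda$ (and equal to $1$ when $\xi = \lambda$), and likewise for $\tilde\psi$. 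Writing $u = \xi^2$, $v = \lambda^2$, it is convenient to pass to the reciprocal and use the mean-value representation
\[
 \frac{\psi_\lambda(\lambda^2)}{\psi_\lambda(\xi^2)} = \frac{\psi(v) - \psi(u)}{(v-u)\,\psi'(v)} = \frac{1}{v - u} \int_u^v \frac{\psi'(s)}{\psi'(v)} \, ds ,
\]
so that everything comes down to comparing $\psi'(s)/\psi'(v)$ with $\tilde\psi'(s)/\tilde\psi'(v)$ for $s$ between $u$ and $v$.

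The next step is to encode these ratios via the hypothesis~\eqref{eq:secondinequality}. Since $\psi'$ is strictly positive and differentiable, $(\log \psi')' = \psi''/\psi'$, and hence for $s$ between $u$ and $v$,
\[
 \frac{\psi'(s)}{\psi'(v)} = \exp\expr{-\int_s^v \frac{\psi''(r)}{\psi'(r)} \, dr} .
\]
When $\xi < \lambda$, i.e.\ $u < v$, this equals $\exp\expr{\int_s^v (-\psi''(r)/\psi'(r)) \, dr}$ with $s \le v$; when $\xi > \lambda$, i.e.\ $u > v$, it equals $\exp\expr{-\int_v^s (-\psi''(r)/\psi'(r)) \, dr}$ with $s \ge v$. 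The analogous identities hold for $\tilde\psi$.

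Now~\eqref{eq:secondinequality} closes the argument. If $0 < \xi < \lambda$, then for every $s \in (u, v)$ we have $\int_s^v (-\psi''/\psi') \le \int_s^v (-\tilde\psi''/\tilde\psi')$, hence $\psi'(s)/\psi'(v) \le \tilde\psi'(s)/\tilde\psi'(v)$; averaging over $s \in (u,v)$ gives $\psi_\lambda(\lambda^2)/\psi_\lambda(\xi^2) \le \tilde\psi_\lambda(\lambda^2)/\tilde\psi_\lambda(\xi^2)$, and passing to reciprocals of these positive quantities yields $\psi_\lambda(\xi^2)/\psi_\lambda(\lambda^2) \ge \tilde\psi_\lambda(\xi^2)/\tilde\psi_\lambda(\lambda^2)$. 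If $0 < \lambda < \xi$, then for $s \in (v, u)$ we get $\int_v^s (-\psi''/\psi') \le \int_v^s (-\tilde\psi''/\tilde\psi')$, so $\psi'(s)/\psi'(v) \ge \tilde\psi'(s)/\tilde\psi'(v)$, and the same averaging-and-reciprocal step gives $\psi_\lambda(\xi^2)/\psi_\lambda(\lambda^2) \le \tilde\psi_\lambda(\xi^2)/\tilde\psi_\lambda(\lambda^2)$.

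I do not expect a genuine obstacle; the computation is short once the closed form for $\psi_\lambda(\xi^2)/\psi_\lambda(\lambda^2)$ is written down, and no concavity of $\psi$ is needed since the sign of $-\psi''/\psi'$ plays no role. The only points requiring care are the bookkeeping of the orientation of the integrals (and the resulting sign flip in the exponent) in the two regimes $\xi < \lambda$ and $\xi > \lambda$, and the observation that all quantities involved are strictly positive, so that passing to reciprocals reverses the inequalities in the intended direction.
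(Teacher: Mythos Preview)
Your proof is correct and follows essentially the same route as the paper: both arguments integrate the hypothesis once to compare $\psi'(s)/\psi'(v)$ with $\tilde\psi'(s)/\tilde\psi'(v)$, and then integrate a second time (what you call ``averaging'') to compare $(\psi(\xi^2)-\psi(\lambda^2))/\psi'(\lambda^2)$ with its tilded counterpart, after which the closed form for $\psi_\lambda(\xi^2)/\psi_\lambda(\lambda^2)$ finishes the job. Your bookkeeping of the orientation of the integrals in the two regimes $\xi<\lambda$ and $\xi>\lambda$ is in fact cleaner than the paper's presentation.
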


\begin{proof}
Integration in $\zeta$ of both sides of~\eqref{eq:secondinequality} yields that
\formula{
 \frac{\psi'(\zeta)}{\psi'(\xi_1)} & \le \frac{\tilde{\psi}'(\zeta)}{\tilde{\psi}'(\xi_1)} \/ , && 0 < \xi_1 < \zeta ,
}
and by another integration in $\zeta$,
\formula{
 \frac{\psi(\xi_2) - \psi(\xi_1)}{\psi'(\xi_1)} & \le \frac{\tilde{\psi}(\xi_2) - \tilde{\psi}(\xi_1)}{\tilde{\psi}'(\xi_1)} \/ , && 0 < \xi_1 < \xi_2 .
}
Taking $\xi_1 = \lambda^2$ and $\xi_2 = \xi^2$, we obtain
\formula{
 \frac{\psi_\lambda(\xi^2)}{\psi_\lambda(\lambda^2)} & = \frac{\psi'(\lambda^2) (\lambda^2 - \xi^2)}{\psi(\lambda^2) - \psi(\xi^2)} \ge \frac{\tilde{\psi}'(\lambda^2) (\lambda^2 - \xi^2)}{\tilde{\psi}(\lambda^2) - \tilde{\psi}(\xi^2)} = \frac{\tilde{\psi}_\lambda(\xi^2)}{\tilde{\psi}_\lambda(\lambda^2)} \, , && 0 < \xi < \lambda .
}
The other part is proved in a similar manner.
\end{proof}

Finally, recall that $\psi_\lambda(\lambda^2) = \psi(\lambda^2) / (\lambda^2 \psi'(\lambda^2))$. This gives the following simple estimate:
\formula[eq:psilambdaest3]{
 \frac{\psi_\lambda(\xi^2)}{\psi_\lambda(\lambda^2)} & = \frac{\psi'(\lambda^2) (\lambda^2 - \xi^2)}{\psi(\lambda^2) - \psi(\xi^2)} \le \frac{\psi'(\lambda^2) \max(\lambda^2, \xi^2)}{|\psi(\lambda^2) - \psi(\xi^2)|} \/ , & \lambda, \xi > 0
}
when $\psi$ is a CBF.

Let $\psi(\xi)$ be the complete Bernstein function related with the L\'evy-Khintchine exponent $\Psi(\xi)$ of the L\'evy process $X_t$, $\Psi(\xi) = \psi(\xi^2)$. With the notation of this section, for $\lambda, \xi > 0$ we have (see Remark~4.12 in~\cite{bib:mk10}, and the proof of Theorem~4.1 in~\cite{bib:kmr11})
\formula[eq:ltau]{
 \int_0^\infty e^{-\xi x} \pr(\tau_x > t) dx = \frac{2}{\pi} \int_0^\infty \frac{\lambda}{\lambda^2 + \xi^2} \, \frac{\lambda \psi'(\lambda^2)}{\psi(\lambda^2)} \, \psi_\lambda^\dagger(\xi) e^{-t \psi(\lambda^2)} d\lambda ;
}
\formula[eq:theta0]{
 \thet_\lambda & = \frac{1}{\pi} \int_0^\infty \frac{\lambda}{\lambda^2 - \zeta^2} \, \log \frac{\psi_\lambda(\zeta^2)}{\psi_\lambda(\lambda^2)} \, d\zeta = \Arg \psi_\lambda^\dagger(i \lambda) ;
}
\formula[eq:lf0]{
 \laplace F_\lambda(\xi) &= \frac{\lambda}{\lambda^2 + \xi^2} \, \frac{\psi_\lambda^\dagger(\xi)}{\sqrt{\psi_\lambda(\lambda^2)}} \, ;
}
\formula[eq:gamma0]{
 \gamma_\lambda(d\xi) &= \frac{1}{\pi} \frac{\sqrt{\psi_\lambda(\lambda^2)}}{\psi_\lambda^\dagger(\xi)} \, \im \frac{\lambda \psi'(\lambda^2)}{\psi(\lambda^2) - \psi^+(-\xi^2)} \, d\xi ;
}
for the last equality the assumption of the final part of Theorem~\ref{th:eigenfunctions} is required.

%
%                            ---------- o ----------
%

\subsection{Estimates for the Laplace transform}
\label{sec:lap}

This short section contains some rather standard estimates for the inverse Laplace transform, similar to those used in~\cite{bib:kmr11}.

\begin{proposition}
\label{prop:lb}
Let $a > 0$, $c \ge 1$. If $f$ is nonnegative and $f(x) \le c f(a) \max(1, x/a)$ \textrm{($x > 0$)}, then for any $\xi > 0$,
\formula{
 f(a) & \ge \frac{\xi \laplace f(\xi)}{c (1 + (a \xi)^{-1} e^{-a \xi})} \/ .
}
\end{proposition}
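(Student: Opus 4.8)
The plan is a direct computation with the defining integral. If $f(a) = \infty$ the asserted inequality is trivial, so assume $f(a) < \infty$; then the hypothesis $f(x) \le c f(a) \max(1, x/a)$ shows that $f$ is finite everywhere and that $\laplace f(\xi) = \int_0^\infty e^{-\xi x} f(x) \, dx$ is finite for every $\xi > 0$. Inserting the pointwise bound under the integral sign gives
\formula{
 \laplace f(\xi) & \le c f(a) \int_0^\infty e^{-\xi x} \max\expr{1, \frac{x}{a}} dx .
}

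The next step is to split the remaining integral at $x = a$ and evaluate the two elementary pieces. On $(0, a)$ the integrand is $e^{-\xi x}$, contributing $(1 - e^{-a\xi})/\xi$. On $(a, \infty)$ it is $e^{-\xi x} x / a$; integrating by parts, $\int_a^\infty x e^{-\xi x} dx = e^{-a\xi}(a/\xi + 1/\xi^2)$, so this piece contributes $e^{-a\xi}(1/\xi + 1/(a\xi^2))$. Adding the two contributions, the two copies of $e^{-a\xi}/\xi$ cancel, leaving exactly
\formula{
 \int_0^\infty e^{-\xi x} \max\expr{1, \frac{x}{a}} dx & = \frac{1}{\xi} + \frac{e^{-a\xi}}{a \xi^2} = \frac{1}{\xi} \expr{1 + (a\xi)^{-1} e^{-a\xi}} .
}
Combining with the displayed bound on $\laplace f(\xi)$ and dividing by $c (1 + (a\xi)^{-1} e^{-a\xi})/\xi$ (which is positive) yields the claim.

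There is essentially no obstacle here: the only point requiring a word is the justification that the manipulations are legitimate, i.e. that $f$ and $\laplace f(\xi)$ are finite, which as noted follows from the hypothesis (with the degenerate case $f(a) = \infty$ handled separately). Everything else is the two one-line integral evaluations and the cancellation of the $e^{-a\xi}/\xi$ terms.
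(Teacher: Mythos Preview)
Your proof is correct and follows essentially the same approach as the paper: bound $f$ by $c f(a)\max(1,x/a)$, split the Laplace integral at $x=a$, evaluate the two elementary pieces, and observe the cancellation that leaves $\xi^{-1}(1+(a\xi)^{-1}e^{-a\xi})$. The paper's version is slightly terser (it works with $\xi\laplace f(\xi)$ throughout and omits the finiteness discussion), but the argument is the same.
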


\begin{proof}
We have
\formula{
 \xi \laplace f(\xi) & = \int_0^a \xi e^{-\xi x} f(x) dx + \int_a^\infty \xi e^{-\xi x} f(x) dx \\
 & \le c f(a) \int_0^a \xi e^{-\xi x} dx + \frac{c f(a)}{a} \int_a^\infty \xi x e^{-\xi x} dx \\
 & = c f(a) (1 - e^{-a \xi}) + \frac{c f(a)}{a \xi} \, (1 + a \xi) e^{-a \xi} \\
 & = c f(a) (1 + (a \xi)^{-1} e^{-a \xi}) . \qedhere
}
\end{proof}

\begin{proposition}
\label{prop:ub}
If $f$ is nonnegative and increasing, then for $a, \xi > 0$,
\formula{
 f(a) & \le e^{a \xi} \xi \laplace f(\xi) \/ .
}
\end{proposition}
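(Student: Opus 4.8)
The plan is to bound the Laplace integral from below by restricting the range of integration to $[a, \infty)$ and then exploiting that $f$ is increasing. First I would write $\xi \laplace f(\xi) = \int_0^\infty \xi e^{-\xi x} f(x)\, dx$ and discard the contribution of $(0, a)$; this is legitimate because the integrand $\xi e^{-\xi x} f(x)$ is nonnegative. On the remaining range $x \ge a$ we have $f(x) \ge f(a)$ by monotonicity of $f$, so $\xi \laplace f(\xi) \ge f(a) \int_a^\infty \xi e^{-\xi x}\, dx = f(a) e^{-a\xi}$. Multiplying both sides by $e^{a\xi}$ yields the claimed bound $f(a) \le e^{a\xi} \xi \laplace f(\xi)$.

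There is essentially no obstacle: the argument consists of exactly two elementary steps — truncate the defining integral and apply monotonicity — and it mirrors, in a simpler form, the lower-bound computation carried out in the proof of Proposition~\ref{prop:lb}. The only point worth noting is that the inequality is of substance only when $\laplace f(\xi) < \infty$ (otherwise both sides may be infinite), which is automatically the case in the applications, where $f$ grows at most polynomially or is bounded.
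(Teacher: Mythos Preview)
Your proof is correct and follows essentially the same approach as the paper: split (or truncate) the Laplace integral at $a$, drop the nonnegative contribution over $(0,a)$, and use $f(x)\ge f(a)$ on $[a,\infty)$ to get $\xi\laplace f(\xi)\ge f(a)e^{-a\xi}$.
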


\begin{proof}
As before,
\formula{
 \xi \laplace f(\xi) & = \int_0^a \xi e^{-\xi x} f(x) dx + \int_a^\infty \xi e^{-\xi x} f(x) dx \\
 & \ge f(a) \int_a^\infty \xi e^{-\xi x} dx = f(a) e^{-a \xi} . \qedhere
}
\end{proof}

\begin{proposition}
\label{prop:ub2}
Let $b \ge a > 0$, and suppose that $f$ is nonnegative and increasing on $(0, b)$, and $f(x) \ge m$ for $x \ge b$. Then for any $\xi > 0$,
\formula{
 f(a) & \le \frac{e^{a \xi} \xi \laplace f(\xi) - m e^{-(b - a) \xi}}{1 - e^{-(b - a) \xi}} .
}
\end{proposition}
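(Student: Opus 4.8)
The plan is to mimic the proofs of Propositions~\ref{prop:ub} and~\ref{prop:ub2} by splitting the Laplace integral at $a$ and at $b$, and then bounding each piece. Specifically, I would write
\formula{
 \xi \laplace f(\xi) & = \int_0^a \xi e^{-\xi x} f(x) dx + \int_a^b \xi e^{-\xi x} f(x) dx + \int_b^\infty \xi e^{-\xi x} f(x) dx .
}
On $(0, b)$ the function $f$ is increasing and $f(x)$ is certainly nonnegative, so the first two integrals are bounded below by $f(a)$ times $\int_a^b \xi e^{-\xi x} dx = e^{-a\xi} - e^{-b\xi}$ (dropping the first integral entirely and using $f(x) \ge f(a)$ on $(a,b)$). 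For the tail integral over $(b,\infty)$ we use $f(x) \ge m$ there, giving a lower bound of $m \int_b^\infty \xi e^{-\xi x} dx = m e^{-b\xi}$. Hence
\formula{
 \xi \laplace f(\xi) & \ge f(a)(e^{-a\xi} - e^{-b\xi}) + m e^{-b\xi} .
}
Rearranging this inequality for $f(a)$ — note $e^{-a\xi} - e^{-b\xi} = e^{-a\xi}(1 - e^{-(b-a)\xi}) > 0$ since $b > a$ (the degenerate case $b = a$ making the claimed bound trivial or requiring a separate remark) — yields
\formula{
 f(a) & \le \frac{\xi \laplace f(\xi) - m e^{-b\xi}}{e^{-a\xi}(1 - e^{-(b-a)\xi})} = \frac{e^{a\xi} \xi \laplace f(\xi) - m e^{-(b-a)\xi}}{1 - e^{-(b-a)\xi}} ,
}
which is exactly the asserted estimate.

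There is essentially no obstacle here; the only things to be slightly careful about are: (i) when $b = a$ the denominator $1 - e^{-(b-a)\xi}$ vanishes, so implicitly the statement is meant for $b > a$ or the estimate should be read as vacuous/$+\infty$ on the right — I would simply assume $b > a$ in the argument and note the $b = a$ case separately if needed; (ii) one should check all three integrals converge, which follows from $f$ being increasing on $(0,b)$ (hence locally bounded there, and bounded by $f(b^-)$ near $b$) and from the exponential decay $e^{-\xi x}$ for the tail (even if $f$ grows, the hypothesis is only $f(x) \ge m$ for $x \ge b$, so a priori $\laplace f(\xi)$ could be infinite — in that case the inequality holds trivially). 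The whole proof fits in four displayed lines and parallels Proposition~\ref{prop:ub2} with the refinement that the tail contribution $m e^{-b\xi}$ is subtracted off rather than discarded.

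I would present it as: \textbf{Proof.} As in the proof of Proposition~\ref{prop:ub2}, split $\xi \laplace f(\xi)$ at $a$ and $b$; bound the first two terms below using monotonicity on $(0,b)$ and the third below using $f \ge m$ on $[b,\infty)$; this gives $\xi\laplace f(\xi) \ge f(a)(e^{-a\xi}-e^{-b\xi}) + m e^{-b\xi}$, and solving for $f(a)$ (assuming $b>a$, so the coefficient of $f(a)$ is positive) completes the proof. $\qed$
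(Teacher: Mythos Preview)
Your proof is correct and follows essentially the same approach as the paper: both arguments amount to the single inequality $\xi\laplace f(\xi)\ge f(a)(e^{-a\xi}-e^{-b\xi})+m e^{-b\xi}$, obtained by dropping the contribution on $(0,a)$, using $f(x)\ge f(a)$ on $(a,b)$, and $f(x)\ge m$ on $[b,\infty)$. The paper merely packages the first two steps as an application of Proposition~\ref{prop:ub} to the truncated function $f\ind_{(0,a)}+f(a)\ind_{[a,\infty)}$, but after unwinding that reference the computations are line-for-line the same as yours.
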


\begin{proof}
By Proposition~\ref{prop:ub} applied to $f(x) \ind_{(0, a)}(x) + f(a) \ind_{[a, \infty)}(x)$,
\formula{
 f(a) & \le e^{a \xi} \xi \expr{\int_0^\infty e^{-\xi x} f(x) dx + \int_a^\infty e^{-\xi x} (f(a) - f(x)) dx} \\
 & \le e^{a \xi} \xi \expr{\int_0^\infty e^{-\xi x} f(x) dx + \int_b^\infty e^{-\xi x} (f(a) - m) dx} \\
 & = e^{a \xi} \xi \laplace f(\xi) + e^{-(b - a) \xi} (f(a) - m) . \qedhere
}
\end{proof}

%
%                            ---------- o ----------
%

\subsection{Elements of fluctuation theory}
\label{sec:v}

In the sequel we will need the function $V(x)$, which is described by its Laplace transform, $\laplace V(\xi) = \xi / \psi^\dagger(\xi)$. By Proposition~\ref{prop:cbf:prop}(a) and Lemma~\ref{lem:duality}, $\xi / \psi^\dagger(\xi)$ is a complete Bernstein function. Hence, $V$ is a Bernstein function, i.e. $V$ is nonnegative and $V'$ is completely monotone (see~\cite{bib:ssv10}). By Theorem~4.4 in~\cite{bib:kmr11},
\formula{
 \frac{1}{5} \, \frac{1}{\sqrt{\psi(1/x^2)}} \le V(x) & \le 5 \, \frac{1}{\sqrt{\psi(1/x^2)}} \, , && x > 0 .
}
Suppose that $\psi$ extends to a function $\psi^+$ holomorphic in the upper complex half-plane and continuous in the region $\im \xi \ge 0$. Then by Proposition~4.5 in~\cite{bib:kmr11},
\formula{
 V(x) & = b x + \frac{1}{\pi} \int_{0^+}^\infty \im \expr{-\frac{1}{\psi^+(-\xi^2)}} \frac{\psi^\dagger(\xi)}{\xi} \, (1 - e^{-x \xi}) d\xi , && x > 0 ,
}
where $b = (\lim_{\xi \to 0^+} (\xi / \psi(\xi)))^{1/2}$.

Probabilistically, $V(x)$ is the renewal function for the ascending ladder-height process $H_s$ corresponding to $X_t$. When $\psi$ is unbounded, then $X_t$ satisfies the absolute continuity condition and $V(x)$ is the (unique up to a multiplicative constant) increasing harmonic function for $X_t$ on $(0, \infty)$, cf.~\cite{bib:s80}.

%
%                            ---------- o ----------
%

\section{Eigenfunction expansion}
\label{sec:pi2}

In this section we prove a peculiar integral identity, which (together with the results of~\cite{bib:mk10}) yields a short proof of Theorem~\ref{th:spectral}.

\begin{lemma}
\label{lem:piovertwo}
Let $\psi(\xi)$ be an arbitrary positive, continuously differentiable function on $(0, \infty)$ with strictly positive derivative, and such that $(\log (1 + \psi(\zeta^2))) / \zeta^2$ is integrable on $(0, \infty)$. Then for all $\xi_1, \xi_2 > 0$,
\formula[eq:piovertwo]{
 \int_0^\infty \frac{\lambda^4 \psi'(\lambda^2) (\xi_1 + \xi_2) \psi_\lambda^\dagger(\xi_1) \psi_\lambda^\dagger(\xi_2)}{\psi(\lambda^2) (\lambda^2 + \xi_1^2) (\lambda^2 + \xi_2^2)} \, d\lambda & = \frac{\pi}{2} \/ .
}
\end{lemma}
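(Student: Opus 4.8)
The plan is to recognize the left-hand side of~\eqref{eq:piovertwo} as (up to a constant) the value at $x = 0$ of a convolution-type expression built from the Laplace transforms $\laplace F_\lambda$, and to evaluate it by a residue/contour argument rather than by brute-force integration. More precisely, recall from~\eqref{eq:lf0} that $\laplace F_\lambda(\xi) = \frac{\lambda}{\lambda^2 + \xi^2} \, \frac{\psi_\lambda^\dagger(\xi)}{\sqrt{\psi_\lambda(\lambda^2)}}$, so that the integrand in~\eqref{eq:piovertwo} equals $\frac{\lambda^2 \psi'(\lambda^2)}{\psi(\lambda^2)} \, \psi_\lambda(\lambda^2) \, (\xi_1 + \xi_2) \laplace F_\lambda(\xi_1) \laplace F_\lambda(\xi_2)$; since $\psi_\lambda(\lambda^2) = \psi(\lambda^2)/(\lambda^2 \psi'(\lambda^2))$, the prefactor is simply $1$, and the claimed identity becomes
\formula{
 \int_0^\infty (\xi_1 + \xi_2) \, \laplace F_\lambda(\xi_1) \, \laplace F_\lambda(\xi_2) \, d\lambda & = \frac{\pi}{2} \, .
}
This is a statement purely about the family $\{F_\lambda\}$ and, morally, it is the ``$L^2$-orthogonality/completeness'' identity $\frac{2}{\pi}\int_0^\infty \laplace F_\lambda(\xi_1)\laplace F_\lambda(\xi_2)\,d\lambda = \laplace(\text{something symmetric})$ evaluated so as to collapse to a constant; however, since Theorem~\ref{th:spectral} is what we are trying to prove (and must not be assumed), we cannot invoke it and must argue directly.

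First I would establish the identity under a strong regularity hypothesis on $\psi$ — namely that $\psi$ extends holomorphically past $(-\infty,0]$ to a neighbourhood of the closed upper half-plane with the genericity condition $\psi^+(-\xi^2) \ne \psi(\lambda^2)$ — so that Lemma~\ref{lem:duality} applies to $\psi_\lambda$ and gives $\psi_\lambda^\dagger(\xi)\psi_\lambda^\dagger(-\xi) = \psi_\lambda(-\xi^2)$ for $\xi \in \C \setminus \R$, and so that $\psi_\lambda^\dagger(\xi)$ admits an explicit boundary behaviour on the imaginary axis. In this case I would fix $\xi_2 > 0$ and view the $\lambda$-integral of $(\xi_1+\xi_2)\laplace F_\lambda(\xi_1)\laplace F_\lambda(\xi_2)$ as an integral over $\lambda \in \R$ (the integrand being even in $\lambda$ after the appropriate continuation, since $\lambda^2$ is what appears), then push the contour of integration: the poles of $\laplace F_\lambda(\xi_1) = \frac{\lambda}{\lambda^2+\xi_1^2}\frac{\psi_\lambda^\dagger(\xi_1)}{\sqrt{\psi_\lambda(\lambda^2)}}$ and of $\laplace F_\lambda(\xi_2)$ at $\lambda = \pm i\xi_1$, $\lambda = \pm i\xi_2$ are the only singularities in the relevant half-plane, and the residues there — computed using $\psi_\lambda^\dagger(\xi)|_{\lambda=i\xi} $ together with the duality relation to simplify $\psi_\lambda(\lambda^2)$ at those points — should produce exactly $\pi/2$, all other contributions vanishing by a decay estimate at infinity (Corollary~\ref{cor:psilambdadaggerest} and Proposition~\ref{prop:cbf:ests} give $|\psi_\lambda^\dagger(\xi)| = O(|\xi|)$ and $\psi_\lambda^\dagger(\xi) = O(|\lambda|)$ as $|\lambda| \to \infty$, which makes the integrand $O(|\lambda|^{-2})$ and controls the arcs).

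Next I would remove the regularity hypothesis by an approximation argument. Given a general $\psi$ satisfying the integrability condition of the lemma, choose CBFs $\psi_n \to \psi$ pointwise on $(0,\infty)$ with each $\psi_n$ as regular as needed — for instance $\psi_n(\xi) = \psi(\xi) \cdot \frac{n\xi}{n\xi+1} + \tfrac1n \xi$ or a similar mollification making $\psi_n$ holomorphic in a neighbourhood of $[0,\infty)$ and strictly between two linear functions. By Proposition~\ref{prop:daggercontinuity} and Corollary~\ref{cor:psilambdacontinuity}, $(\psi_n)_\lambda^\dagger(\xi) \to \psi_\lambda^\dagger(\xi)$ locally uniformly, and by Corollary~\ref{cor:psilambdadaggerest} together with~\eqref{eq:psilambdaest3} the integrands are dominated uniformly in $n$ by an integrable function of $\lambda$ (the bound $\psi_\lambda^\dagger(\xi) \le 2\sqrt{\psi_\lambda(\xi^2)}$ and~\eqref{eq:psilambdaest3} give, after inserting into the original integrand, something like $C/(1+\lambda^2)$ times factors bounded independently of $n$ on compact $\xi_1,\xi_2$ ranges). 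Dominated convergence then transfers the identity from $\psi_n$ to $\psi$, and since the right-hand side is the constant $\pi/2$ independent of $n$, we are done; care is needed only to check the domination near $\lambda = 0$ and $\lambda = \infty$, but there the logarithmic-integrability assumption on $\psi$ and the standard bounds $c_1\min(1,\zeta^2) \le \psi_n(\zeta^2) \le c_2\max(1,\zeta^2)$ from the proof of Proposition~\ref{prop:daggercontinuity} suffice.

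The main obstacle I anticipate is the contour-shift/residue computation in the regular case: one must keep track of the multivalued factor $\sqrt{\psi_\lambda(\lambda^2)}$ and of $\psi_\lambda^\dagger$ as $\lambda$ ranges over a complex path, verify that no additional branch cuts are crossed (the zeros of $\psi(\lambda^2) - \psi(\xi^2)$ in $\psi_\lambda$ are exactly cancelled by the construction of $\psi_\lambda$, but this needs the genericity condition, which is why the approximation step is essential), and show the two pairs of residues combine to the clean value $\pi/2$ with all $\xi_1,\xi_2$-dependence cancelling. An alternative to the contour argument, which may be cleaner, is to compute the double Laplace transform in $\xi_1$ and $\xi_2$ of both sides against test exponentials and recognize the resulting expression via the Baxter–Donsker-type formula~\eqref{eq:ltau}; but that route risks circularity with~\cite{bib:kmr11}, so I would prefer the self-contained residue computation.
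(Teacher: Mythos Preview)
Your approach is genuinely different from the paper's, and the central step has a real gap.

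The paper does not integrate by contour in $\lambda$ at all. Instead it introduces, for each fixed $\xi > 0$,
\formula{
 \ph(\xi, z) = \exp\expr{-\frac{1}{\pi}\int_0^\infty \frac{\xi \log(1 + \psi(\zeta^2)/z)}{\xi^2 + \zeta^2}\,d\zeta}, \qquad z \in \C \setminus (-\infty, 0],
}
checks that $\Arg \ph(\xi,z) \in (0,\pi/2)$ when $\im z > 0$, and concludes via Proposition~\ref{prop:cbf:prop}(b) that $f(z) = \ph(\xi_1,z)\ph(\xi_2,z)$ is a complete Bernstein function \emph{of $z$}. Since $f(0^+) = 0$ and $f(\infty) = 1$, the representation~\eqref{eq:cbf} gives $\frac{1}{\pi}\int_0^\infty s^{-1} m(ds) = 1$ with $m(ds) = \im f^+(-s)\,ds$. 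Substituting $s = \psi(\lambda^2)$ and using the boundary value $\ph^+(\xi,-\psi(\lambda^2)) = \lambda(\lambda + i\xi)(\lambda^2+\xi^2)^{-1}\psi_\lambda^\dagger(\xi)$ from~\cite{bib:kmr11} yields the identity directly --- no approximation, and no assumption that $\psi$ be a CBF.

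Your contour-shift in $\lambda$ has a genuine obstacle that you only partly flag. To deform a contour in $\lambda$ you need $\psi_\lambda^\dagger(\xi_j)$ (and $\sqrt{\psi_\lambda(\lambda^2)}$) to extend holomorphically in $\lambda$ into a half-plane and you need to know \emph{all} singularities there. But $\psi_\lambda^\dagger(\xi)$ depends on $\lambda$ through $\psi(\lambda^2)$ sitting inside the logarithm of an integral over $\zeta \in (0,\infty)$; for complex $\lambda$ the quantity $\psi_\lambda(\zeta^2)$ need not stay in any half-plane where $\log$ is single-valued, so there is no reason the only singularities are the visible poles at $\lambda = \pm i\xi_j$, nor any mechanism producing the clean residue sum $\pi/2$. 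The paper's trick is precisely to make $z$ rather than $\lambda$ the complex variable, in which the relevant function is holomorphic by construction and its boundary measure can be read off from the CBF representation. Separately, your approximation step invokes CBF-specific continuity results (Proposition~\ref{prop:daggercontinuity}, Corollary~\ref{cor:psilambdadaggerest}), but the lemma is stated for an arbitrary positive increasing $\psi$ with the stated integrability; such $\psi$ cannot in general be approximated by CBFs, so the reduction to the regular case does not cover the full statement.
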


\begin{proof}
As in the proof of Theorem~4.1 in~\cite{bib:kmr11}, for $\xi > 0$ and $z \in \C \setminus (-\infty, 0]$ we define
\formula{
 \ph(\xi, z) & = \exp\expr{-\frac{1}{\pi} \int_0^\infty \frac{\xi \log (1 + \psi(\zeta^2) / z)}{\xi^2 + \zeta^2} \, d\zeta} ,
}
where $\log$ is the principal branch of the logarithm. Clearly, for each $\xi > 0$, $\ph(\xi, z)$ is a holomorphic function of $z$, and $\ph(\xi, z) > 0$ when $z > 0$. As it was observed in~\cite{bib:kmr11}, when $\im z > 0$, we have $\Arg (1 + \psi(\zeta^2) / z) \in (-\pi, 0)$, so that
\formula{
 \Arg \ph(\xi, z) & = -\frac{1}{\pi} \int_0^\infty \frac{\xi \Arg (1 + \psi(\zeta^2) / z)}{\xi^2 + \zeta^2} \, d\zeta \in (0, \pi/2) .
}
Hence, for any $\xi_1, \xi_2 > 0$, $\im (\ph(\xi_1, z) \ph(\xi_2, z)) > 0$ when $\im z > 0$. By Proposition~\ref{prop:cbf:prop}(b), $f(z) = \ph(\xi_1, z) \ph(\xi_2, z)$ is a CBF of $z$.

By monotone convergence, $f(z)$ converges to $0$ when $z \to 0^+$, and by dominated convergence, $\lim_{z \to \infty} f(z) = 1$. It follows that the constants $c_1$, $c_2$ in the representation~\eqref{eq:cbf} for the CBF $f(z)$ vanish. Furthermore, for any $\xi > 0$, $\ph(\xi, z)$ extends to a continuous function $\ph^+(\xi, z)$ in the region $\im z \ge 0$, given by the formula
\formula{
 \ph^+(\xi, z) & = \exp\expr{-\frac{1}{\pi} \int_0^\infty \frac{\xi \log^- (1 + \psi(\zeta^2) / z)}{\xi^2 + \zeta^2} \, d\zeta} ,
}
where $\log^-(z)$ is the continuous extension of $\log z$ to the region $\im z \le 0$. The measure $m$ in the representation~\eqref{eq:cbf} for the function $f(z) = \ph(\xi_1, z) \ph(\xi_2, z)$ is therefore given by $m(ds) = \pi^{-1} \im(\ph^+(\xi_1, -s) \ph^+(\xi_2, -s)) ds$ (Proposition~\ref{prop:cbf:repr}(a)). By~\eqref{eq:cbf} and monotone convergence,
\formula[eq:piovertwo0]{
\begin{aligned}
 \hspace*{2em} & \hspace*{-2em} \frac{1}{\pi} \int_0^\infty \frac{\im (\ph^+(\xi_1, -s) \ph^+(\xi_2, -s))}{s} \, ds \\
 & = \frac{1}{\pi} \, \lim_{z \to \infty} \int_0^\infty \frac{z \im (\ph^+(\xi_1, -s) \ph^+(\xi_2, -s))}{z + s} \, \frac{ds}{s} \\
 & = \lim_{z \to \infty} (\ph(\xi_1, z) \ph(\xi_2, z)) = 1 .
\end{aligned}
}
Note that if $\psi(\xi)$ is bounded on $(0, \infty)$ and $s \ge \sup_{\xi > 0} \psi(\xi)$, then $\ph^+(\xi, -s)$ is real, so that the integrand in~\eqref{eq:piovertwo0} vanishes for such $s$. Hence, we may substitute $\psi(\lambda^2)$ for $s$ in~\eqref{eq:piovertwo0} to obtain that
\formula{
 \frac{2}{\pi} \int_0^\infty \lambda \psi'(\lambda^2) \im \frac{\ph^+(\xi_1, -\psi(\lambda^2)) \ph^+(\xi_2, -\psi(\lambda^2))}{\psi(\lambda^2)} \, d\lambda & = 1 .
}
Finally, in the proof of Theorem~4.1 in~\cite{bib:kmr11} (formula~(4.3) therein) it is proved that
\formula{
 \ph^+(\xi, -\psi(\lambda^2)) & = \frac{\lambda (\lambda + \xi i) \psi_\lambda^\dagger(\xi)}{\lambda^2 + \xi^2} \, .
}
Since $\im ((\lambda + \xi_1 i) (\lambda + \xi_2 i)) = \lambda (\xi_1 + \xi_2)$, the lemma is proved.
\end{proof}

\begin{remark}
Lemma~\ref{lem:piovertwo} was conjectured in a preliminary version of~\cite{bib:mk10}.\qed
\end{remark}

\begin{proof}[Proof of Theorem~\ref{th:spectral}]
In~\cite{bib:mk10}, Theorem~\ref{th:spectral} was proved under an extra assumption that the operator $\Pi$ defined in~\eqref{eq:pistar} is injective. Below we show that this condition is always satisfied.

Let $e_\xi(x) = e^{-\xi x} \ind_\hl(x)$ ($\xi > 0$). By~\eqref{eq:lf0}, for $\lambda > 0$,
\formula{
 \Pi e_\xi(\lambda) = \int_0^\infty e^{-\xi x} F_\lambda(x) dx = \laplace F_\lambda(\xi) = \sqrt{\frac{\lambda^2 \psi'(\lambda^2)}{\psi(\lambda^2)}} \, \frac{\lambda}{\lambda^2 + \xi^2} \, \psi_\lambda^\dagger(\xi) .
}
By Lemma~\ref{lem:piovertwo},
\formula{
 \scalar{\Pi e_{\xi_1}, \Pi e_{\xi_2}}_{L^2(\hl)} & = \int_0^\infty \frac{\lambda^4 \psi'(\lambda^2) \psi_\lambda^\dagger(\xi_1) \psi_\lambda^\dagger(\xi_2)}{\psi(\lambda^2) (\lambda^2 + \xi_1^2) (\lambda^2 + \xi_2^2)} \, d\lambda \\
 & = \frac{\pi}{2 (\xi_1 + \xi_2)} = \frac{\pi}{2} \scalar{e_{\xi_1}, e_{\xi_2}}_{L^2(\hl)} .
}
The functions $e_\xi$ ($\xi > 0$) form a linearly dense set in $L^2((0, \infty))$. By approximation, $\sqrt{2 / \pi} \, \Pi$ is a unitary operator on $L^2(\hl)$. In particular, $\Pi$ is injective.
\end{proof}

%
%                            ---------- o ----------
%

\section{Estimates of $\thet_\lambda$}
\label{sec:theta}

As it will become clear in the next section, upper bounds for the phase shift $\thet_\lambda$ (which is defined by~\eqref{eq:theta}) are crucial for applications of Theorems~\ref{th:eigenfunctions} and~\ref{th:spectral}: when $\thet_\lambda$ is close to $\pi/2$, estimates of the eigenfunctions $F_\lambda(x)$ are problematic. Recall that by~\eqref{eq:theta} and~\eqref{eq:theta0},
\formula[eq:theta1]{
 \thet_\lambda & = \Arg \psi_\lambda^\dagger(i \lambda) = -\frac{1}{\pi} \int_0^\infty \frac{1}{1 - \zeta^2} \, \log \frac{\psi_\lambda(\lambda^2 \zeta^2)}{\psi_\lambda(\lambda^2)} \, d\zeta , && \lambda > 0 .
}
Here $\psi(\xi)$ is a complete Bernstein function in the L\'evy-Khintchine exponent of $X_t$. By a substitution $\zeta = 1 / s$ in the integral over $(1, \infty)$, one obtains that (see Proposition~4.16 in~\cite{bib:mk10})
\formula[eq:theta2]{
 \thet_\lambda & = \frac{1}{\pi} \int_0^1 \frac{1}{1 - \zeta^2} \, \log \frac{\psi_\lambda(\lambda^2 / \zeta^2)}{\psi_\lambda(\lambda^2 \zeta^2)} \, d\zeta , && \lambda > 0 .
}
When $\psi(\xi) = \xi^{\alpha/2}$ ($0 < \alpha \le 2)$, that is, when $X_t$ is the symmetric $\alpha$-stable process, we have $\thet_\lambda = (2 - \alpha) \pi/8$ (Example~6.1 in \cite{bib:mk10}). In the general case, $0 \le \thet_\lambda < \pi/2$, and $\thet_\lambda \le (\pi/2) \sup_{\xi > 0} (\xi \psi_\lambda'(\xi) / \psi_\lambda(\xi))$ (Proposition~4.18 in \cite{bib:mk10}). This estimate is insufficient for our needs, and it is significantly improved below. By~\eqref{eq:theta1} and Proposition~\ref{prop:psilambdaest}, we have the following monotonicity result. Note that $\psi$ need not be a complete Bernstein function in Propositions~\ref{prop:thetamonotone} and~\ref{prop:int}.

\begin{proposition}
\label{prop:thetamonotone}
Suppose that $\psi$ and $\tilde{\psi}$ are arbitrary twice-differentiable, increasing and nonvanishing functions (not necessarily complete Bernstein functions). Define $\thet_\lambda$ by~\eqref{eq:theta2}, and define $\tilde{\thet}_\lambda$ in a similar way, using $\tilde{\psi}$. We assume that the integrals in the definitions of $\thet_\lambda$ and $\tilde{\thet}_\lambda$ are convergent and finite. If $-\psi''(\xi) / \psi'(\xi) \le -\tilde{\psi}''(\xi) / \tilde{\psi}'(\xi)$ for all $\xi > 0$, then $\thet_\lambda \le \tilde{\thet}_\lambda$ for all $\lambda > 0$. \qed
\end{proposition}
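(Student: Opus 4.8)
The plan is to derive the inequality directly from the integral representation~\eqref{eq:theta2} of $\thet_\lambda$ (and the analogous one for $\tilde\thet_\lambda$) together with the comparison of the auxiliary functions $\psi_\lambda$ and $\tilde{\psi}_\lambda$ furnished by Proposition~\ref{prop:psilambdaest}, whose hypothesis $-\psi''/\psi' \le -\tilde{\psi}''/\tilde{\psi}'$ is precisely the assumption made here. Since the weight $1/(1 - \zeta^2)$ is positive on $(0, 1)$, it will be enough to prove, for every $\lambda > 0$ and every $\zeta \in (0, 1)$, the pointwise bound
\formula{
 \log \frac{\psi_\lambda(\lambda^2 / \zeta^2)}{\psi_\lambda(\lambda^2 \zeta^2)} & \le \log \frac{\tilde{\psi}_\lambda(\lambda^2 / \zeta^2)}{\tilde{\psi}_\lambda(\lambda^2 \zeta^2)} ,
}
and then integrate over $(0, 1)$, using that both integrals are assumed to be convergent and finite.

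To get the pointwise bound I would first write each side as a difference of two logarithms taken relative to the value at $\lambda^2$, namely $\log(\psi_\lambda(\lambda^2/\zeta^2)/\psi_\lambda(\lambda^2)) - \log(\psi_\lambda(\lambda^2\zeta^2)/\psi_\lambda(\lambda^2))$; this splitting is legitimate because $\psi$, being increasing and nonvanishing, has constant sign, hence so does $\psi_\lambda$ by~\eqref{eq:psilambda}, so all three quantities under the logarithms are positive. Then I would apply Proposition~\ref{prop:psilambdaest}: taking $\xi = \lambda/\zeta > \lambda$ in its first inequality gives $\psi_\lambda(\lambda^2/\zeta^2)/\psi_\lambda(\lambda^2) \le \tilde{\psi}_\lambda(\lambda^2/\zeta^2)/\tilde{\psi}_\lambda(\lambda^2)$, while taking $\xi = \lambda\zeta < \lambda$ in its second inequality gives $\psi_\lambda(\lambda^2\zeta^2)/\psi_\lambda(\lambda^2) \ge \tilde{\psi}_\lambda(\lambda^2\zeta^2)/\tilde{\psi}_\lambda(\lambda^2)$. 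Passing to logarithms and subtracting the second relation from the first, the $\log\psi_\lambda(\lambda^2)$ and $\log\tilde{\psi}_\lambda(\lambda^2)$ terms cancel and the displayed pointwise inequality drops out; multiplying by $1/(\pi(1 - \zeta^2))$ and integrating over $(0,1)$ yields $\thet_\lambda \le \tilde\thet_\lambda$.

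I do not expect a genuine obstacle here: once Proposition~\ref{prop:psilambdaest} is available the argument is pure bookkeeping, and in particular no appeal to the complete Bernstein property is made, consistently with the stated generality. The only points needing a moment's care are checking the signs so that the logarithms and their additive splitting are meaningful, and matching the two regimes $\xi > \lambda$ and $\xi < \lambda$ in Proposition~\ref{prop:psilambdaest} with the two arguments $\lambda^2/\zeta^2$ and $\lambda^2\zeta^2$ that occur in~\eqref{eq:theta2}.
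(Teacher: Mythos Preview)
Your proposal is correct and is exactly the argument the paper has in mind: the proposition is stated with a \qed\ and is preceded by the sentence ``By~\eqref{eq:theta1} and Proposition~\ref{prop:psilambdaest}, we have the following monotonicity result,'' so the intended proof is precisely to feed the two inequalities of Proposition~\ref{prop:psilambdaest} into the integral representation of $\thet_\lambda$. Your choice to work with~\eqref{eq:theta2} rather than~\eqref{eq:theta1} is a harmless variant (the weight $1/(1-\zeta^2)$ is positive on $(0,1)$, so the pointwise comparison translates directly), and your remarks on the sign of $\psi_\lambda$ and on matching the regimes $\xi>\lambda$ and $\xi<\lambda$ fill in exactly the bookkeeping the paper omits.
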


Taking $\tilde{\psi}(\xi) = \xi^{\alpha/2}$ (corresponding to the symmetric $\alpha$-stable process) and using the fact that $\tilde{\thet}_\lambda = (2 - \alpha) \pi / 8$, we obtain a global estimate for $\thet_\lambda$. It turns out that although this method works for $\alpha \in (0, 2]$, it generalises easily to $\alpha \in [-2, 2]$. First, we recall the following result, which was given in Example~6.1 in~\cite{bib:mk10} for $\ro \in (0, 1]$; the proof, however, works for any $\ro \ne 0$.

\begin{proposition}[see Example~6.1 in~\cite{bib:mk10}]
\label{prop:int}
Let $\ro \ne 0$ and $\psi(\xi) = \xi^\ro$ if $\ro > 0$, $\psi(\xi) = -\xi^\ro$ if $\ro < 0$. Define $\thet_\lambda$ by~\eqref{eq:theta2}. Then $\thet_\lambda = (1 - \ro) \pi / 4$.\qed
\end{proposition}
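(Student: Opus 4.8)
The plan is to compute the integral in~\eqref{eq:theta2} explicitly, exploiting the scaling invariance of the power function. For $\psi(\xi)=\xi^\ro$ with $\ro>0$ one has $\psi(\lambda^2 t)/\psi(\lambda^2)=t^\ro$ for all $\lambda,t>0$, and the same identity holds for $\psi(\xi)=-\xi^\ro$ with $\ro<0$, since the two signs cancel; note that in this second case $\psi$ is genuinely increasing and nonvanishing on $\hl$, so~\eqref{eq:theta2} still makes sense. Plugging this into~\eqref{eq:psilambda} I would get, for $0<\zeta<1$,
\[
 \psi_\lambda(\lambda^2\zeta^2)=\frac{1-\zeta^2}{1-\zeta^{2\ro}}\,,\qquad \psi_\lambda(\lambda^2/\zeta^2)=\frac{1-\zeta^{-2}}{1-\zeta^{-2\ro}}\,,
\]
both independent of $\lambda$. (Conceptually this is why $\thet_\lambda$ will not depend on $\lambda$: for a power function, $\psi_\lambda(\xi)=h(\xi/\lambda^2)$ with $h$ a fixed function, and a change of variables in~\eqref{eq:dagger} then gives $\psi_\lambda^\dagger(i\lambda)=h^\dagger(i)$, independent of $\lambda$.)

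Next I would simplify the quotient appearing in~\eqref{eq:theta2}. Writing $1-\zeta^{-2}=-\zeta^{-2}(1-\zeta^2)$ and $1-\zeta^{-2\ro}=-\zeta^{-2\ro}(1-\zeta^{2\ro})$, the factors $1-\zeta^2$ and $1-\zeta^{2\ro}$ cancel and one is left with
\[
 \frac{\psi_\lambda(\lambda^2/\zeta^2)}{\psi_\lambda(\lambda^2\zeta^2)}=\zeta^{2\ro-2}\,,\qquad 0<\zeta<1\,.
\]
Substituting this into~\eqref{eq:theta2},
\[
 \thet_\lambda=\frac{2(\ro-1)}{\pi}\int_0^1\frac{\log\zeta}{1-\zeta^2}\,d\zeta\,.
\]
The integrand is $O(\log\zeta)$ near $\zeta=0^+$ and stays bounded as $\zeta\to1^-$ (since $\log\zeta\sim-(1-\zeta)$ while $1-\zeta^2\sim2(1-\zeta)$), so the integral converges; in particular $\thet_\lambda$ is finite and $\lambda$-independent.

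Finally I would evaluate the classical integral: expanding $(1-\zeta^2)^{-1}=\sum_{k\ge0}\zeta^{2k}$ and integrating term by term (legitimate by monotone convergence, all summands having the same sign), using $\int_0^1\zeta^{2k}\log\zeta\,d\zeta=-(2k+1)^{-2}$, gives $\int_0^1(\log\zeta)/(1-\zeta^2)\,d\zeta=-\sum_{k\ge0}(2k+1)^{-2}=-\pi^2/8$. Hence $\thet_\lambda=\tfrac{2(\ro-1)}{\pi}\cdot(-\pi^2/8)=(1-\ro)\pi/4$, which also matches the value $(2-\alpha)\pi/8$ recorded earlier for $\ro=\alpha/2$. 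There is no genuine obstacle here; the only points needing a moment's care are the sign cancellation in the case $\ro<0$ and the convergence of the integral near $\zeta=1$.
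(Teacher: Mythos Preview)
Your proof is correct. The paper itself does not supply a proof of this proposition: it merely cites Example~6.1 in~\cite{bib:mk10} and notes that the argument there, originally stated for $\ro\in(0,1]$, goes through for all $\ro\ne 0$. Your direct computation---using the scaling $\psi(\lambda^2 t)/\psi(\lambda^2)=t^\ro$ to reduce the ratio in~\eqref{eq:theta2} to $\zeta^{2\ro-2}$ and then evaluating $\int_0^1 (1-\zeta^2)^{-1}\log\zeta\,d\zeta=-\pi^2/8$---is exactly the natural route and matches the spirit of the reference. The care you take with the sign cancellation for $\ro<0$ and with convergence near $\zeta=1$ is appropriate and sufficient.
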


From now on, $\psi$ is again the complete Bernstein function such that $\Psi(\xi) = \psi(\xi^2)$ is the L\'evy-Khintchine exponent of the process $X_t$.

\begin{proposition}
\label{prop:thetaest2}
We have
\formula[eq:thetaest2]{
 \expr{\inf_{\xi > 0} \frac{\xi |\psi''(\xi)|}{\psi'(\xi)}} \frac{\pi}{4} & \le \thet_\lambda \le \expr{\sup_{\xi > 0} \frac{\xi |\psi''(\xi)|}{\psi'(\xi)}} \frac{\pi}{4} \/ .
}
\end{proposition}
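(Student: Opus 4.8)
The plan is to reduce Proposition~\ref{prop:thetaest2} to Proposition~\ref{prop:int} by a direct comparison of the curvature quantity $-\psi''/\psi'$ for $\psi$ with that of a suitable power function, then invoke the monotonicity result of Proposition~\ref{prop:thetamonotone}. The key observation is that for the power function $\tilde\psi(\xi) = \xi^\ro$ ($\ro > 0$) one has $-\xi\tilde\psi''(\xi)/\tilde\psi'(\xi) = 1 - \ro$, a constant, and Proposition~\ref{prop:int} gives $\tilde\thet_\lambda = (1-\ro)\pi/4$ for all $\lambda$. Since $\psi$ is a complete Bernstein function, Proposition~\ref{prop:cbf:ests}(b) ensures $-\xi\psi''(\xi)/\psi'(\xi) \in [0,2]$, so $\xi|\psi''(\xi)|/\psi'(\xi) = -\xi\psi''(\xi)/\psi'(\xi)$.

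First I would set $\ro_- = 1 - \sup_{\xi>0}(\xi|\psi''(\xi)|/\psi'(\xi))$ and $\ro_+ = 1 - \inf_{\xi>0}(\xi|\psi''(\xi)|/\psi'(\xi))$, so that $\ro_- \le \ro_+$ and both lie in $[-1,1]$. For the upper bound in~\eqref{eq:thetaest2}: if $\ro_- > 0$, take $\tilde\psi(\xi) = \xi^{\ro_-}$, for which $-\tilde\psi''/\tilde\psi' = (1-\ro_-)/\xi$ pointwise equals the supremum, hence $-\psi''(\xi)/\psi'(\xi) \le -\tilde\psi''(\xi)/\tilde\psi'(\xi)$ for all $\xi>0$; Proposition~\ref{prop:thetamonotone} then yields $\thet_\lambda \le \tilde\thet_\lambda = (1-\ro_-)\pi/4 = \bigl(\sup_\xi \xi|\psi''(\xi)|/\psi'(\xi)\bigr)\pi/4$. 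If $\ro_- \le 0$, one instead uses the negative power $\tilde\psi(\xi) = -\xi^{\ro_-}$ (with $\ro_- \in [-1,0)$; the case $\ro_-=0$ is handled by a limiting argument or by noting $\tilde\psi = \log$, or simply by taking $\ro_-$ slightly negative and letting it tend to $0$), which is exactly the reason Proposition~\ref{prop:int} and the paragraph preceding it allow $\ro \ne 0$ of either sign; the curvature identity $-\xi\tilde\psi''(\xi)/\tilde\psi'(\xi) = 1-\ro_-$ still holds, Proposition~\ref{prop:thetamonotone} applies since it does not require complete Bernstein functions, and the integral defining $\tilde\thet_\lambda$ converges (Proposition~\ref{prop:int}), giving the same conclusion. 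The lower bound is symmetric: with $\ro_+ \le 1$, compare against $\tilde\psi(\xi) = \xi^{\ro_+}$ (or $-\xi^{\ro_+}$ if $\ro_+ \le 0$), whose curvature $1-\ro_+$ equals the infimum, so now $-\tilde\psi''/\tilde\psi' \le -\psi''/\psi'$ and Proposition~\ref{prop:thetamonotone} gives $\thet_\lambda \ge \tilde\thet_\lambda = (1-\ro_+)\pi/4 = \bigl(\inf_\xi \xi|\psi''(\xi)|/\psi'(\xi)\bigr)\pi/4$.

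The main obstacle I anticipate is the boundary behaviour when $\ro_-$ or $\ro_+$ equals $0$ (so the comparison function is $\pm\log\xi$, which is neither a positive power nor covered verbatim by Proposition~\ref{prop:int}) or when they hit $\pm 1$; in those degenerate cases one must either extend Proposition~\ref{prop:int} by a short direct computation of $\thet_\lambda$ for $\tilde\psi = \pm\log$, or — cleaner — approximate: for the upper bound replace $\ro_-$ by any $\ro < \min(\ro_-, 0)$ close to it, note $-\psi''/\psi' \le (1-\ro)/\xi$ still holds (since $1-\ro > 1-\ro_- = \sup$), conclude $\thet_\lambda \le (1-\ro)\pi/4$, and let $\ro \uparrow \ro_-$. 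A secondary technical point is verifying that the integral in~\eqref{eq:theta2} converges for $\psi$ — but this is already guaranteed because $\psi$ is a complete Bernstein function in the Lévy–Khintchine exponent and $\thet_\lambda$ was shown in Theorem~\ref{th:eigenfunctions} to be well-defined and in $[0,\pi/2)$, and for the comparison functions convergence is part of Proposition~\ref{prop:int}. Finally, I would record explicitly that the substitution of $|\psi''|$ for $-\psi''$ is legitimate via Proposition~\ref{prop:cbf:ests}(b), so the stated form of~\eqref{eq:thetaest2} with absolute values matches the comparison carried out with $-\psi''/\psi'$.
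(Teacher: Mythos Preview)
Your proposal is correct and follows essentially the same route as the paper: compare $-\psi''/\psi'$ against the constant curvature $(1-\ro)/\xi$ of the power function $\pm\xi^\ro$, invoke Proposition~\ref{prop:thetamonotone} and Proposition~\ref{prop:int}, and treat the degenerate case $\ro=0$ by approximation with $\tilde\psi(\xi)=-\xi^{-\eps}$ and $\eps\to 0^+$. The paper's proof is slightly terser but identical in substance.
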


\begin{proof}
Denote the supremum in~\eqref{eq:thetaest2} by $1 - \ro$. By Proposition~\ref{prop:cbf:ests}(b), $\ro \in [-1, 1]$. Suppose that $\ro$ is non-zero, and let $\tilde{\psi}(\xi) = \xi^\ro$ if $\ro > 0$, $\tilde{\psi}(\xi) = -\xi^\ro$ if $\ro < 0$. Observe that $\tilde{\psi}$ is increasing and $-\xi \tilde{\psi}''(\xi) / \tilde{\psi}'(\xi) = (1 - \ro)$, so that
\formula{
 \frac{-\psi''(\zeta)}{\psi'(\zeta)} & \le \frac{-\tilde{\psi}''(\zeta)}{\tilde{\psi}'(\zeta)} \, , && \zeta > 0 .
}
The upper bound in~\eqref{eq:thetaest2} follows by Propositions~\ref{prop:thetamonotone} and~\ref{prop:int}. When $\ro = 0$, we simply consider $\tilde{\psi}(\xi) = -\xi^{-\eps}$ and let $\eps \to 0^+$. Finally, the lower bound is proved in a similar manner.
\end{proof}

We conclude this section with a \emph{local} estimate of $\thet_\lambda$, which depends solely on $\psi(\lambda^2)$, $\psi'(\lambda^2)$ and $\psi''(\lambda^2)$. This result is used only in the construction of an irregular example in Subsection~\ref{subsec:irregular}. We need the following technical result.

\begin{proposition}
\label{prop:integral}
For $a > 0$,
\formula{
 -\frac{1}{\pi} \int_0^\infty \frac{1}{1 - x^2} \, \log \frac{1 + a^2 x^2}{1 + a^2} \, dx & = \arctan a .
}
For $b \in (0, 1)$,
\formula{
 -\frac{1}{\pi} \int_0^1 \frac{1}{1 - x^2} \, \log (1 - b^2 (1 - x^2)) dx & = \frac{(\arcsin b)^2}{\pi} .
}
\end{proposition}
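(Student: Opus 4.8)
The plan is to prove both identities by differentiating the left-hand side with respect to the parameter ($a$ in the first identity, $b$ in the second), which turns each into an elementary integral, and then integrating back using the obvious value $0$ of both sides at the endpoint parameter.

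For the first identity, write $I(a)$ for the left-hand side. Near $x=1$ the integrand is bounded, since $\log\frac{1+a^2x^2}{1+a^2}$ has a simple zero there, and near $x=\infty$ it is $O(x^{-2}\log x)$, so $I$ is finite and (by a mean-value-theorem bound on the difference quotient, dominated by an integrable function locally uniformly in $a$) differentiable, with
\formula{
 I'(a) & = -\frac{1}{\pi} \int_0^\infty \frac{1}{1-x^2} \expr{\frac{2a x^2}{1 + a^2 x^2} - \frac{2a}{1+a^2}} dx .
}
The key algebraic point is that the bracket equals $\dfrac{2a(x^2 - 1)}{(1 + a^2 x^2)(1 + a^2)}$, so the factor $x^2-1$ cancels the singular denominator $1-x^2$ and leaves
\formula{
 I'(a) & = \frac{2a}{\pi(1+a^2)} \int_0^\infty \frac{dx}{1 + a^2 x^2} = \frac{2a}{\pi(1+a^2)} \cdot \frac{\pi}{2a} = \frac{1}{1 + a^2} .
}
Since $I(0) = 0$, integration gives $I(a) = \arctan a$.

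For the second identity, write $J(b)$ for the left-hand side; here $1 - b^2(1-x^2) = (1-b^2) + b^2 x^2 \in [1-b^2, 1]$ for $b \in (0,1)$, $x \in [0,1]$, so the logarithm is well defined and, as before, the integrand is bounded near $x=1$. Differentiating under the integral sign,
\formula{
 J'(b) & = \frac{2b}{\pi} \int_0^1 \frac{dx}{(1-b^2) + b^2 x^2} = \frac{2b}{\pi} \cdot \frac{1}{b\sqrt{1-b^2}} \, \arctan\frac{b}{\sqrt{1-b^2}} = \frac{2 \arcsin b}{\pi \sqrt{1-b^2}} ,
}
using $\arctan(b/\sqrt{1-b^2}) = \arcsin b$. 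Since this is exactly $\frac{d}{db}\bigl((\arcsin b)^2/\pi\bigr)$ and $J(0) = 0$, we conclude $J(b) = (\arcsin b)^2/\pi$.

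The only point requiring care is the justification of differentiation under the integral sign, which is the main (though routine) obstacle: one must check that the difference quotients of the integrands are dominated by a fixed integrable function as the parameter ranges over a compact subset of $(0,\infty)$ (resp. $(0,1)$), with particular attention to the neighbourhood of $x=1$, where the differentiated integrand remains bounded, and — in the first identity — to the behaviour as $x \to \infty$, where it is $O(x^{-2})$. Everything else is an elementary computation.
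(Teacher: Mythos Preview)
Your proof is correct and takes a genuinely different route from the paper's. The paper proves the first identity by contour integration: it views $\log\frac{1+a^2x^2}{1+a^2}$ as holomorphic in the upper half-plane with a branch cut on $(ia^{-1},i\infty)$, deforms the real-line integral onto the cut, and obtains $-2\pi\int_{1/a}^\infty \frac{dy}{1+y^2}=-2\pi\arctan a$. For the second identity the paper expands $-\log(1-u)$ as a power series, integrates term by term using the beta integral $\int_0^1 (1-x^2)^{n-1}\,dx = \tfrac{1}{2}B(n,1/2)$, and then recognises the resulting series $\sum_{n\ge 1}\frac{\Gamma(n)\Gamma(1/2)}{2n\,\Gamma(n+1/2)}\,b^{2n}$ as $(\arcsin b)^2$.

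Your parameter-differentiation argument is more elementary and more uniform: the same trick handles both identities, no complex analysis is needed, and you avoid having to recognise the $(\arcsin b)^2$ series. The cancellation of the $1-x^2$ factor after differentiating is the decisive observation, and it makes the dominated-convergence justification near $x=1$ trivial (the differentiated integrand is actually continuous there). The paper's contour approach, by contrast, ties the first identity to the analytic structure of $\psi_\lambda^\dagger$ used elsewhere in the article, which is perhaps why the authors chose it; but as a stand-alone proof your method is cleaner.
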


\begin{proof}
The function $\log((1 + a^2 x^2) / (1 + a^2))$ is holomorphic in the upper half-plane with a branch cut along $(i a^{-1}, i \infty)$. By an appropriate contour integration and limit procedure (we omit the details),
\formula{
 & 2 \int_0^\infty \frac{1}{1 - x^2} \, \log \frac{1 + a^2 x^2}{1 + a^2} \, dx = \int_{-\infty}^\infty \frac{1}{1 - x^2} \, \log \frac{1 + a^2 x^2}{1 + a^2} \, dx \\
 & \qquad = \int_{i a^{-1}}^{i \infty} \frac{2 \pi i}{1 - x^2} \, dx = -2 \pi \int_{1/a}^\infty \frac{1}{1 + y^2} \, dy = -2 \pi \arctan a .
}
For the second equality, we have by Taylor expansion and beta integral,
\formula{
 & -\int_0^\infty \frac{1}{1 - x^2} \, \log (1 - b^2 (1 - x^2)) dx = \sum_{n = 1}^\infty \frac{b^{2n}}{n} \int_0^1 (1 - x^2)^{n-1} dx \\
 & \qquad = \sum_{n = 1}^\infty \frac{b^{2n}}{2 n} \int_0^1 (1 - s)^{n-1} s^{-1/2} ds = \sum_{n = 1}^\infty \frac{\Gamma(n) \Gamma(1/2) b^{2n}}{2 n \Gamma(n + 1/2)} = (\arcsin b)^2 ;
}
see e.g.~p.~108 in~\cite{bib:m73} for the last identity.
\end{proof}

\begin{proposition}
\label{prop:thetaest1}
We have
\formula{
 \thet_\lambda & \le \frac{\pi}{2} - \arcsin \sqrt{\frac{\lambda^2 \psi'(\lambda^2)}{\psi(\lambda^2)}}
}
and
\formula{
 \thet_\lambda & \ge \expr{\arcsin \sqrt{\frac{\lambda^2 |\psi''(\lambda^2)|}{2 \psi'(\lambda^2)}}}^2 + \expr{\arcsin \sqrt{\frac{\lambda^2 \psi(\lambda^2) |\psi''(\lambda^2)|}{2 \psi'(\lambda^2) (\psi(\lambda^2) - \lambda^2 \psi'(\lambda^2))}}}^2 \\
 & \hspace*{5em} - \expr{\arcsin \sqrt{\frac{\lambda^4 |\psi''(\lambda^2)|}{2 (\psi(\lambda^2) - \lambda^2 \psi'(\lambda^2))}}}^2 .
}
\end{proposition}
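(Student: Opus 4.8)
The plan is to treat the two bounds independently: the upper bound by a soft argument about the transform $\psi\mapsto\psi^\dagger$, the lower bound by a direct estimate of the integral in \eqref{eq:theta1}, reduced to two pointwise inequalities for $\psi$ that are then fed into Proposition~\ref{prop:integral}. \emph{Upper bound.} By \eqref{eq:theta0}, $\thet_\lambda=\Arg\psi_\lambda^\dagger(i\lambda)$, and $\psi_\lambda^\dagger$ is a CBF (Lemma~\ref{lem:duality}, since $\psi_\lambda$ is a CBF). By Lemma~\ref{lem:duality}, $\psi_\lambda^\dagger(i\lambda)\psi_\lambda^\dagger(-i\lambda)=\psi_\lambda(\lambda^2)$; as $\psi_\lambda^\dagger$ is real on $\hl$ the left-hand side is $|\psi_\lambda^\dagger(i\lambda)|^2$, so $\re\psi_\lambda^\dagger(i\lambda)=\sqrt{\psi_\lambda(\lambda^2)}\,\cos\thet_\lambda$. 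Taking real parts in the representation \eqref{eq:cbf} of the CBF $\psi_\lambda^\dagger$ at $z=i\lambda$, the $c_2z$ term is purely imaginary and the integral contributes nonnegatively to the real part, so $\re\psi_\lambda^\dagger(i\lambda)\ge\psi_\lambda^\dagger(0^+)$; by Proposition~\ref{prop:dagger:prop}(d) this equals $\sqrt{\psi_\lambda(0^+)}$, and $\psi_\lambda(0^+)=(1-\psi(0^+)/\psi(\lambda^2))^{-1}\ge 1$ directly from \eqref{eq:psilambda}. Hence $\cos\thet_\lambda\ge\psi_\lambda(\lambda^2)^{-1/2}=(\lambda^2\psi'(\lambda^2)/\psi(\lambda^2))^{1/2}$, and since $\thet_\lambda\in[0,\pi/2)$ this is exactly $\thet_\lambda\le\frac{\pi}{2}-\arcsin\sqrt{\lambda^2\psi'(\lambda^2)/\psi(\lambda^2)}$.

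\emph{Lower bound.} I split the integral \eqref{eq:theta1} at $\zeta=1$ and substitute $\zeta\mapsto1/\zeta$ on $(1,\infty)$, obtaining $\thet_\lambda=A_\lambda+B_\lambda$ with $A_\lambda=-\frac1\pi\int_0^1(1-\zeta^2)^{-1}\log\frac{\psi_\lambda(\lambda^2\zeta^2)}{\psi_\lambda(\lambda^2)}\,d\zeta$ and $B_\lambda=\frac1\pi\int_0^1(1-\zeta^2)^{-1}\log\frac{\psi_\lambda(\lambda^2/\zeta^2)}{\psi_\lambda(\lambda^2)}\,d\zeta$, both integrands nonnegative on $(0,1)$. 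With $p=\lambda^2\psi'(\lambda^2)/\psi(\lambda^2)$, $q=\lambda^2|\psi''(\lambda^2)|/(2\psi'(\lambda^2))$ and $\beta=q/(1-p)$, the heart of the matter is to establish, for $\zeta\in(0,1)$,
\[
  \frac{\psi_\lambda(\lambda^2\zeta^2)}{\psi_\lambda(\lambda^2)}\le 1-q(1-\zeta^2),\qquad
  \frac{\psi_\lambda(\lambda^2/\zeta^2)}{\psi_\lambda(\lambda^2)}\ge\frac{1-p\beta(1-\zeta^2)}{1-\beta(1-\zeta^2)}.
\]
Given these, taking logarithms and using monotonicity gives $A_\lambda\ge-\frac1\pi\int_0^1(1-\zeta^2)^{-1}\log(1-q(1-\zeta^2))\,d\zeta$ and $B_\lambda\ge\frac1\pi\int_0^1(1-\zeta^2)^{-1}[\log(1-p\beta(1-\zeta^2))-\log(1-\beta(1-\zeta^2))]\,d\zeta$; the second identity of Proposition~\ref{prop:integral} (with $b^2$ equal successively to $q$, to $p\beta$, to $\beta$) evaluates these to $\frac1\pi(\arcsin\sqrt q)^2$ and $\frac1\pi[(\arcsin\sqrt\beta)^2-(\arcsin\sqrt{p\beta})^2]$, whose sum is the asserted lower bound (note $\arcsin\sqrt\beta=\arcsin\sqrt{q/(1-p)}$ and $\arcsin\sqrt{p\beta}=\arcsin\sqrt{pq/(1-p)}$). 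Here one also needs $q\le 1$ (Proposition~\ref{prop:cbf:ests}(b)) and $\beta\le 1$, i.e.\ $q\le 1-p$; the latter follows, through \eqref{eq:cbf}, from a Cauchy--Schwarz (variance) inequality for the measure $m$.

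\emph{The two pointwise inequalities.} Clearing denominators and writing $a=\lambda^2$, they amount to
\[
  \psi(a)-\psi(z)\ge\frac{a(a-z)\psi'(a)}{(1-q)a+qz}\quad\text{for }0<z<a,\qquad
  \psi(z)-\psi(a)\le(z-a)\psi'(a)\,\frac{(1-\beta)z+\beta a}{(1-p\beta)z+p\beta a}\quad\text{for }z>a,
\]
both equalities for $\psi(z)=cz/(z+\gamma)$. The first I would prove by expanding both sides in powers of $a-z$ — legitimate on $(0,a)$ since $\psi$ is holomorphic off $(-\infty,0]$ — which reduces the coefficientwise comparison to $|\psi^{(k+1)}(a)|\,\psi'(a)^{k-1}\ge 2^{-k}(k+1)!\,(-\psi''(a))^k$ for $k\ge 0$, precisely Hölder's inequality for the moments $\frac1\pi\int m(ds)/(a+s)^{j+1}$ furnished by \eqref{eq:cbf}. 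The second inequality is the main obstacle: the power series in $z-a$ converges only for $z<2a$, so the analogous (again moment-theoretic, again sharp on $cz/(z+\gamma)$, for which $\beta=1$) coefficient comparison covers only $a<z<2a$, and for $z\ge 2a$ one must instead control $\psi$ globally — using that $\psi$ is concave, that $\psi(z)/z$ is a Stieltjes function and $\psi'$ is completely monotone, and that $z^2\psi'(z)$ and $z^3(-\psi''(z))$ are increasing — so as to keep $(\psi(z)-\psi(a))/(z-a)$ below the prescribed rational function of $z$ all the way to infinity. Finally, the degenerate cases $q=0$ and $q=1-p$ are dispatched by a limiting argument, as in the proof of Proposition~\ref{prop:thetaest2}.
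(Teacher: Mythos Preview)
Your upper-bound argument is correct and genuinely different from the paper's. The paper bounds $\psi_\lambda(\lambda^2\zeta^2)$ by the secant line $1+a^2\zeta^2$ (concavity of the CBF $\psi_\lambda$) and feeds this into the first identity of Proposition~\ref{prop:integral}; you instead read $\thet_\lambda=\Arg\psi_\lambda^\dagger(i\lambda)$ directly, use the duality $|\psi_\lambda^\dagger(i\lambda)|^2=\psi_\lambda(\lambda^2)$, and bound $\re\psi_\lambda^\dagger(i\lambda)$ from below via the CBF representation. Your route is arguably cleaner and avoids the integral identity altogether.

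For the lower bound you have correctly identified the two pointwise inequalities that need to hold, and in fact they coincide with the paper's. But you are proving them the hard way, and your ``main obstacle'' is self-inflicted. Both inequalities are nothing more than tangent-line bounds for concave functions. The first says that the CBF $\psi_\lambda$ lies below its tangent at $\xi=\lambda^2$; this gives $\psi_\lambda(\lambda^2\zeta^2)/\psi_\lambda(\lambda^2)\le 1-q(1-\zeta^2)$ for \emph{all} $\zeta>0$ in one line. The second says that the CBF $\xi/(\psi_\lambda(\xi)-1)$ (a CBF by Proposition~\ref{prop:cbf:prop}(c) with $a=0$, $f=\psi_\lambda$, since $\psi_\lambda(0)=1$) lies below \emph{its} tangent at $\xi=\lambda^2$; unwinding gives exactly your second inequality, again for all $\xi>0$. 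This is precisely what the paper does: two tangent lines, then three applications of the second identity of Proposition~\ref{prop:integral}. Your power-series-plus-H\"older scheme might be pushed through for the first inequality, but the global extension of the second past $z=2a$ that you sketch is not a proof as it stands, and in any case is unnecessary.

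Finally, $\beta\le 1$ is simply $\lambda^2\psi_\lambda'(\lambda^2)\le\psi_\lambda(\lambda^2)-1$, i.e.\ Proposition~\ref{prop:cbf:ests}(a) for the CBF $\psi_\lambda(\xi)-1$; no Cauchy--Schwarz is required.
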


It is easy to see that these bounds are sharp for $\psi(\xi) = c_1 \xi / (c_2 + \xi)$ and $\psi(\xi) = c \xi$ (and, in fact, these are all CBFs $\psi$ for which equalities hold), see examples in Section~\ref{sec:examples}.

\begin{proof}
Note that $\psi_\lambda(0) = 1$ and $\psi_\lambda(\lambda^2) = \psi(\lambda^2) / (\lambda^2 \psi'(\lambda^2))$. Let $a^2 = \psi_\lambda(\lambda^2) - 1$. Since $\psi_\lambda$ is concave, we have $\psi_\lambda(\lambda^2 \zeta^2) \ge 1 + a^2 \zeta^2$ for $\zeta < 1$, and $\psi_\lambda(\lambda^2 \zeta^2) \le 1 + a^2 \zeta^2$ for $\zeta > 1$. Hence, by Proposition~\ref{prop:integral},
\formula{
 \thet_\lambda & = -\frac{1}{\pi} \int_0^\infty \frac{1}{1 - \zeta^2} \log \frac{\psi_\lambda(\lambda^2 \zeta^2)}{\psi_\lambda(\lambda^2)} \, d\zeta \\
 & \le -\frac{1}{\pi} \int_0^\infty \frac{1}{1 - \zeta^2} \log \frac{1 + a^2 \zeta^2}{1 + a^2} \, d\zeta = \arctan a .
}
Since $\arctan a = \pi/2 - \arcsin (1 / (1 + a^2)^{1/2})$, this proves the upper bound. 

For the lower bound, let $a_1^2 = \psi_\lambda(\lambda^2) = 1 + a^2$ and $a_2^2 = \lambda^2 \psi_\lambda'(\lambda^2) / \psi_\lambda(\lambda^2)$. By a short calculation, $a_2^2 = \lambda^2 |\psi''(\lambda^2)| / (2 \psi'(\lambda^2))$. Since $\psi_\lambda(\xi)$ is concave, its graph lies below the tangent line at $\xi = \lambda^2$, so that
\formula{
 \psi_\lambda(\lambda^2 \zeta^2) & \le \psi_\lambda(\lambda^2) - \lambda^2 \psi_\lambda'(\lambda^2) (1 - \zeta^2) \\
 & = a_1^2 (1 - a_2^2 (1 - \zeta^2)) .
}
A similar argument for the CBF $\xi / (\psi_\lambda(\xi) - 1)$ and a short calculation give
\formula{
 \frac{\xi}{\psi_\lambda(\xi) - 1} & \le \frac{\lambda^2}{a^2} + \frac{a^2 - a_1^2 a_2^2}{a^4} \, (\xi - \lambda^2) \\
 & = \frac{a^2 \xi - a_1^2 a_2^2 (\xi - \lambda^2)}{a^4} \, ,
}
which for $\xi = \lambda^2 / \zeta^2$ ($\zeta > 0$) reduces to
\formula{
 \psi_\lambda(\lambda^2 / \zeta^2) & \ge 1 + \frac{a^4 \zeta^{-2}}{a^2 \zeta^{-2} - a_1^2 a_2^2 (\zeta^{-2} - 1)} \\
 & = \frac{a_1^2 (a^2 - a_2^2 (1 - \zeta^2))}{a^2 + a_1^2 a_2^2 (1 - \zeta^2)} \, .
}
Therefore, by~\eqref{eq:theta2},
\formula{
 \thet_\lambda & = \frac{1}{\pi} \int_0^1 \frac{1}{1 - \zeta^2} \log \frac{\psi_\lambda(\lambda^2 / \zeta^2)}{\psi_\lambda(\lambda^2 \zeta^2)} \, d\zeta \\
 & \ge \frac{1}{\pi} \int_0^1 \frac{1}{1 - \zeta^2} \log \frac{1 - a^{-2} a_2^2 (1 - \zeta^2)}{(1 - a_2^2 (1 - \zeta^2)) (1 - a^{-2} a_1^2 a_2^2 (1 - \zeta^2))} \, d\zeta .
}
Triple application of Proposition~\ref{prop:integral} yields
\formula{
 \pi \thet_\lambda & \ge (\arcsin a_2)^2 + (\arcsin(a_1 a_2 / a))^2 - (\arcsin(a_2/a))^2 .
\qedhere
}
\end{proof}

%
%                            ---------- o ----------
%

\section{Properties of $F_\lambda(x)$}
\label{sec:flambda}

Some basic estimates of the eigenfunctions $F_\lambda(x)$ have already been established in~\cite{bib:mk10}. However, for the proof of Theorems~\ref{th:fpt}--\ref{th:fptdensityasymp}, more detailed properties of $F_\lambda(x)$ are required. Recall that by~\eqref{eq:lf} and~\eqref{eq:lf0},
%We keep assuming that $\psi(\xi)$ is a fixed CBF with $\psi(0) = 0$. In~\cite{bib:mk10}, the eigenfunctions $F_\lambda$ of the transition semigroup of $X_t$ killed upon leaving the positive half-line were constructed (see~\cite{bib:kkms10} for the case of the Cauchy process). For a class of processes $X_t$, this result was applied to find a formula for the transition density on the positive half-line, as well as for the distribution of the supremum process. Below we extend some of the results of~\cite{bib:mk10}. In this section, we improve the estimates on the eigenfunction $F_\lambda$. By~\cite{bib:mk10}, we have
\formula[eq:lf1]{
 \laplace F_\lambda(\xi) & =  \frac{\lambda}{\lambda^2 + \xi^2} \, \frac{\psi_\lambda^\dagger(\xi)}{\sqrt{\psi_\lambda(\lambda^2)}} \/ , && \lambda > 0, \, \re \xi > 0 ,
}
and by~\eqref{eq:f},
\formula{
 F_\lambda(x) & = \sin(\lambda x + \thet_\lambda) - G_\lambda(x) , && \lambda > 0 , \, x > 0 .
}
The phase shift $\thet_\lambda$ was studied in detail in the previous section. Recall that the correction term $G_\lambda(x)$ is a completely monotone function, $G_\lambda(x) = \laplace \gamma_\lambda(x)$, and $\gamma_\lambda$ is a finite measure on $(0, \infty)$. In many important cases, $\gamma_\lambda$ is given explicitly by~\eqref{eq:gamma} and~\eqref{eq:gamma0}; however, in this section these identities are not used. We extend the definition of $F_\lambda(x)$ by letting $F_\lambda(x) = 0$ for $x \le 0$.

It was proved in Proposition~4.22 in~\cite{bib:mk10} that
\formula[eq:laplaceest0]{
 \laplace F_\lambda(\xi) & \le \frac{\lambda + \xi}{\lambda^2 + \xi^2} \le \frac{2}{\lambda + \xi} \/ , && \lambda, \xi > 0 .
}
By combining \eqref{eq:lf1} and Proposition~\ref{prop:daggerest}, we obtain the following more detailed estimate.

\begin{corollary}
\label{cor:laplaceest}
We have
\formula[eq:laplaceest]{
 e^{-2 \catalan / \pi} \, \frac{\lambda}{\lambda^2 + \xi^2} \, \sqrt{\frac{\psi_\lambda(\xi^2)}{\psi_\lambda(\lambda^2)}} & \le \laplace F_\lambda(\xi) \le e^{2 \catalan / \pi} \, \frac{\lambda}{\lambda^2 + \xi^2} \, \sqrt{\frac{\psi_\lambda(\xi^2)}{\psi_\lambda(\lambda^2)}}
}
for $\lambda, \xi > 0$.\qed
\end{corollary}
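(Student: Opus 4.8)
The plan is to read~\eqref{eq:laplaceest} off directly from the explicit Laplace-transform formula~\eqref{eq:lf1} for $\laplace F_\lambda(\xi)$ by inserting the sharp two-sided bound for $\psi_\lambda^\dagger$. There is really nothing to prove beyond substitution, so I would keep it to a couple of lines.

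First I would recall that, since $\psi$ is a complete Bernstein function, so is $\psi_\lambda$ for every $\lambda > 0$ (Proposition~\ref{prop:cbf:prop}(c), as already noted after~\eqref{eq:psilambda}); in particular $\psi_\lambda$ and $\xi / \psi_\lambda(\xi)$ are both increasing (Proposition~\ref{prop:cbf:prop}(a)), so Proposition~\ref{prop:daggerest} applies with $\psi_\lambda$ in place of $\psi$. This yields $e^{-2\catalan/\pi} \sqrt{\psi_\lambda(\xi^2)} \le \psi_\lambda^\dagger(\xi) \le e^{2\catalan/\pi} \sqrt{\psi_\lambda(\xi^2)}$ for all $\xi > 0$ — which is exactly the already-recorded Corollary~\ref{cor:psilambdadaggerest}, so I could simply cite that.

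Then I would multiply this chain of inequalities through by the strictly positive factor $\lambda / \bigl((\lambda^2 + \xi^2) \sqrt{\psi_\lambda(\lambda^2)}\bigr)$, legitimate for $\lambda, \xi > 0$, and identify the middle term with $\laplace F_\lambda(\xi)$ via~\eqref{eq:lf1}. That produces~\eqref{eq:laplaceest} verbatim. The only point meriting any care — and it is minor — is checking that the hypotheses of Proposition~\ref{prop:daggerest} hold for $\psi_\lambda$ rather than $\psi$, which reduces to the fact that $\psi_\lambda$ is a CBF; no estimation of integrals or further analytic input is involved. In short, I do not anticipate any genuine obstacle here: the corollary is a one-step consequence of the identity~\eqref{eq:lf1} and the prior estimate for $\psi^\dagger$ applied to $\psi_\lambda$.
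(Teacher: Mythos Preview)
Your proposal is correct and matches the paper's approach exactly: the paper states the corollary with a \qed{} and simply notes beforehand that it follows by combining~\eqref{eq:lf1} with Proposition~\ref{prop:daggerest} (equivalently, Corollary~\ref{cor:psilambdadaggerest}). Your careful check that $\psi_\lambda$ is a CBF so that the estimate applies is the only nontrivial point, and it is already recorded in the paper.
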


H{\"o}lder continuity of $F_\lambda$ was already studied in Lemma~4.24 in~\cite{bib:mk10}. However, when $\psi$ does not have a power-type growth at infinity, $F_\lambda$ fails to be H{\"o}lder continuous. On the other hand, uniform estimate for $F_\lambda(x)$ for small $x > 0$ is crucial in the proof of Theorem~\ref{th:fpt}. We generalize the estimates of~\cite{bib:mk10} in the two lemmas below.

Since $G_\lambda(x)$ is completely monotone, we have $G_\lambda'(x) \le 0$ and $G_\lambda''(x) \ge 0$ for $\lambda, x > 0$. It follows that the function $F_\lambda(x) = \sin(\lambda x + \thet_\lambda) - G_\lambda(x)$ is increasing on $[0, (\pi/2 - \thet_\lambda) / \lambda]$ and concave on $[0, (\pi - \thet_\lambda) / \lambda]$ (this explains the role of upper bounds for $\thet_\lambda$). Furthermore, $0 \le G_\lambda(x) \le \sin \thet_\lambda$ (Lemma~4.21 in~\cite{bib:mk10}), and so $-1 - \sin \thet_\lambda \le F_\lambda(x) \le 1$.

\begin{lemma}
\label{lem:flambdaest}
When $\lambda > 0$ and $0 < \lambda x \le (\pi/2 - \thet_\lambda) / 2$, we have
\formula[eq:flambdaest]{
\begin{aligned}
 \frac{e^{1 - 2 \catalan / \pi}}{2(e + 1)} \, \lambda x \sqrt{\frac{\psi_\lambda(1/x^2)}{\psi_\lambda(\lambda^2)}} & \le F_\lambda(x) \\
 & \hspace*{-3em} \le \expr{2 e^{1 + 2 \catalan / \pi} + \frac{4 e}{\pi/2 - \thet_\lambda}} \lambda x \, \sqrt{\frac{\psi_\lambda(1/x^2)}{\psi_\lambda(\lambda^2)}} \/ .
\end{aligned}
}
\end{lemma}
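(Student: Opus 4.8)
The strategy is to transfer the two-sided bound from the Laplace-transform side (Corollary~\ref{cor:laplaceest}) to a pointwise estimate on $F_\lambda(x)$ for small $x$, using the three elementary Tauberian lemmas of Subsection~\ref{sec:lap}. First I would normalise by scaling: writing $F_\lambda(x) = \Phi_\lambda(\lambda x)$ where $\Phi_\lambda$ encodes the profile, it suffices to work at $\lambda = 1$ and keep track of how $\psi_\lambda$ enters, so from now on fix $\lambda$ and study $f(x) = F_\lambda(x)$ on the interval $0 < x \le (\pi/2 - \thet_\lambda)/(2\lambda)$. On this interval $f$ is nonnegative, increasing and concave, because $f(x) = \sin(\lambda x + \thet_\lambda) - G_\lambda(x)$ with $G_\lambda$ completely monotone and $\lambda x + \thet_\lambda \le \pi/2$; this is exactly the structural input that makes the Laplace inversion lemmas applicable.

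\textbf{Upper bound.} For the upper bound I would apply Proposition~\ref{prop:ub} (or rather Proposition~\ref{prop:ub2}) with $a = x$ and the free parameter $\xi$ chosen as $\xi = 1/x$: since $f$ is increasing, $f(x) \le e^{a\xi}\,\xi\,\laplace f(\xi) = e \cdot (1/x)\,\laplace F_\lambda(1/x)$. Plugging in the upper estimate of Corollary~\ref{cor:laplaceest} at $\xi = 1/x$ gives
\formula{
 F_\lambda(x) & \le e \cdot \frac{1}{x} \cdot e^{2\catalan/\pi}\,\frac{\lambda}{\lambda^2 + 1/x^2}\,\sqrt{\frac{\psi_\lambda(1/x^2)}{\psi_\lambda(\lambda^2)}} = e^{1 + 2\catalan/\pi}\,\frac{\lambda x}{1 + (\lambda x)^2}\,\sqrt{\frac{\psi_\lambda(1/x^2)}{\psi_\lambda(\lambda^2)}} ,
}
which already yields a bound of the right shape with constant $e^{1+2\catalan/\pi}$. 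However, the asserted upper bound in~\eqref{eq:flambdaest} carries the extra term $4e/(\pi/2 - \thet_\lambda)$, which signals that a naive application of Proposition~\ref{prop:ub} is not quite enough near the right endpoint of the admissible interval; instead one should use Proposition~\ref{prop:ub2}, which accounts for the fact that $f$ is only increasing on a bounded interval $(0,b)$ with $b = (\pi/2 - \thet_\lambda)/\lambda$ and is bounded below by a positive constant $m$ for $x \ge b$ (here $m = \sin(\pi/2) - G_\lambda(b) = 1 - G_\lambda(b) \ge c > 0$, or simply $m = 0$ crudely). Choosing again $\xi = 1/x$ and using $b - a = (\pi/2 - \thet_\lambda)/\lambda - x \ge (\pi/2 - \thet_\lambda)/(2\lambda)$ on the admissible range, the factor $1/(1 - e^{-(b-a)\xi})$ is controlled by $1 + c/(b\xi) \le 1 + 2e/(\pi/2 - \thet_\lambda)$ after elementary estimates, producing the two summands in the claimed constant.

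\textbf{Lower bound.} For the lower bound I would use Proposition~\ref{prop:lb} with $a = x$, $c = 1$ (since $f$ is increasing, $f(x) \le f(a)\max(1, x/a)$ trivially holds with $c = 1$), and $\xi = 1/x$: this gives $f(x) \ge \xi\,\laplace f(\xi)/(1 + (a\xi)^{-1}e^{-a\xi}) = (1/x)\laplace F_\lambda(1/x)/(1 + e^{-1})$. Inserting the lower estimate of Corollary~\ref{cor:laplaceest} at $\xi = 1/x$,
\formula{
 F_\lambda(x) & \ge \frac{1}{1 + e^{-1}}\cdot\frac{1}{x}\cdot e^{-2\catalan/\pi}\,\frac{\lambda}{\lambda^2 + 1/x^2}\,\sqrt{\frac{\psi_\lambda(1/x^2)}{\psi_\lambda(\lambda^2)}} = \frac{e^{-2\catalan/\pi}}{1 + e^{-1}}\,\frac{\lambda x}{1 + (\lambda x)^2}\,\sqrt{\frac{\psi_\lambda(1/x^2)}{\psi_\lambda(\lambda^2)}} .
}
On the admissible range $\lambda x \le (\pi/2 - \thet_\lambda)/2 < \pi/4 < 1$, so $1 + (\lambda x)^2 < 2$, and $e^{-2\catalan/\pi}/((1+e^{-1})\cdot 2) = e^{1-2\catalan/\pi}/(2(e+1))$, matching the stated constant. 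The verification that $c = 1$ works in Proposition~\ref{prop:lb}, i.e. that $f(x) \le f(a)\max(1,x/a)$ — equivalently monotonicity of $f$ together with $f(x)/x$ decreasing (concavity with $f(0)=0$) — is where the concavity and the endpoint restriction $\lambda x + \thet_\lambda \le \pi/2$ are used essentially, and this routine but slightly delicate bookkeeping of the admissible interval is the only real obstacle; the rest is substitution into the three lemmas.
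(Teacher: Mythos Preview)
Your plan is essentially the paper's own argument: apply Proposition~\ref{prop:lb} for the lower bound and Proposition~\ref{prop:ub2} for the upper bound, both with $\xi=1/x$, and then feed in Corollary~\ref{cor:laplaceest}. The lower-bound computation and the final simplification $1+(\lambda x)^2\le 2$ are exactly right.

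There is, however, a genuine gap in your upper-bound step. You propose to take $m=1-G_\lambda(b)$ or ``$m=0$ crudely'' in Proposition~\ref{prop:ub2}. But $F_\lambda(x)=\sin(\lambda x+\thet_\lambda)-G_\lambda(x)$ is \emph{not} nonnegative on $[b,\infty)$: the sine factor oscillates, so $F_\lambda$ dips down to values near $-1-\sin\thet_\lambda$. The only valid global lower bound is $m=-2$ (or $m=-1-\sin\thet_\lambda$). With $m=-2$, Proposition~\ref{prop:ub2} at $\xi=1/a$ reads
\[
F_\lambda(a)\le \frac{e\,\laplace F_\lambda(1/a)/a \;+\; 2e^{-(b-a)/a}}{1-e^{-(b-a)/a}},
\]
and it is the \emph{additive} term $2e^{-(b-a)/a}$, after the estimates $e^{-1/s}\le s$ and $\psi_\lambda(1/a^2)/\psi_\lambda(\lambda^2)\ge 1$, that produces the summand $4e/(\pi/2-\thet_\lambda)$. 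The denominator factor $1/(1-e^{-(b-a)/a})$ that you point to is harmless here: since $a\le b/2$ it is bounded by~$2$, independently of $\thet_\lambda$. So the blow-up as $\thet_\lambda\to\pi/2$ comes from the negative tail of $F_\lambda$, not from the finite monotonicity interval.

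A smaller point on the lower bound: the hypothesis $F_\lambda(x)\le F_\lambda(a)\max(1,x/a)$ of Proposition~\ref{prop:lb} must hold for \emph{all} $x>0$, not only on the interval where $F_\lambda$ is concave. Your remark ``concavity with $f(0)=0$'' covers $x\le(\pi-\thet_\lambda)/\lambda$, but for larger $x$ one needs the extra observation that $\sin(\lambda x+\thet_\lambda)-\sin\thet_\lambda\le 1-\sin\thet_\lambda$ combined with the secant-slope inequality at $x=b$; the paper handles this by splitting $F_\lambda(x)=(\sin(\lambda x+\thet_\lambda)-\sin\thet_\lambda)+(\sin\thet_\lambda-G_\lambda(x))$ and bounding each piece globally. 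This is indeed routine once noticed, but it is not automatic from local concavity.
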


\begin{proof}
Let $\lambda > 0$, $b = (\pi/2 - \thet_\lambda) / \lambda$ and $0 < a \le b$. By concavity of the sine function and convexity of $G_\lambda$, for any $x > a$ we have
\formula{
 \frac{\sin(\lambda x + \thet_\lambda) - \sin \thet_\lambda}{x} & \le \frac{\sin(\lambda a + \thet_\lambda) - \sin \thet_\lambda}{a} \/ , \\
 \frac{\sin \thet_\lambda - G_\lambda(x)}{x} & \le \frac{\sin \thet_\lambda - G_\lambda(a)}{a} \/ .
}
Hence $F_\lambda(x) \le (x/a) F_\lambda(a)$ when $x > a$. This and monotonicity of $F_\lambda$ on $[0, b]$ yield
\formula{
 F_\lambda(x) & \le F_\lambda(a) \max(1, x/a)
}
for all $x > 0$. By Proposition~\ref{prop:lb}, for $\xi > 0$ we have
\formula{
 F_\lambda(a) & \ge \frac{\xi \laplace F_\lambda(\xi)}{1 + (a \xi)^{-1} e^{-a \xi}} .
}
For $\xi = 1/a$, this gives
\formula{
 F_\lambda(a) & \ge \frac{e}{e + 1} \, \frac{\laplace F_\lambda(1/a)}{a} \, .
}
On the other hand, $F_\lambda(x)$ is increasing in $x \in (0, b)$, and clearly $F_\lambda(x) \ge -2$ for all $x \ge b$. Proposition~\ref{prop:ub2} gives
\formula{
 F_\lambda(a) & \le \frac{e^{a \xi} \xi \laplace F_\lambda(\xi) + 2 e^{-(\pi/2 - \thet_\lambda - \lambda a) \xi / \lambda}}{1 - e^{-(\pi/2 - \thet_\lambda - \lambda a) \xi / \lambda}}
}
for all $\xi > 0$. Let $\xi = 1/a$. Using the inequality $1 - e^{-s} \ge s/(s+1)$ ($s > 0$) in the denominator and $e^{-1/s} \le s$ ($s > 0$) in the numerator, we obtain that
\formula{
 F_\lambda(a) & \le \frac{1}{1 - e^{-(\pi/2 - \thet_\lambda - \lambda a) / (\lambda a)}} \expr{\frac{e \laplace F_\lambda(1/a)}{a} + 2 e^{-(\pi/2 - \thet_\lambda - \lambda a) / (\lambda a)}} \\
 & = \frac{e}{1 - e^{-(\pi/2 - \thet_\lambda - \lambda a) / (\lambda a)}} \expr{\frac{\laplace F_\lambda(1/a)}{a} + 2 e^{-(\pi/2 - \thet_\lambda) / (\lambda a)}} \\
 & \le \frac{e (\pi/2 - \thet_\lambda)}{\pi/2 - \thet_\lambda - \lambda a} \expr{\frac{\laplace F_\lambda(1/a)}{a} + \frac{2 \lambda a}{\pi/2 - \thet_\lambda}} \/ .
}
The above bounds for $F_\lambda(a)$ combined with~\eqref{eq:laplaceest} give
\formula{
 \frac{e^{1 - 2 \catalan / \pi}}{e + 1} \, \frac{\lambda / a}{\lambda^2 + 1/a^2} \, \sqrt{\frac{\psi_\lambda(1/a^2)}{\psi_\lambda(\lambda^2)}} & \le F_\lambda(a) \\
 & \hspace*{-5em} \le \frac{e (\pi/2 - \thet_\lambda)}{\pi/2 - \thet_\lambda - \lambda a} \expr{e^{2 \catalan / \pi} \, \frac{\lambda / a}{\lambda^2 + 1/a^2} \, \sqrt{\frac{\psi_\lambda(1/a^2)}{\psi_\lambda(\lambda^2)}} + \frac{2 \lambda a}{\pi/2 - \thet_\lambda}} .
}
Assume now that $a \in (0, b/2]$. Then $a \le b/2 = (\pi/2 - \thet_\lambda) / (2 \lambda) \le 1 / \lambda$, and therefore $1 / a^2 \le \lambda^2 + 1/a^2 \le 2 / a^2$. It follows that
\formula{
 \frac{e^{1 - 2 \catalan / \pi} \lambda a}{2(e + 1)} \, \sqrt{\frac{\psi_\lambda(1/a^2)}{\psi_\lambda(\lambda^2)}} & \le F_\lambda(a) \\
 & \hspace*{-5em} \le \frac{e (\pi/2 - \thet_\lambda)}{\pi/2 - \thet_\lambda - \lambda a} \expr{e^{2 \catalan / \pi} \lambda a \, \sqrt{\frac{\psi_\lambda(1/a^2)}{\psi_\lambda(\lambda^2)}} + \frac{2 \lambda a}{\pi/2 - \thet_\lambda}} .
}
Since $\psi_\lambda(\xi)$ is increasing on $(0, \infty)$ and $\lambda < 1 / a$, we have $1 \le \sqrt{\psi_\lambda(1/a^2) / \psi_\lambda(\lambda^2)}$. Finally, $\lambda a \le \lambda b / 2 = (\pi/2 - \thet_\lambda)/2$, so that $(\pi/2 - \thet_\lambda) / (\pi/2 - \thet_\lambda - \lambda a) \le 2$. Formula~\eqref{eq:flambdaest} follows.
\end{proof}

Since $F_\lambda(x) = \sin(\lambda x + \thet_\lambda) - G_\lambda(x)$ for a completely monotone $G_\lambda$, the modulus of continuity of $F_\lambda$ is described by the behavior of $F_\lambda(x)$ for small $x$. More precisely, we have the following result.

\begin{lemma} 
\label{lem:flambdacontinuity}
We have
\formula[eq:flambdacontinuity]{
 |F_\lambda(x) - F_\lambda(y)| & \le \min \expr{\frac{30 \lambda |x - y|}{\pi/2 - \thet_\lambda} \, \sqrt{\frac{\psi_\lambda(1/|x-y|^2)}{\psi_\lambda(\lambda^2)}} \, , \, 4}
}
for all $\lambda > 0$ and $x, y \ge 0$.
\end{lemma}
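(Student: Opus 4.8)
The plan is to deduce this two-point estimate from the one-point bound of Lemma~\ref{lem:flambdaest}, using the complete monotonicity of $G_\lambda$ to replace the increment $|G_\lambda(x)-G_\lambda(y)|$ by a single value of $F_\lambda$. Fix $\lambda>0$ and, without loss of generality, assume $0\le y<x$ and set $h=x-y>0$. Since $-2\le F_\lambda(x)\le 1$ for all $x$ (by $0\le G_\lambda\le\sin\thet_\lambda\le 1$), the crude bound $|F_\lambda(x)-F_\lambda(y)|\le 3\le 4$ always holds, which disposes of the second entry in the minimum; it remains to produce the first entry, and for this I would split into two ranges according to the size of $\lambda h$.

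If $\lambda h>(\pi/2-\thet_\lambda)/2$, nothing further is needed. Since $\psi_\lambda$ is an increasing concave function with $\psi_\lambda(0)=1>0$, one has $\psi_\lambda(1/h^2)\ge\min(1,(\lambda h)^{-2})\,\psi_\lambda(\lambda^2)$, hence
\formula{
 \frac{30\lambda h}{\pi/2-\thet_\lambda}\,\sqrt{\frac{\psi_\lambda(1/h^2)}{\psi_\lambda(\lambda^2)}}\ \ge\ \min\expr{\frac{30\lambda h}{\pi/2-\thet_\lambda},\ \frac{30}{\pi/2-\thet_\lambda}}\ \ge\ 15 ,
}
using $\lambda h>(\pi/2-\thet_\lambda)/2$ and $\pi/2-\thet_\lambda\le\pi/2$; so the right-hand side of~\eqref{eq:flambdacontinuity} equals $4$ and the bound $3$ suffices.

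Now suppose $\lambda h\le(\pi/2-\thet_\lambda)/2$; then $\lambda h<1$ and $\thet_\lambda\le\lambda h+\thet_\lambda<\pi/2$. Writing $F_\lambda=\sin(\lambda\,\cdot+\thet_\lambda)-G_\lambda$, the sine part contributes at most $\lambda h$. For the correction term, complete monotonicity gives that $-G_\lambda'\ge 0$ is decreasing, so its integral over an interval of length $h$ is largest at the left endpoint:
\formula{
 0\ \le\ G_\lambda(y)-G_\lambda(x)\ =\ \int_y^x(-G_\lambda'(s))\,ds\ \le\ \int_0^h(-G_\lambda'(s))\,ds\ =\ G_\lambda(0^+)-G_\lambda(h)\ \le\ \sin\thet_\lambda-G_\lambda(h) ,
}
using $G_\lambda(0^+)\le\sin\thet_\lambda$. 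Since $\sin\thet_\lambda-G_\lambda(h)=\bigl(\sin\thet_\lambda-\sin(\lambda h+\thet_\lambda)\bigr)+F_\lambda(h)$ and $\sin(\lambda h+\thet_\lambda)\ge\sin\thet_\lambda$ in the present range, this yields $|G_\lambda(x)-G_\lambda(y)|\le F_\lambda(h)$, whence $|F_\lambda(x)-F_\lambda(y)|\le\lambda h+F_\lambda(h)$. Applying Lemma~\ref{lem:flambdaest} to $F_\lambda(h)$ and using $\lambda h\le\lambda h\sqrt{\psi_\lambda(1/h^2)/\psi_\lambda(\lambda^2)}$ (valid since $\lambda h<1$, so $\psi_\lambda(1/h^2)\ge\psi_\lambda(\lambda^2)$) gives
\formula{
 |F_\lambda(x)-F_\lambda(y)|\ \le\ \expr{1+2e^{1+2\catalan/\pi}+\frac{4e}{\pi/2-\thet_\lambda}}\lambda h\,\sqrt{\frac{\psi_\lambda(1/h^2)}{\psi_\lambda(\lambda^2)}} ,
}
and it remains only to check that the prefactor is at most $30/(\pi/2-\thet_\lambda)$: since $\thet_\lambda<\pi/2$, $e^{2\catalan/\pi}\le 2$ and $\pi/2-\thet_\lambda\le\pi/2$, one has $(\pi/2-\thet_\lambda)\bigl(1+2e^{1+2\catalan/\pi}\bigr)+4e\le(\pi/2)(1+4e)+4e<30$.

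The only genuinely delicate point is the reduction $|G_\lambda(x)-G_\lambda(y)|\le F_\lambda(h)$: it rests on recognising that for a completely monotone function the increment over an interval of fixed length is worst at the left endpoint, and on the elementary observation that $\sin(\lambda h+\thet_\lambda)\ge\sin\thet_\lambda$ once $\lambda h+\thet_\lambda<\pi/2$. Everything else is bookkeeping with the explicit constants of Lemma~\ref{lem:flambdaest} and with concavity and monotonicity of $\psi_\lambda$.
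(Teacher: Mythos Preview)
Your argument is correct and follows essentially the same route as the paper: reduce the increment of $G_\lambda$ via complete monotonicity to $G_\lambda(0^+)-G_\lambda(h)$, bound this by $F_\lambda(h)$ (plus a harmless $\lambda h$), apply Lemma~\ref{lem:flambdaest}, and handle the complementary range $\lambda h>(\pi/2-\thet_\lambda)/2$ by showing the first entry of the minimum already exceeds~$4$. The only cosmetic differences are that you squeeze out $+\lambda h$ rather than the paper's $+2\lambda h$ by keeping track of the sign of $\sin(\lambda h+\thet_\lambda)-\sin\thet_\lambda$, and you use concavity of $\psi_\lambda$ directly where the paper invokes the CBF property of $\xi/\psi_\lambda(\xi)$; neither changes the substance.
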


\begin{proof}
The inequality $|F_\lambda(x) - F_\lambda(y)| \le 4$ is clear. For $x, y > 0$ and $\lambda > 0$,
\formula[eq:flct]{
\begin{aligned}
 |F_\lambda(x) - F_\lambda(y)| & \le |G_\lambda(x) - G_\lambda(y)| + \lambda |x - y| \\
 & \le |G_\lambda(|x - y|) - G_\lambda(0^+)| + \lambda |x - y| \\
 &\le |F_\lambda(|x - y|)| + 2 \lambda |x - y| .
\end{aligned}
}
The same inequality is obviously true also when $x = 0$ or $y = 0$, so from now on we assume that $x, y \ge 0$.

Suppose that $\lambda |x - y| \le (\pi/2 - \thet_\lambda) / 2$. Then, by~\eqref{eq:flct} and \eqref{eq:flambdaest},
\formula{
 |F_\lambda(x) - F_\lambda(y)| & \le \expr{2 e^{1 + 2 \catalan / \pi} + \frac{4 e}{\pi/2 - \thet_\lambda}} \lambda |x - y| \, \sqrt{\frac{\psi_\lambda(1/|x - y|^2)}{\psi_\lambda(\lambda^2)}} + 2 \lambda |x - y| .
}
As in the proof of Lemma~\ref{lem:flambdaest}, $1 \le \sqrt{\psi_\lambda(1/|x-y|^2) / \psi_\lambda(\lambda^2)}$ (indeed $\lambda < 1 / |x - y|$, and $\psi_\lambda$ is increasing), so that
\formula{
 |F_\lambda(x) - F_\lambda(y)| & \le \expr{2 e^{1 + 2 \catalan / \pi} + \frac{4 e}{\pi/2 - \thet_\lambda} + 2} \lambda |x - y| \, \sqrt{\frac{\psi_\lambda(1/|x - y|^2)}{\psi_\lambda(\lambda^2)}} .
}
Finally, the parenthesised expression does not exceed $(\pi e^{1 + 2 \catalan / \pi} + 4 e + \pi) / (\pi/2 - \thet_\lambda)$, and~\eqref{eq:flambdacontinuity} follows.

It remains to consider $\lambda |x - y| > (\pi/2 - \thet_\lambda) / 2$. We claim that in this case, the minimum in~\eqref{eq:flambdacontinuity} is equal to $4$. By Proposition~\ref{prop:cbf:prop}(a), $f(\xi) = \xi / \psi_\lambda(\xi)$ is a CBF. Hence, $f(x) / f(a) \ge \min(1, x / a)$. We obtain
\formula{
 \frac{\lambda |x - y|}{\pi/2 - \thet_\lambda} \, \sqrt{\frac{\psi_\lambda(1/|x-y|^2)}{\psi_\lambda(\lambda^2)}} & = \frac{1}{\pi/2 - \thet_\lambda} \, \sqrt{\frac{\lambda^2 / \psi_\lambda(\lambda^2)}{(1/|x-y|^2) / \psi_\lambda(1/|x-y|^2)}} \\
 & \ge \frac{\min(\lambda |x - y|, 1)}{\pi/2 - \thet_\lambda} > \frac{1}{2} \/ .
}
This proves our claim, and the proof is complete.
\end{proof}

The behavior of $F_\lambda(x)$ when $x \to 0^+$ (with fixed $\lambda > 0$) was studied in Lemma~4.27 in~\cite{bib:mk10}: if $\psi$ is an unbounded CBF regularly varying of order $\alpha \in [0, 1]$ at infinity, then $F_\lambda(x)$ is regularly varying of order $\alpha$ at $0$, and
\formula[eq:flambdaregular]{
 \lim_{x \to 0^+} \expr{\sqrt{\psi(1/x^2)} \, F_\lambda(x)} & = \frac{\sqrt{\lambda^2 \psi'(\lambda^2)}}{\Gamma(1 + \alpha)} \/ , && \lambda > 0 .
}
The next result shows that as $\lambda \to 0^+$ with fixed $x > 0$, $F_\lambda(x)$ behaves as $\lambda \sqrt{\psi'(\lambda^2)} V(x)$, where $V(x)$ is the renewal function of the ascending ladder-height process, $\laplace V(\xi) = \xi / \psi^\dagger(\xi)$ (see Preliminaries).

\begin{proposition}
\label{prop:flambdalimit}
Suppose that $\psi$ is an unbounded CBF. The function $F_\lambda(x)$ is jointly continuous in $\lambda > 0$, $x \ge 0$. Furthermore, if $\limsup_{\lambda \to 0^+} \thet_\lambda < \pi/2$, then
\formula{
 \lim_{\lambda \to 0^+} \frac{F_\lambda(x)}{\lambda \sqrt{\psi'(\lambda^2)}} & = V(x) , && x \ge 0 ,
}
and the convergence is locally uniform in $x \ge 0$. In other words, $F_\lambda(x) / (\lambda \sqrt{\psi'(\lambda^2)})$ extends to a continuous function in $\lambda, x \ge 0$.
\end{proposition}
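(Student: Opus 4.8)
The plan is to treat the two assertions through the Laplace transform formula \eqref{eq:lf0}, exploiting the continuity properties of the transformation $\psi\mapsto\psi^\dagger$ established in Section~\ref{sec:pre}. The computation I would do first is to rewrite \eqref{eq:lf0} in terms of the complete Bernstein function $h_\lambda(\zeta)=(\zeta-\lambda^2)/(\psi(\zeta)-\psi(\lambda^2))$ (extended by $h_\lambda(\lambda^2)=1/\psi'(\lambda^2)$; a CBF by Proposition~\ref{prop:cbf:prop}(c)): since $\psi_\lambda(\zeta)=(\psi(\lambda^2)/\lambda^2)\,h_\lambda(\zeta)$, Proposition~\ref{prop:dagger:prop}(b) gives $\psi_\lambda^\dagger=(\sqrt{\psi(\lambda^2)}/\lambda)\,h_\lambda^\dagger$, and together with $\psi_\lambda(\lambda^2)=\psi(\lambda^2)/(\lambda^2\psi'(\lambda^2))$ a short calculation yields the clean identity $\laplace\bigl(F_\lambda/(\lambda\sqrt{\psi'(\lambda^2)})\bigr)(\xi)=h_\lambda^\dagger(\xi)/(\lambda^2+\xi^2)$. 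This factorization underlies both parts below.

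\emph{Joint continuity.} I would fix a compact $K\subset\hl$ and show that $\{F_\lambda:\lambda\in K\}$ is uniformly bounded (by $|F_\lambda|\le1+\sin\thet_\lambda\le2$) and equicontinuous on $[0,\infty)$: by Lemma~\ref{lem:flambdacontinuity} the modulus of continuity is governed by $\lambda|x-y|\sqrt{\psi_\lambda(1/|x-y|^2)/\psi_\lambda(\lambda^2)}$, and rewriting this ratio as $\psi'(\lambda^2)h_\lambda(1/|x-y|^2)$ and estimating $h_\lambda(1/r^2)\le\bigl(r^2(\psi(1/r^2)-\psi(\lambda^2))\bigr)^{-1}$ shows it tends to $0$ as $|x-y|\to0$, uniformly over $K$ — here the unboundedness of $\psi$ is used, so that $\psi(1/r^2)\to\infty$. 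I also need $\sup_K\thet_\lambda<\pi/2$, which holds because $\thet_\lambda=\Arg\psi_\lambda^\dagger(i\lambda)$ is continuous by Corollary~\ref{cor:psilambdacontinuity}. Since $\laplace F_\lambda(\xi)$ is continuous in $\lambda$ (again Corollary~\ref{cor:psilambdacontinuity}), an Arzel\`a--Ascoli argument (diagonal extraction over $[0,R]$, $R\in\N$) together with dominated convergence and uniqueness of the Laplace transform shows $\lambda\mapsto F_\lambda$ is continuous into $C[0,R]$; combined with equicontinuity in $x$ this gives joint continuity. Along the way one records $F_\lambda(0^+)=0$, e.g.\ from $\xi\laplace F_\lambda(\xi)\to0$, which again needs $\psi$ unbounded.

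\emph{The limit $\lambda\to0^+$.} Write $\tilde F_\lambda=F_\lambda/(\lambda\sqrt{\psi'(\lambda^2)})$, and using $\limsup_{\lambda\to0^+}\thet_\lambda<\pi/2$ fix $\delta\in(0,\pi/2)$ and $\lambda_0\le1$ with $\thet_\lambda\le\pi/2-\delta$ for $\lambda<\lambda_0$. On $[0,\delta/\lambda]$ the function $F_\lambda$ is nonnegative, increasing and concave with $F_\lambda(0)=0$, so $\hat F_\lambda(x):=\tilde F_\lambda(\min(x,\delta/\lambda))$ is a nonnegative increasing function vanishing at $0$; moreover $\laplace\hat F_\lambda(\xi)$ differs from $\laplace\tilde F_\lambda(\xi)$ by a tail integral over $[\delta/\lambda,\infty)$ of size $O(\lambda^{-1}e^{-\xi\delta/\lambda})\to0$, using $|\tilde F_\lambda|\le2/(\lambda\sqrt{\psi'(\lambda^2)})$ and the crude bound $1/(\lambda\sqrt{\psi'(\lambda^2)})=O(1/\lambda)$ (valid since $\psi'$ is nonincreasing and positive). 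At the same time $h_\lambda(\zeta)\to\zeta/\psi(\zeta)=:h_0(\zeta)$ pointwise (note $\psi(0^+)=\Psi(0)=0$), all of these being CBFs, so Proposition~\ref{prop:daggercontinuity} gives $h_\lambda^\dagger(\xi)\to h_0^\dagger(\xi)$ and hence $\laplace\tilde F_\lambda(\xi)\to h_0^\dagger(\xi)/\xi^2$, which by Proposition~\ref{prop:dagger:prop}(a,c) is precisely $\laplace V(\xi)$ (Section~\ref{sec:v}). Thus $\laplace\hat F_\lambda(\xi)\to\laplace V(\xi)$ for all $\xi>0$. Since $\hat F_\lambda$ and $V$ are increasing and vanish at $0$, integration by parts gives $\xi\laplace\hat F_\lambda(\xi)=\int e^{-\xi x}d\hat F_\lambda(x)$ and similarly for $V$, so these Stieltjes--Laplace transforms converge; by the extended continuity theorem for Laplace transforms $d\hat F_\lambda\to dV$ vaguely, so $\hat F_\lambda(x)\to V(x)$ for every $x$ (as $V$ is continuous), and by P\'olya's theorem — pointwise convergence of monotone functions to a continuous monotone limit is locally uniform — this holds locally uniformly. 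Finally $\tilde F_\lambda=\hat F_\lambda$ on $[0,\delta/\lambda]$ with $\delta/\lambda\to\infty$, so $\tilde F_\lambda\to V$ locally uniformly, as claimed.

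The step I expect to be the main obstacle is the last one — upgrading pointwise convergence of Laplace transforms to locally uniform convergence of the functions. Reducing to the genuinely monotone truncation $\hat F_\lambda$ is the trick that makes this essentially automatic; the only real nuisance is the non-monotone tail $x>\delta/\lambda$, which is dispatched by the estimate $1/(\lambda\sqrt{\psi'(\lambda^2)})=O(1/\lambda)$ against the exponential decay of $e^{-\xi\delta/\lambda}$. Pinning the limit down to be exactly $V$, rather than merely comparable to it, is where the exact identity $\laplace\tilde F_\lambda(\xi)=h_\lambda^\dagger(\xi)/(\lambda^2+\xi^2)$ and the continuity $\psi\mapsto\psi^\dagger$ of Proposition~\ref{prop:daggercontinuity} do the work.
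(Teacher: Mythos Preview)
Your proof is correct and complete, but it takes a genuinely different route from the paper's argument. The paper handles both parts by a single Plancherel-based mechanism: from pointwise convergence of $\laplace F_{\lambda_n}(1+i\xi)$ (respectively $\laplace\tilde F_\lambda(1+i\xi)$) together with the uniform bound~\eqref{eq:laplaceest0} one gets $L^2(\R)$ convergence, hence $L^2$ convergence of $e^{-x}F_{\lambda_n}(x)$ (respectively $e^{-x}\tilde F_\lambda(x)$) by Plancherel, and then the equicontinuity from Lemma~\ref{lem:flambdacontinuity} upgrades this to locally uniform convergence. For the second part the paper verifies equicontinuity of $\tilde F_\lambda$ directly via~\eqref{eq:flambdaest0}, using the hypothesis $\limsup_{\lambda\to0^+}\thet_\lambda<\pi/2$ to control the factor $(\pi/2-\thet_\lambda)^{-1}$.

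Your argument avoids Fourier analysis entirely. For joint continuity you substitute an Arzel\`a--Ascoli subsequence argument plus Laplace uniqueness, which is equivalent in spirit but purely real-variable. For the limit $\lambda\to0^+$ your approach is more substantially different: rather than proving equicontinuity of the normalised family $\tilde F_\lambda$, you exploit the monotonicity of $F_\lambda$ on $[0,\delta/\lambda]$ directly, invoke the extended continuity theorem for Laplace--Stieltjes transforms, and then a Dini/P\'olya argument. The rewriting $\laplace\tilde F_\lambda(\xi)=h_\lambda^\dagger(\xi)/(\lambda^2+\xi^2)$ is exactly the paper's computation in different clothing (the paper writes $(\lambda^2/\psi(\lambda^2))\psi_\lambda\to\xi/\psi(\xi)$, which is your $h_\lambda\to h_0$). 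Your approach is a bit longer but uses only elementary tools and makes the role of the hypothesis $\limsup\thet_\lambda<\pi/2$ particularly transparent: it is precisely what provides the monotone window $[0,\delta/\lambda]$ exhausting the half-line. The paper's approach is more uniform across the two parts and slightly slicker, at the cost of invoking Plancherel.
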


\begin{proof}
Roughly speaking, we use estimates of $\laplace F_\lambda$ to show $L^2$ continuity of $e^{-x} F_\lambda(x)$ in $\lambda > 0$, and then estimates of $F_\lambda$ to replace $L^2$ convergence by locally uniform convergence.

Let $\lambda_n \to \lambda > 0$. By Corollary~\ref{cor:psilambdacontinuity}, $\laplace F_{\lambda_n}(1 + i \xi)$ converges to $\laplace F_\lambda(1 + i \xi)$ pointwise for $\xi \in \R$. By~\eqref{eq:laplaceest0} and dominated convergence, $\laplace F_{\lambda_n}(1 + i \xi)$ converges to $\laplace F_\lambda(1 + i \xi)$ in $L^2(\R)$. Note that $\laplace F_\lambda(1 + i \xi)$ is the Fourier transform of $e^{-x} F_\lambda(x)$. By Plancherel's theorem, $e^{-x} F_{\lambda_n}(x)$ converges to $e^{-x} F_\lambda(x)$ in $L^2(\R)$. By~\eqref{eq:flambdacontinuity}, the sequence $F_{\lambda_n}(x)$ is equicontinuous in $x$ (here we use the assumption that $\psi$ is unbounded), and hence it converges to $F_\lambda(x)$ locally uniformly in $x \ge 0$. The first part of the proposition is proved.

As $\lambda \to 0^+$, the functions $(\lambda^2 / \psi(\lambda^2)) \psi_\lambda(\xi)$ converge pointwise to the CBF $\xi / \psi(\xi)$. Therefore, by Proposition~\ref{prop:daggercontinuity}, $\sqrt{\lambda^2 / \psi(\lambda^2)} \, \psi_\lambda^\dagger(\xi)$ converges to $\xi / \psi^\dagger(\xi)$ ($\xi \in \C \setminus (-\infty, 0]$). We conclude that $\laplace F_\lambda(1 + i \xi) / (\lambda \sqrt{\psi'(\lambda^2)})$ converges to $\laplace V(1 + i \xi)$ ($\xi \in \R$). Again, we obtain $L^2(\R)$ convergence of $e^{-x} F_\lambda(x) / (\lambda \sqrt{\psi'(\lambda^2)})$ to $e^{-x} V(x)$. The proof will be complete if we show that the family of functions $F_\lambda(x) / (\lambda \sqrt{\psi'(\lambda^2)})$ is equicontinuous as $\lambda \to 0^+$.

By~\eqref{eq:flambdacontinuity}, we have
\formula[eq:flambdaest0]{
\begin{split}
 \frac{|F_\lambda(x) - F_\lambda(y)|}{\lambda \sqrt{\psi'(\lambda^2)}} & \le \frac{30 |x - y|}{\pi/2 - \thet_\lambda} \, \sqrt{\frac{\psi_\lambda(1/|x-y|^2)}{\psi'(\lambda^2) \psi_\lambda(\lambda^2)}} \\
 & = \frac{30}{\pi/2 - \thet_\lambda} \, \sqrt{\frac{1 - \lambda^2 |x-y|^2}{\psi(1/|x-y|^2) - \psi(\lambda^2)}} \/ .
\end{split}
}
Fix $\lambda_0 > 0$ small enough. By the assumption, $\pi/2 - \thet_\lambda$ is bounded below by a positive constant for $\lambda \in (0, \lambda_0)$. It follows that for $\lambda \in (0, \lambda_0)$ and $x, y \ge 0$ satisfying $|x - y| < 1 / \lambda_0$, the expression on the right hand side of~\eqref{eq:flambdaest0} is bounded above by $c / \sqrt{\psi(1/|x - y|)^2 - \psi(\lambda_0^2)}$ (with $c$ depending only on $\psi$ and $\lambda_0$). This upper bound does not depend on $\lambda \in (0, \lambda_0)$, and since $\psi$ is unbounded, it converges to $0$ as $|x - y| \to 0$.
\end{proof}

%
%                            ---------- o ----------
%

\section{Suprema and first passage times for complete Bernstein functions}
\label{sec:supcbf}

Below we prove Theorems~\ref{th:fpt}--\ref{th:fptdensityasymp}. We remark that the condition~\eqref{eq:fpt:a1} is used only to assert that $\sup_{\lambda > 0} \thet_\lambda < \pi/2$, and can be replaced by the latter condition.

\begin{proof}[Proof of Theorem~\ref{th:fpt}]
First we consider $n = 0$, that is, we will show that
\formula[eq:fpt0]{
 \pr(\tau_x > t) & = \frac{2}{\pi} \int_0^\infty \sqrt{\frac{\psi'(\lambda^2)}{\psi(\lambda^2)}} \, e^{-t \psi(\lambda^2)} F_\lambda(x) d\lambda .
}
We claim that the integral in~\eqref{eq:fpt0} converges, and that $e^{-\xi x} e^{-t \psi(\lambda^2)} F_\lambda(x) \sqrt{\psi'(\lambda^2) / \psi(\lambda^2)}$ is jointly integrable in $\lambda, x > 0$ for any $\xi > 0$. Assuming this is true, the proof is quite straightforward. Indeed, by Fubini, the Laplace transform (in $x$) of the right hand side of~\eqref{eq:fpt0} is then
\formula{
 \frac{2}{\pi} \int_0^\infty \sqrt{\frac{\psi'(\lambda^2)}{\psi(\lambda^2)}} \, e^{-t\psi(\lambda^2)} \laplace F_\lambda(\xi) d\lambda .
}
By Theorem~\ref{th:suplaplace} (see~\eqref{eq:ltau}) and Theorem~\ref{th:eigenfunctions} (see~\eqref{eq:lf0}), this is equal to the Laplace transform (in $x$) of $\pr(\tau_x > t)$, and the result follows by the uniqueness of the Laplace transform. Hence it is enough to prove our claim.

Let $t \ge t_0$, $\Theta = \sup_{\lambda > 0} \thet_\lambda$ and $\lambda_0 = (\pi/2 - \Theta) / (2 x)$. By the assumption~\eqref{eq:fpt:a1} and Proposition~\ref{prop:thetaest2}, $\Theta < \pi/2$, and hence $\lambda_0 > 0$. Note that $\lambda_0 x \le \pi/4 < 1$. Since $|F_\lambda(x)| \le 2$ and $\sqrt{\psi'(\lambda^2) / \psi(\lambda^2)} \le 1/\lambda$, we have
\formula{
 \hspace*{3em} & \hspace*{-3em} \int_{\lambda_0}^\infty \sqrt{\frac{\psi'(\lambda^2)}{\psi(\lambda^2)}} \, e^{-t\psi(\lambda^2)} |F_\lambda(x)| d\lambda \\
 & \le 2 \int_{\min(\lambda_0, 1)}^1 \frac{e^{-t\psi(\lambda^2)}}{\lambda} \, d\lambda + 2 \int_1^\infty \sqrt{\frac{\psi'(\lambda^2)}{\psi(\lambda^2)}} \, e^{-t\psi(\lambda^2)} d\lambda \\
 & \le 2 \max(0, -\log \lambda_0) + 2 \int_1^\infty \sqrt{\frac{\psi'(\lambda^2)}{\psi(\lambda^2)}} \, e^{-t_0 \psi(\lambda^2)} d\lambda \/ ,
}
which is finite by assumption~\eqref{eq:fpt:a2}. We now consider $\lambda \in (0, \lambda_0)$. By Lemma~\ref{lem:flambdaest}, we have
\formula{
 0 \le \sqrt{\frac{\psi'(\lambda^2)}{\psi(\lambda^2)}} \, e^{-t\psi(\lambda^2)} F_\lambda(x) & \le c_1(\Theta) \lambda x \, \sqrt{\frac{\psi'(\lambda^2) \psi_\lambda(1/x^2)}{\psi(\lambda^2) \psi_\lambda(\lambda^2)}} \, .
}
Furthermore, by~\eqref{eq:psilambdaest3} (recall that $\lambda < \lambda_0 < 1 / x$),
\formula{
 \lambda x \, \sqrt{\frac{\psi'(\lambda^2) \psi_\lambda(1/x^2)}{\psi(\lambda^2) \psi_\lambda(\lambda^2)}} & \le \frac{\lambda \psi'(\lambda^2)}{\sqrt{\psi(\lambda^2) (\psi(1/x^2) - \psi(\lambda^2))}} \, .
}
Integration and substitution $z = \psi(\lambda^2)$ give
\formula{
 & \int_0^{\lambda_0} \sqrt{\frac{\psi'(\lambda^2)}{\psi(\lambda^2)}} \, e^{-t\psi(\lambda^2)} |F_\lambda(x)| d\lambda \le c_1(\Theta) \int_0^{\lambda_0} \frac{\lambda \psi'(\lambda^2)}{\sqrt{\psi(\lambda^2) (\psi(1 / x^2) - \psi(\lambda^2))}} \, d\lambda \\
 & \qquad = \frac{c_1(\Theta)}{2} \int_0^{\psi(\lambda_0^2)} \frac{1}{\sqrt{z (\psi(1/x^2) - z)}} \, dz \le \frac{c_1(\Theta) \pi}{2} \, .
}
We conclude that
\formula{
 \int_0^\infty \sqrt{\frac{\psi'(\lambda^2)}{\psi(\lambda^2)}} \, e^{-t\psi(\lambda^2)} |F_\lambda(x)| d\lambda & \le c_2(\psi, t) + 2 \max(0, \log x) ,
}
which shows that the integral in~\eqref{eq:fpt0} is absolutely convergent. This also shows that
\formula{
 \int_0^\infty \int_0^\infty e^{-\xi x} \sqrt{\frac{\psi'(\lambda^2)}{\psi(\lambda^2)}} \, e^{-t\psi(\lambda^2)} |F_\lambda(x)| d\lambda d\xi & < \infty ,
}
as desired. The proof in the case $n = 0$ is complete.

For general $n \ge 0$, one uses dominated convergence to prove that the derivative can by taken under the integral sign; we omit the details.
\end{proof}

We now find upper and lower bounds for the derivatives in $t$ of $\pr(\tau_x > t)$. Note that the estimates of $\pr(\tau_x > t)$ are covered by~\cite{bib:kmr11} for a wider class of L{\'e}vy processes. We begin with a simple technical result.

\begin{proposition}
\begin{enumerate}
\item[(a)] Let $\gamma(a; x) = \int_0^x e^{-s} s^{a-1} ds$ ($a, x > 0$) be the lower incomplete gamma function. Then
\formula[eq:ligest]{
 \frac{\min(1, x^a)}{a e} \le \gamma(a; x) & \le \frac{\min(\Gamma(a+1), x^a)}{a} \, , && a, x > 0 .
}
\item[(b)] We have
\formula[eq:auxintest]{
 \int_0^x \frac{e^{-s} s^{a - 1}}{\sqrt{x - s}} \, ds & \le \frac{\min(2 \Gamma(a) + \pi a^a e^{-a}, 2 (1/a + 1) x^a)}{\sqrt{2 x}} \, , && a, x > 0 .
}
\end{enumerate}
\end{proposition}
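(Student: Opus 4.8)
The plan is to prove both parts by elementary estimates, the only subtlety being the numerical constants in part~(b), which I would handle via the substitution $s=xu$ and a split of the resulting integral at $\frac{1}{2}$. Part~(a) is immediate: for the upper bound, $e^{-s}\le1$ gives $\gamma(a;x)\le\int_0^x s^{a-1}\,ds=x^a/a$, while enlarging the domain of integration to $(0,\infty)$ gives $\gamma(a;x)\le\Gamma(a)=\Gamma(a+1)/a$, and the minimum of the two is~\eqref{eq:ligest}; for the lower bound, put $m=\min(x,1)$, so that $e^{-s}>e^{-1}$ on $(0,m)$, hence $\gamma(a;x)\ge e^{-1}\int_0^m s^{a-1}\,ds=m^a/(ae)=\min(1,x^a)/(ae)$, using $\min(x,1)^a=\min(x^a,1)$ for $a>0$.

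For part~(b) I would first substitute $s=xu$, obtaining
\[
 \int_0^x\frac{e^{-s}s^{a-1}}{\sqrt{x-s}}\,ds=\frac{\sqrt2\,x^a}{\sqrt{2x}}\int_0^1\frac{e^{-xu}u^{a-1}}{\sqrt{1-u}}\,du,
\]
and then estimate the last integral in two ways. Discarding $e^{-xu}\le1$ and using the beta integral $\int_0^1 u^{a-1}(1-u)^{-1/2}\,du=\sqrt\pi\,\Gamma(a)/\Gamma(a+\frac{1}{2})$ reduces the $x^a$-bound to the inequality $\sqrt\pi\,\Gamma(a)/\Gamma(a+\frac{1}{2})\le\sqrt2\,(1+a)/a$. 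This follows from log-convexity of $\Gamma$: since $a+1=\frac{1}{2}\bigl((a+\frac{1}{2})+(a+\frac{3}{2})\bigr)$, one has $\Gamma(a+1)\le\sqrt{\Gamma(a+\frac{1}{2})\Gamma(a+\frac{3}{2})}=\Gamma(a+\frac{1}{2})\sqrt{a+\frac{1}{2}}$, whence $\sqrt\pi\,\Gamma(a)/\Gamma(a+\frac{1}{2})=\frac{\sqrt\pi}{a}\cdot\frac{\Gamma(a+1)}{\Gamma(a+\frac{1}{2})}\le\frac{\sqrt\pi\,\sqrt{a+\frac{1}{2}}}{a}$, and $\sqrt\pi\,\sqrt{a+\frac{1}{2}}\le\sqrt2\,(1+a)$ because $2(1+a)^2-\pi(a+\frac{1}{2})=2a^2+(4-\pi)a+(2-\frac{\pi}{2})$ has strictly positive coefficients.

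For the $\Gamma(a)$-bound I would split $\int_0^1$ at $u=\frac{1}{2}$. On $(0,\frac{1}{2})$ the bound $(1-u)^{-1/2}\le\sqrt2$ and the substitution $v=xu$ give $\int_0^{1/2}e^{-xu}u^{a-1}(1-u)^{-1/2}\,du\le\sqrt2\,x^{-a}\int_0^{x/2}e^{-v}v^{a-1}\,dv\le\sqrt2\,x^{-a}\Gamma(a)$, which after multiplication by the prefactor $\sqrt2\,x^a/\sqrt{2x}$ contributes $2\Gamma(a)/\sqrt{2x}$. On $(\frac{1}{2},1)$, absorbing the prefactor into the integrand, the contribution is $\frac{\sqrt2}{\sqrt{2x}}\int_{1/2}^1(x^a e^{-xu})u^{a-1}(1-u)^{-1/2}\,du$, and the pointwise estimate $x^a e^{-xu}\le\sup_{y>0}y^a e^{-yu}=(a/u)^a e^{-a}$ bounds it by $\frac{\sqrt2\,a^a e^{-a}}{\sqrt{2x}}\int_{1/2}^1 u^{-1}(1-u)^{-1/2}\,du$; the substitution $1-u=t^2$ evaluates $\int_{1/2}^1 u^{-1}(1-u)^{-1/2}\,du=2\log(1+\sqrt2)$, and $2\sqrt2\,\log(1+\sqrt2)\approx2.49<\pi$ then yields the bound $\pi a^a e^{-a}/\sqrt{2x}$. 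Summing the two contributions and taking the minimum with the $x^a$-bound gives~\eqref{eq:auxintest}.

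I expect the constant in the $\Gamma(a)$-bound to be the only genuine obstacle. A crude argument such as $e^{-s}\le e^{-x/2}$ on $(x/2,x)$ followed by maximising in $x$ produces the constant $4$, which exceeds $\pi$; it is therefore essential to retain the full exponential decay through the optimisation $x^a e^{-xu}\le(a/u)^a e^{-a}$ and to evaluate $\int_{1/2}^1 u^{-1}(1-u)^{-1/2}\,du$ exactly rather than merely estimate it. Everything else is routine.
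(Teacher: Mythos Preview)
Your proof is correct. Part~(a) and the $\Gamma(a)$-half of part~(b) match the paper's argument almost exactly: the paper also splits at the midpoint and uses $s^{a}e^{-s}\le a^{a}e^{-a}$ on the upper half, then bounds the remaining integral via $\sqrt{2/x}\int_{x/2}^{x}(s(x-s))^{-1/2}\,ds=\tfrac{\pi}{2}\sqrt{2/x}$ rather than your exact value $2\log(1+\sqrt2)$, arriving at the same $\pi a^{a}e^{-a}/\sqrt{2x}$. The $x^{a}$-bound is where you genuinely diverge. The paper keeps the split and on $(x/2,x)$ invokes the pointwise inequality $e^{-s}s^{a-1}\le x^{a-1}$; this fails for $a<1$ and small $x$ (for $a=\tfrac12$ it requires $e^{-x/2}\sqrt2\le1$, i.e.\ $x\ge\log 2$), and the resulting intermediate bound $\int_{x/2}^{x}e^{-s}s^{a-1}(x-s)^{-1/2}\,ds\le\sqrt2\,x^{a-1/2}$ is likewise false there (for $a=\tfrac12$ and $x\to0$ the left side tends to $\pi/2>\sqrt2$). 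Your route---no split, reduce to $B(a,\tfrac12)=\sqrt\pi\,\Gamma(a)/\Gamma(a+\tfrac12)$ and control the gamma ratio by log-convexity---is valid for all $a>0$, so it is not only different but actually repairs a gap in the paper's derivation of this particular estimate.
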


\begin{proof}
For the part~(a), we simply have
\formula{
 \gamma(a; x) & = \int_0^x e^{-s} s^{a-1} ds \ge \frac{1}{e} \int_0^{\min(1, x)} s^{a-1} ds = \frac{\min(1, x^a)}{a e} \, ,
}
and
\formula{
 \gamma(a; x) & = \int_0^x e^{-s} s^{a-1} ds \le \min\expr{\Gamma(a), \int_0^x s^{a-1} ds} = \min\expr{\Gamma(a), \frac{x^a}{a}} .
}
To prove~(b), we split the integral into two parts. First,
\formula{
 \int_0^{x/2} \frac{e^{-s} s^{a - 1}}{\sqrt{x - s}} \, ds & \le \sqrt{\frac{2}{x}} \, \int_0^{x/2} e^{-s} s^{a - 1} ds \\
 & = \sqrt{\frac{2}{x}} \, \gamma(a; x/2) \le \sqrt{\frac{2}{x}} \, \min\expr{\Gamma(a), \frac{x^a}{a}} .
}
Next, $s^a e^{-s}$ attains its maximum at $s = a$. Hence,
\formula{
 \int_{x/2}^x \frac{e^{-s} s^{a - 1}}{\sqrt{x - s}} \, ds & \le a^a e^{-a} \int_{x/2}^x \frac{1}{\sqrt{s^2 (x - s)}} \, ds \\
 & \le a^a e^{-a} \, \sqrt{\frac{2}{x}} \, \int_{x/2}^x \frac{1}{\sqrt{s (x - s)}} \, ds = \frac{\pi a^a e^{-a}}{2} \, \sqrt{\frac{2}{x}} \, .
}
Finally,
\formula{
 \int_{x/2}^x \frac{e^{-s} s^{a - 1}}{\sqrt{x - s}} \, ds & \le x^{a - 1} \int_{x/2}^x \frac{1}{\sqrt{x - s}} \, ds = \sqrt{2} \, x^{a - 1/2} .
}
It follows that
\formula{
 \int_0^x \frac{e^{-s} s^{a - 1}}{\sqrt{x - s}} \, ds & \le \sqrt{\frac{2}{x}} \, \frac{\min(\Gamma(a+1), x^a)}{a} + \min\expr{\frac{\pi a^a e^{-a}}{2} \, \sqrt{\frac{2}{x}}, \sqrt{2} \, x^{a - 1/2}} \\
 & \le \sqrt{\frac{2}{x}} \expr{\frac{\min(\Gamma(a+1), x^a)}{a} + \min\expr{\frac{\pi a^a e^{-a}}{2}, x^a}} ,
}
which gives~\eqref{eq:auxintest}.
\end{proof}

\begin{lemma}
\label{lem:fptdensityest}
Suppose that~\eqref{eq:fpt:a1} and~\eqref{eq:fpt:a2} hold for some $t_0 > 0$. For $n \ge 0$, $t > t_0$ ($t \ge t_0$ if $n = 0$) and  $\lambda_0 > 0$, denote (cf~\eqref{eq:fpt:a2})
\formula{
 I_n(t_0, \lambda_0) & = \frac{4}{\pi} \int_{\lambda_0}^\infty e^{-t_0 \psi(\lambda^2)} \sqrt{\psi'(\lambda^2)} (\psi(\lambda^2))^{n-1/2} d\lambda .
}
Let $\Theta = \sup_{\xi > 0} \thet_\lambda$ and $\lambda_0(x) = (\pi/2 - \Theta) / (2 x)$. If $t > t_0$ ($t \ge t_0$ if $n = 0$) and $x > 0$, then
\formula[]{
\label{eq:fptdensityest}
\begin{split}
 & C_1(n, \Theta) \, \min\expr{(\psi(1/x^2))^n, \frac{1}{t^{n + 1/2} \sqrt{\psi(1/x^2)}}} + I_n(t, \lambda_0(x)) \\
 & \qquad \le (-1)^n \, \frac{d^n}{dt^n} \, \pr(\tau_x > t) \\ & \qquad \le C_2(n, \Theta) \min\expr{(\psi(1/x^2))^n, \frac{1}{t^{n + 1/2} \sqrt{\psi(1/x^2)}}} - I_n(t, \lambda_0(x)) .
\end{split}
}
\end{lemma}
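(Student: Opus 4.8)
The plan is to start from the representation of $(-1)^n(d^n/dt^n)\pr(\tau_x>t)$ supplied by Theorem~\ref{th:fpt}, whose hypotheses are exactly those of the present lemma:
\[
 (-1)^n\frac{d^n}{dt^n}\pr(\tau_x>t)=\frac{2}{\pi}\int_0^\infty\sqrt{\psi'(\lambda^2)}\,(\psi(\lambda^2))^{n-1/2}\,e^{-t\psi(\lambda^2)}\,F_\lambda(x)\,d\lambda ,
\]
and then to split the integral at $\lambda_0=\lambda_0(x)=(\pi/2-\Theta)/(2x)$, which is positive since $\Theta=\sup_{\lambda>0}\thet_\lambda<\pi/2$ by~\eqref{eq:fpt:a1} and Proposition~\ref{prop:thetaest2}. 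Write $A$ for $\tfrac2\pi$ times the integral over $(0,\lambda_0)$ and $B$ for $\tfrac2\pi$ times the integral over $(\lambda_0,\infty)$. The tail is controlled at once: since $|F_\lambda(x)|\le2$, we get $|B|\le I_n(t,\lambda_0(x))$, and this integral is finite in the admissible range of $t$ by~\eqref{eq:fpt:a2} together with the bound $\sqrt{\psi'(\lambda^2)/\psi(\lambda^2)}\le1/\lambda$ of Proposition~\ref{prop:cbf:ests}(a). Hence the whole problem reduces to showing that $A$ lies between $C_1(n,\Theta)$ and $C_2(n,\Theta)$ times $\min\big((\psi(1/x^2))^n,\,(t^{n+1/2}\sqrt{\psi(1/x^2)})^{-1}\big)$; the stated inequalities then follow by combining this with $|B|\le I_n(t,\lambda_0(x))$, the term $I_n$ recording the contribution discarded from the range $(\lambda_0(x),\infty)$.

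To estimate $A$, note that for $\lambda\in(0,\lambda_0)$ one has $\lambda x<\lambda_0 x=(\pi/2-\Theta)/2\le(\pi/2-\thet_\lambda)/2<\pi/4<1$, so Lemma~\ref{lem:flambdaest} applies and squeezes $F_\lambda(x)$ between $\Theta$-dependent constant multiples of $\lambda x\sqrt{\psi_\lambda(1/x^2)/\psi_\lambda(\lambda^2)}$. Using the identity $\psi_\lambda(1/x^2)/\psi_\lambda(\lambda^2)=\psi'(\lambda^2)(1/x^2-\lambda^2)/(\psi(1/x^2)-\psi(\lambda^2))$ from~\eqref{eq:psilambdaest3}, together with the elementary two-sided bound $(1-\pi^2/16)x^{-2}\le1/x^2-\lambda^2\le x^{-2}$ (valid because $\lambda<\lambda_0<\pi/(4x)$), this ratio is, up to absolute constants, $\psi'(\lambda^2)\big/\big(x^2(\psi(1/x^2)-\psi(\lambda^2))\big)$. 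After the substitution $z=\psi(\lambda^2)$, for which $dz=2\lambda\psi'(\lambda^2)\,d\lambda$ and $z$ runs over $(0,\psi(\lambda_0^2))$ since $\psi(0^+)=\Psi(0)=0$, both one-sided estimates for $A$ reduce, up to $\Theta$-dependent constants, to the integral
\[
 \int_0^{\psi(\lambda_0^2)}\frac{z^{n-1/2}e^{-tz}}{\sqrt{\psi(1/x^2)-z}}\,dz .
\]
For the upper bound I enlarge the domain to $(0,\psi(1/x^2))$ and substitute $s=tz$, turning the integral into $t^{-n}\int_0^{t\psi(1/x^2)}s^{n-1/2}e^{-s}(t\psi(1/x^2)-s)^{-1/2}\,ds$, which is bounded via~\eqref{eq:auxintest} with $a=n+1/2$; collecting the powers of $t$ picked up, this is exactly a constant times $\min\big((\psi(1/x^2))^n,\,(t^{n+1/2}\sqrt{\psi(1/x^2)})^{-1}\big)$. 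For the lower bound I first use concavity of $\psi$ and $\psi(0)=0$ to obtain $\psi(\lambda_0^2)\ge(\lambda_0 x)^2\psi(1/x^2)=\big((\pi/2-\Theta)/2\big)^2\psi(1/x^2)$, then restrict the integral to $(0,c(\Theta)\psi(1/x^2))$, where it is legitimate to bound $\sqrt{\psi(1/x^2)-z}\le\sqrt{\psi(1/x^2)}$, and after $s=tz$ recognise the remainder as $t^{-n-1/2}\psi(1/x^2)^{-1/2}$ times a lower incomplete gamma function, bounded below by~\eqref{eq:ligest}; this again yields a constant (depending on $n$ and $\Theta$) times the same $\min$.

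I expect the real difficulty to be precisely the estimate of $A$ on $(0,\lambda_0)$: this is where one pays for the lack of power-type control on $\psi$, by invoking the sharp two-sided bound for $F_\lambda(x)$ near the origin from Lemma~\ref{lem:flambdaest} and then carefully collapsing the ratio $\psi_\lambda(1/x^2)/\psi_\lambda(\lambda^2)$ — which a priori involves $\psi_\lambda^\dagger$ and the constant $\Theta$ — into an expression involving only $\psi$, so that the substitution $z=\psi(\lambda^2)$ brings everything into the incomplete-gamma-type integrals of the preceding proposition. Once the integrand is written purely in terms of $z$, what remains is essentially bookkeeping: matching the two ways of writing the $\min$, and tracking the powers of $t$ introduced by $s=tz$; the tail term $B$ is dispatched by the soft estimate $|F_\lambda(x)|\le2$.
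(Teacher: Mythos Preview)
Your proposal is correct and follows essentially the same route as the paper's proof: split the integral from Theorem~\ref{th:fpt} at $\lambda_0(x)$, bound the tail by $|F_\lambda(x)|\le 2$ to obtain $I_n(t,\lambda_0(x))$, and on $(0,\lambda_0)$ apply Lemma~\ref{lem:flambdaest}, rewrite $\psi_\lambda(1/x^2)/\psi_\lambda(\lambda^2)$ in terms of $\psi$, substitute $z=\psi(\lambda^2)$, and invoke~\eqref{eq:ligest} and~\eqref{eq:auxintest}. The only cosmetic difference is that the paper keeps the factor $\sqrt{1-\lambda^2x^2}\ge\sqrt{1-k^2}$ explicitly in the lower bound before substituting, whereas you absorb it into the two-sided constants up front; the argument is otherwise step-for-step the same.
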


\begin{remark}
\label{rem:remainder}
\begin{enumerate}
\item
When~\eqref{eq:fpt:a2} holds for some $t_0 > 0$, then clearly $I_0(t_0, \lambda_0) < \infty$ for any $\lambda_0 > 0$. Furthermore, for any $n > 0$, $t > t_0$ and $\lambda_0 > 0$, we have $e^{-(t - t_0) \psi(\lambda^2)} \le c(n, \lambda_0) (\psi(\lambda^2))^n$ for $\lambda \ge \lambda_0$. Hence, \eqref{eq:fpt:a2} implies also that $I_n(t, \lambda_0) < \infty$ for any $n > 0$, $t > t_0$ and $\lambda_0 > 0$.
\item
When $t > t_0$, then $I_n(t, \lambda_0) \le e^{-(t-t_0) \psi(\lambda_0^2)} I_n(t_0, \lambda_0)$. Hence, $I_n(t, \lambda_0)$, if finite, decays exponentially fast as $t \to \infty$. Therefore, $I_n(t, \lambda_0)$ can be considered as the `error term'; see Theorem~\ref{th:fptdensityasymp}.\qed
\end{enumerate}
\end{remark}

\begin{proof}[Proof of Lemma~\ref{lem:fptdensityest}]
By the assumption~\eqref{eq:fpt:a1} and Proposition~\ref{prop:thetaest2}, $\Theta < \pi/2$. Let $k = (\pi/2 - \Theta) / 2 \in (0, \pi/4)$, so that $\lambda_0 = \lambda_0(x) = k / x$. Denote the integrand in~\eqref{eq:fpt} by $f_{n,t,x}(\lambda)$,
\formula{
 f_{n,t,x}(\lambda) & = \sqrt{\frac{\psi'(\lambda^2)}{\psi(\lambda^2)}} \, (\psi(\lambda^2))^n e^{-t \psi(\lambda^2)} F_\lambda(x) .
}
By Lemma~\ref{lem:flambdaest}, for $\lambda \in (0, \lambda_0)$ we have
\formula{
 f_{n,t,x}(\lambda) & \ge (\psi(\lambda^2))^{n - 1/2} \sqrt{\psi'(\lambda^2)} \, e^{-t\psi(\lambda^2)} \, \frac{e^{1 - 2 \catalan / \pi} \lambda x}{2(e + 1)} \, \sqrt{\frac{\psi_\lambda(1/x^2)}{\psi_\lambda(\lambda^2)}} \\
 & = \frac{e^{1 - 2 \catalan / \pi} \lambda \psi'(\lambda^2)}{2(e + 1)} \, e^{-t\psi(\lambda^2)} \frac{(\psi(\lambda^2))^{n - 1/2} \sqrt{1 - \lambda^2 x^2}}{\sqrt{\psi(1/x^2) - \psi(\lambda^2)}} \\
 & \ge \frac{e^{1 - 2 \catalan / \pi} \lambda \psi'(\lambda^2)}{2(e + 1)} \, e^{-t\psi(\lambda^2)} \frac{(\psi(\lambda^2))^{n - 1/2} \sqrt{1 - k^2}}{\sqrt{\psi(1/x^2))}} \, .
}
Since $\lambda x < k$ and $\psi(\lambda_0^2) = \psi(k^2 / x^2) \ge k^2 \psi(1 / x^2)$ ($\psi$ is nonnegative and concave, and $k < 1$), we obtain (with $z = \psi(\lambda^2)$)
\formula{
 \int_0^{\lambda_0} f_{n,t,x}(\lambda) d\lambda & \ge \frac{e^{1 - 2 \catalan / \pi} \sqrt{1 - k^2}}{4 (e + 1) \sqrt{\psi(1/x^2)}} \int_0^{\psi(\lambda_0^2)} z^{n - 1/2} e^{-t z} dz \\
 & \hspace*{-2em} \ge \frac{e^{1 - 2 \catalan / \pi} \sqrt{1 - k^2}}{4 (e + 1) t^{n + 1/2} \sqrt{\psi(1/x^2)}} \, \gamma(n + 1/2; k^2 t \psi(1/x^2)) ,
}
where $\gamma$ is the lower incomplete gamma function. By~\eqref{eq:ligest},
\formula[eq:fptdensityest1]{
\begin{split}
 \int_0^{\lambda_0} f_{n,t,x}(\lambda) d\lambda & \ge \frac{e^{1 - 2 \catalan / \pi} \sqrt{1 - k^2}}{4 (e + 1) t^{n + 1/2} \sqrt{\psi(1/x^2)}} \, \frac{\min(1, (k^2 t \psi(1/x^2))^{n + 1/2})}{(n + 1/2) e} \, .
\end{split}
}
In a similar way,
\formula{
 f_{n,t,x}(\lambda) & \le \expr{2 e^{1 + 2 \catalan / \pi} + \frac{2 e}{k}} \lambda \psi'(\lambda^2) e^{-t\psi(\lambda^2)} \, \frac{(\psi(\lambda^2))^{n - 1/2}}{\sqrt{\psi(1/x^2) - \psi(\lambda^2)}} \/ ,
}
and so
\formula{
 \int_0^{\lambda_0} f_{n,t,x}(\lambda) d\lambda & \le \expr{e^{1 + 2 \catalan / \pi} + \frac{e}{k}} \int_0^{\psi(1/x^2)} \frac{z^{n - 1/2} e^{-t z}}{\sqrt{\psi(1/x^2) - z}} \, dz \\
 & = \expr{e^{1 + 2 \catalan / \pi} + \frac{e}{k}} \frac{1}{t^n} \, \int_0^{t \psi(1/x^2)} \frac{s^{n - 1/2} e^{-s}}{\sqrt{t \psi(1/x^2) - s}} \, ds .
}
By~\eqref{eq:auxintest}, with $c_1(n) = 2 \Gamma(n + 1/2) + \pi ((n + 1/2) / e)^{n + 1/2}$ and $c_2(n) = 2 (1/(n + 1/2) + 1)$,
\formula[eq:fptdensityest2]{
 \int_0^{\lambda_0} f_{n,t,x}(\lambda) d\lambda & \le \expr{e^{1 + 2 \catalan / \pi} + \frac{e}{k}} \frac{\min(c_1(n), c_2(n) (t \psi(1/x^2))^{n + 1/2})}{t^n \sqrt{2 t \psi(1/x^2)}} .
}
Hence, we found a two-sided estimate for the integral of $f_{n,t,x}(\lambda)$ over $(0, \lambda_0)$. The integral over $(\lambda_0, \infty)$ is highly oscillatory, and therefore difficult to estimate. For this reason, we are satisfied with a simple bound obtained using the inequality $|F_\lambda(x)| \le 2$,
\formula[eq:fptdensityest3]{
\begin{aligned}
 \frac{2}{\pi} \int_{\lambda_0}^\infty |f_{n,t,x}(\lambda)| d\lambda & \le \frac{4}{\pi} \int_{\lambda_0}^\infty (\psi(\lambda^2))^{n - 1/2} \sqrt{\psi'(\lambda^2)} e^{-t \psi(\lambda^2)} d\lambda \\ & = I_n(t, \lambda_0) .
\end{aligned}
}
The lower bound in~\eqref{eq:fptdensityest} is a consequence of~\eqref{eq:fptdensityest1} and \eqref{eq:fptdensityest3}, and the upper bound in~\eqref{eq:fptdensityest} follows from~\eqref{eq:fptdensityest2} and~\eqref{eq:fptdensityest3}.
\end{proof}

We remark that in the statement of the lemma, we can take
\formula{
 & C_1(n, \Theta) = \frac{e^{1 - 2 \catalan / \pi} \sqrt{1 - \pi^2 / 16} \, (\pi/4 - \Theta/2)^{2n + 1}}{2 \pi e (e + 1) (n + 1/2)} \ge \frac{(\pi/2 - \Theta)^{2n + 1}}{17 (n + 1/2) 2^{2n + 3}} \, , \\
 & C_2(n, \Theta) = \frac{e \sqrt{2}}{\pi} \expr{e^{2 \catalan / \pi} + \frac{2}{\pi/2 - \Theta}} \expr{2 \Gamma(n + 1/2) + \pi ((n + 1/2) / e)^{n + 1/2}} \le \frac{15 \, n!}{\pi/2 - \Theta} \, .
}
Note that $C_1(n, \Theta)$ decreases with $\Theta$, while $C_2(n, \Theta)$ increases with $\Theta$. The notation of Lemma~\ref{lem:fptdensityest}, namely $I_n(t_0, \lambda_0)$, $C_1(n, \Theta)$ and $C_2(n, \Theta)$, is kept in the remaining part of the section.

\begin{corollary}
\label{cor:fptdensityest}
Let $\eps > 0$, $n \ge 1$, $t_0 > 0$, $x_0 > 0$. If the conditions~\eqref{eq:fpt:a1} and~\eqref{eq:fpt:a2} hold true, then there are positive constants $C_3(n, \Theta)$, $C_4(n, \Theta)$, $C_5(\eps, n, \Theta, I_n(t_0, \lambda_0(x_0)))$ (here $\Theta = \sup_{\xi > 0} \thet_\lambda$ and $\lambda_0(x) = (\pi/2 - \Theta) / (2 x)$) such that
\formula[eq:fptdensityasympest]{
 \frac{C_3(n, \Theta)}{t^{n + 1/2} \sqrt{\psi(1/x^2)}} \le (-1)^n \, \frac{d^n}{dt^n} \, \pr(\tau_x > t) & \le \frac{C_4(n, \Theta)}{t^{n + 1/2} \sqrt{\psi(1/x^2)}}
}
when $x \in (0, x_0]$ and
\formula{
 t & \ge \max\expr{(1 + \eps) t_0, \frac{1}{\psi(1/x^2)}, \frac{C_5(\eps, n, \Theta, I_n(t_0, \lambda_0(x_0)))}{(\psi(1/x^2))^{1 + \eps}}} .
}
\end{corollary}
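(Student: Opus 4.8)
The plan is to deduce Corollary~\ref{cor:fptdensityest} from Lemma~\ref{lem:fptdensityest}, the only real work being to show that under the three lower bounds on $t$ the error term $I_n(t,\lambda_0(x))$ is negligible next to the main term $\bigl(t^{n+1/2}\sqrt{\psi(1/x^2)}\bigr)^{-1}$. By~\eqref{eq:fpt:a1} and Proposition~\ref{prop:thetaest2} we have $\Theta<\pi/2$, hence $k:=(\pi/2-\Theta)/2\in(0,\pi/4)$ and $\lambda_0(x)=k/x$. Since $t\ge 1/\psi(1/x^2)$ we have $t\,\psi(1/x^2)\ge1$, so $(\psi(1/x^2))^n\ge\bigl(t^{n+1/2}\sqrt{\psi(1/x^2)}\bigr)^{-1}$ and both minima in~\eqref{eq:fptdensityest} equal $\bigl(t^{n+1/2}\sqrt{\psi(1/x^2)}\bigr)^{-1}$. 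Thus Lemma~\ref{lem:fptdensityest} reduces the corollary to the single claim that, for $t$ beyond the third threshold,
\[
 I_n(t,\lambda_0(x))\ \le\ \tfrac12\,C_1(n,\Theta)\,\frac{1}{t^{n+1/2}\sqrt{\psi(1/x^2)}}\,,
\]
for then~\eqref{eq:fptdensityasympest} holds with, say, $C_3(n,\Theta)=\tfrac12 C_1(n,\Theta)$ and $C_4(n,\Theta)=C_2(n,\Theta)+\tfrac12 C_1(n,\Theta)$.

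To bound $I_n(t,\lambda_0(x))$ I would chain three elementary observations. First, by Remark~\ref{rem:remainder}(2), $I_n(t,\lambda_0)\le e^{-(t-t_0)\psi(\lambda_0^2)}I_n(t_0,\lambda_0)$. Second, the integrand defining $I_n$ is nonnegative, so $\lambda_0\mapsto I_n(t_0,\lambda_0)$ is nonincreasing; since $\lambda_0(x)=k/x\ge k/x_0=\lambda_0(x_0)$ for $x\le x_0$, this gives $I_n(t_0,\lambda_0(x))\le I_n(t_0,\lambda_0(x_0))$. Third, by Proposition~\ref{prop:cbf:ests}(a) (equivalently, concavity of $\psi$ with $\psi\ge0$), $\psi(\lambda_0(x)^2)=\psi(k^2/x^2)\ge k^2\,\psi(1/x^2)$. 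Combining these with $t-t_0\ge\tfrac{\eps}{1+\eps}\,t$ (which is exactly the condition $t\ge(1+\eps)t_0$) yields
\[
 I_n(t,\lambda_0(x))\ \le\ I_n(t_0,\lambda_0(x_0))\,\exp\Bigl(-\tfrac{\eps k^2}{1+\eps}\,t\,\psi(1/x^2)\Bigr)\,.
\]

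It then remains to choose $C_5$ so that $I_n(t_0,\lambda_0(x_0))\,t^{n+1/2}\sqrt{\psi(1/x^2)}\,e^{-\beta t\psi(1/x^2)}\le\tfrac12 C_1(n,\Theta)$ whenever $t\ge C_5/(\psi(1/x^2))^{1+\eps}$, where $\beta=\tfrac{\eps k^2}{1+\eps}$. Writing $s=t\,\psi(1/x^2)\ge1$ and $q=\psi(1/x^2)$, the third threshold reads $s\,q^\eps\ge C_5$, while $t^{n+1/2}\sqrt{q}=s^{n+1/2}q^{-n}$. If $q\le1$ this forces $s\ge C_5$ and $q^{-n}\le(s/C_5)^{n/\eps}$, so the quantity in question is at most $I_n(t_0,\lambda_0(x_0))\,C_5^{-n/\eps}\,s^{n+1/2+n/\eps}e^{-\beta s}\le I_n(t_0,\lambda_0(x_0))\,C_5^{-n/\eps}\,c(n,\eps,\Theta)$, using $\sup_{s>0}s^{a}e^{-\beta s}<\infty$. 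If $q\ge1$ then $q^{-n}\le1$, and combining $s\ge(1+\eps)t_0\,q$ with $s\,q^\eps\ge C_5$ gives $s\ge\max(A,B)\ge A^{\eps/(1+\eps)}B^{1/(1+\eps)}=((1+\eps)t_0)^{\eps/(1+\eps)}C_5^{1/(1+\eps)}$ (the powers of $q$ cancel), so $s^{n+1/2}e^{-\beta s}\to0$ as $C_5\to\infty$. In either case a sufficiently large $C_5$, depending only on $\eps$, $n$, $\Theta$ and $I_n(t_0,\lambda_0(x_0))$, makes the required inequality hold, and the corollary follows. The step I expect to be the main obstacle is precisely this last one: since $\psi(1/x^2)$ ranges over the unbounded interval $[\psi(1/x_0^2),\infty)$, the polynomial factor $t^{n+1/2}\sqrt{\psi(1/x^2)}$ cannot be controlled by a power of $s=t\,\psi(1/x^2)$ alone, and the extra exponent $\eps$ in the threshold $C_5/(\psi(1/x^2))^{1+\eps}$ is exactly the device that converts smallness of $\psi(1/x^2)$ into largeness of $s$; carrying out this bookkeeping so that $C_3$ and $C_4$ depend only on $n$ and $\Theta$ is where the care is needed.
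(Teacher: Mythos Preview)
Your approach is exactly the one in the paper: combine Lemma~\ref{lem:fptdensityest} with the exponential decay of $I_n$ from Remark~\ref{rem:remainder}, the inequality $\psi(\lambda_0(x)^2)\ge k^2\psi(1/x^2)$, and the monotonicity $I_n(t_0,\lambda_0(x))\le I_n(t_0,\lambda_0(x_0))$.

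The only difference is your case split $q\le1$ versus $q\ge1$, which is unnecessary. Your bound $q^{-n}\le(s/C_5)^{n/\eps}$ follows from $sq^\eps\ge C_5$ alone and is valid for \emph{all} $q>0$; the observation $s\ge C_5$ is never used (you take $\sup_{s>0}$ anyway). The paper does precisely your $q\le1$ computation in one line, writing $e^{-\beta s}\le c_2(\eps,n,\Theta)/s^{n+1/2+n/\eps}$ and noting that $s^{n/\eps}q^n=(t\,q^{1+\eps})^{n/\eps}\ge C_5^{n/\eps}$, with no restriction on $q$. Your $q\ge1$ branch is therefore redundant, and worse, its use of $t\ge(1+\eps)t_0$ to bound $s$ from below makes $C_5$ depend on $t_0$ directly, whereas the statement allows $C_5$ to depend only on $\eps,n,\Theta,I_n(t_0,\lambda_0(x_0))$. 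Dropping the case split fixes this and recovers the paper's proof verbatim.
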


\begin{proof}
This is a combination of Lemma~\ref{lem:fptdensityest} and Remark~\ref{rem:remainder}. As in the proof of Lemma~\ref{lem:fptdensityest}, we let $k = (\pi/2 - \Theta)/2$, so that $\lambda_0 = \lambda_0(x) = k / x$. Recall that since $k < 1$, we have $\psi(k^2/x^2) \ge k^2 \psi(1/x^2)$. It follows that for $n \ge 1$, $\eps > 0$, $x > 0$ and $t > (1 + \eps) t_0$, we have
\formula{
 \frac{I_n(t, \lambda_0)}{I_n(t_0, \lambda_0)} & \le e^{-(t - t_0) \psi(\lambda_0^2)} \le e^{-\eps / (1 + \eps) t \psi(k^2/x^2)} \\
 & \le e^{-k^2 \eps / (1 + \eps) t \psi(1/x^2)} \le \frac{c_2(\eps, n, \Theta)}{(t \psi(1/x^2))^{n + 1/2 + n / \eps}} \, .
}
Fix $A > 0$. When $t (\psi(1/x^2))^{1 + \eps} \ge A^\eps$, we obtain
\formula{
 I_n(t, \lambda_0) & \le \frac{c_2(\eps, n, \Theta)}{A^n t^{n + 1/2} \sqrt{\psi(1/x^2)}} \, I_n(t_0, \lambda_0) .
}
Hence, by~\eqref{eq:fptdensityest}, if $t \psi(1/x^2) \ge 1$, we have (with the constants $C_1(n, \Theta)$ and $C_2(n, \Theta)$ of Lemma~\ref{lem:fptdensityest})
\formula{
 \expr{C_1(n, \Theta) - \frac{c_2(\eps, n, \Theta) I_n(t_0, \lambda_0)}{A^n}} \frac{1}{t^{n + 1/2} \sqrt{\psi(1/x^2)}} & \le (-1)^n \, \frac{d^n}{dt^n} \, \pr(\tau_x > t) \\
 & \hspace{-13.2em} \le \expr{C_2(n, \Theta) + \frac{c_2(\eps, n, \Theta) I_n(t_0, \lambda_0)}{A^n}} \frac{1}{t^{n + 1/2} \sqrt{\psi(1/x^2)}} \/ .
}
When $x \in (0, x_0]$, then $I_n(t_0, \lambda_0(x)) \le I_n(t_0, \lambda_0(x_0))$. Hence, \eqref{eq:fptdensityasympest} holds if $A > (C_5(\eps, n, \Theta, I_n(t_0, \lambda_0(x_0))))^{1/\eps}$ for some $C_5$.
\end{proof}

\begin{proof}[Proof of Theorem~\ref{th:fptdensityest}]
Part~(a) follows directly from Corollary~\ref{cor:fptdensityest}. For part~(b), suppose that $|\psi''(\xi)| / \psi'(\xi) \le \ro / \xi$ for some $\ro \in [0, 1)$ and all $\xi > 0$. By Proposition~\ref{prop:thetaest2}, with the notation of Lemma~\ref{lem:fptdensityest}, we have $\Theta \le \ro \pi / 4$.

Integrating the inequality $-\psi''(\xi) / \psi'(\xi) \le \ro / \xi$, we obtain $\psi'(\xi_1) / \psi'(\xi_2) \le (\xi_2 / \xi_1)^\ro$ if $0 < \xi_1 < \xi_2$. Integrating this again in $\xi_1$ gives $\psi(\xi) / \psi'(\xi) \le \xi / (1 - \ro)$. Hence, for all $t > 0$ and $\lambda_0 > 0$,
\formula{
 I_n(t, \lambda_0) & \le \frac{4}{\pi} \int_{\lambda_0}^\infty e^{-t \psi(\lambda^2)} \, \frac{\lambda \psi'(\lambda^2)}{\sqrt{(1 - \ro) \psi(\lambda^2)}} \, (\psi(\lambda^2))^{n - 1/2} d\lambda \\
 & = \frac{2}{\pi \sqrt{1 - \ro} \, t^n} \int_{\lambda_0}^\infty e^{-t \psi(\lambda^2)} (t \psi(\lambda^2))^{n - 1} (2 \lambda t \psi'(\lambda^2)) d\lambda \\
 & = \frac{2}{\pi \sqrt{1 - \ro} \, t^n} \, \Gamma(n; t \psi(\lambda_0^2)) ,
}
where $\Gamma(a; z)$ is the upper incomplete Gamma function. In particular, $I_n(t, \lambda_0)$ is finite, and~\eqref{eq:fpt:a2} holds true.

Let $k = (\pi/2 - \Theta) / 2 \in (\pi/8, \pi/4)$, and take $\lambda_0 = \lambda_0(x) = k / x$, as in Lemma~\ref{lem:fptdensityest}. Recall that since $k < 1$, we have $\psi(\lambda_0^2) \ge k^2 \psi(1 / x^2)$. Hence,
\formula[eq:inpowerest]{
\begin{aligned}
 I_n(t, \lambda_0) & \le \frac{2}{\pi \sqrt{1 - \ro}} \, \frac{\Gamma(n; k^2 t \psi(1 / x^2))}{t^n} \\
 & = \frac{2}{\pi \sqrt{1 - \ro}} \, \frac{\sqrt{t \psi(1 / x^2)} \, \Gamma(n; (\pi/8)^2 t \psi(1 / x^2))}{t^{n + 1/2} \sqrt{\psi(1 / x^2)}} \, .
\end{aligned}
}
The constant $c_3(n, \ro)$ in the statement of the theorem is so chosen that for all $s \ge c_3(n, \ro)$,
\formula{
 \frac{2}{\pi \sqrt{1 - \ro}} \, \sqrt{s} \, \Gamma(n; (\pi/8)^2 s) \le \frac{\min(C_1(n, \pi/4), C_2(n, \pi/4))}{2} \, ,
}
with the constants $C_1(n, \Theta)$ and $C_2(n, \Theta)$ from Lemma~\ref{lem:fptdensityest}. Then, by Lemma~\ref{lem:fptdensityest}, when $t \psi(1 / x^2) \ge c_3(n, \ro)$ we have
\formula{
 \frac{C_1(n, \pi/4)}{2 t^{n + 1/2} \sqrt{\psi(1/x^2)}} & \le (-1)^n \, \frac{d^n}{dt^n} \, \pr(\tau_x > t) \le \frac{C_2(n, \pi/4)}{2 t^{n + 1/2} \sqrt{\psi(1/x^2)}} \/ ,
}
as desired. Finally, by~\eqref{eq:inpowerest}, we have $I_n(t, \lambda_0) \le c(n, \ro) / (t^{n + 1/2} \sqrt{\psi(1 / x^2)})$, where $c(n, \ro) = 2 \pi^{-1} (1 - \ro)^{-1/2} \sup_{s > 0} (s^{1/2} \Gamma(n; (\pi / 8)^2 s))$. This and Lemma~\ref{lem:fptdensityest} prove that the upper bound in~\eqref{eq:fptpowerest} holds for all $t, x > 0$, but with a constant depending on $\ro$.
\end{proof}

\begin{remark}
\label{rem:fptdensityest}
\begin{enumerate}
\item[(a)] The strict inequality $\ro < 1$ is essential to the proof. The case $\ro = 1$ is much more complicated; see the example in Subsection~\ref{subsec:geomstable}.
\item[(b)] Theorem~\ref{th:fptdensityest}(b) can be easily generalised to the case when $|\psi''(\xi)| / \psi'(\xi) \le \ro / \xi$ only for $\xi \in (0, \eps) \cup (1 / \eps, \infty)$, with $\eps \in (0, 1)$ fixed. In this case the constant $c_3 = c_3(n, \ro, \eps)$ depends also on $\eps$. We omit the details.
\item[(c)] The upper bound in~\eqref{eq:fptpowerest} is certainly not optimal when $t \psi(1 / x^2)$ is small. This is due to essential cancellations in~\eqref{eq:fpt}.\qed
\end{enumerate}
\end{remark}

\begin{proof}[Proof of Theorem~\ref{th:fptdensityasymp}(a)]
We use the notation of Lemma~\ref{lem:fptdensityest}, and take $\lambda_0 = \lambda_0(x) = (\pi/2 - \Theta) / (2 x)$ for a fixed $x > 0$. Let $f_{n,t,x}(\lambda)$ be the integrand in~\eqref{eq:fpt}. We have
\formula{
 & t^{n + 1/2} \int_0^{\lambda_0} f_{n,t,x}(\lambda) d\lambda = \int_0^{\lambda_0} \frac{F_\lambda(x)}{\lambda \sqrt{\psi'(\lambda^2)}} \, t^{n + 1/2}  (\psi(\lambda^2))^{n - 1/2} e^{-t\psi(\lambda^2)} \lambda \psi'(\lambda^2) d\lambda\/.
}
By Proposition~\ref{prop:flambdalimit}, $\lim_{\lambda \to 0} F_\lambda(x) / (\lambda \sqrt{\psi'(\lambda^2)}) = V(x)$, and therefore $F_\lambda(x) / (\lambda \sqrt{\psi'(\lambda^2)})$ extends to a continuous function of $\lambda \in [0, \lambda_0]$. Let
\formula{
 \mu_t(d\lambda) = t^{n + 1/2} (\psi(\lambda^2))^{n - 1/2} e^{-t\psi(\lambda^2)} \lambda \psi'(\lambda^2) \ind_{[0, \lambda_0]}(\lambda) d\lambda .
}
As $t \to \infty$, the density function of $\mu_t$ converges uniformly to $0$ on $[\eps, \lambda_0]$ for every $\eps > 0$. Hence, $\mu_t(d\lambda)$ converges weakly to a point-mass at $0$. Furthermore, by a substitution $z = t \psi(\lambda^2)$,
\formula{
 \|\mu_t\| & = \frac{1}{2} \int_0^{t \psi(\lambda_0^2)} z^{n - 1/2} e^{-z} dz = \frac{\gamma(n + 1/2; t \psi(\lambda_0^2))}{2} \/ ,
}
and hence $\|\mu_t\|$ converges to $\Gamma(n + 1/2) / 2$ as $t \to \infty$. It follows that
\formula{
 \lim_{t \to \infty} \expr{t^{n + 1/2} \int_0^{\lambda_0} f_{n,t,x}(\lambda) d\lambda} & = \frac{\Gamma(n + 1/2)}{2} \, V(x) .
}
Finally, by~\eqref{eq:fptdensityest3} and Remark~\ref{rem:remainder},
\formula{
 \lim_{t \to \infty} \abs{t^{n + 1/2} \, \frac{4}{\pi} \int_{\lambda_0}^\infty f_{n,t,x}(\lambda) d\lambda} & \le \lim_{t \to \infty} \expr{t^{n + 1/2} I_n(t, \lambda_0)} = 0,
}
and so~\eqref{eq:fptdensityasymp} follows by~\eqref{eq:fpt}. The convergence is locally uniform, since the extension of $F_\lambda(x) / (\lambda \sqrt{\psi'(\lambda^2)})$ is jointly continuous in $\lambda \ge 0$ and $x \ge 0$ (Proposition~\ref{prop:flambdalimit}), and $t^{n + 1/2} I_n(t, \lambda_0(x))$ converges to $0$ locally uniformly in $x \ge 0$.
\end{proof}

\begin{remark}
Alternatively, one can deduce formula~\eqref{eq:fptdensityasymp} (without uniformity in $x$) as follows. In~\cite{bib:gn86} it was proved that $\sqrt{t} \, \pr(\tau_x > t)$ converges to $V(x) / \sqrt{\pi}$ as $t \to \infty$. By Theorem~\ref{th:fptdensityest}(a), $\pr(\tau_x > t)$ is ultimately completely monotone. As it was observed in~\cite{bib:ds10}, Remark~4, this already implies formula~\eqref{eq:fptdensityasymp} for all $n \ge 0$, see~\cite{bib:bgt87} (we omit the details).\qed
\end{remark}

\begin{proof}[Proof of Theorem~\ref{th:fptdensityasymp}(b)]
The argument is similar to the proof of part~(a) of the theorem. Again we use the notation of the proof of Lemma~\ref{lem:fptdensityest}. Let $f_{n,t,x}(\lambda)$ be the integrand in~\eqref{eq:fpt}. Fix $t > t_0$. We have
\formula{
 & \sqrt{\psi(1/x^2)} \int_0^\infty f_{n,t,x}(\lambda) d\lambda \\
 & \qquad = \int_0^\infty \frac{\sqrt{\psi(1/x^2)} \, F_\lambda(x)}{\lambda \sqrt{\psi'(\lambda^2)}} \, (\psi(\lambda^2))^{n - 1/2} e^{-t\psi(\lambda^2)} \lambda \psi'(\lambda^2) d\lambda .
}
By~\eqref{eq:flambdaregular}, $\lim_{x \to 0^+} \sqrt{\psi(1/x^2)} \, F_\lambda(x) / (\lambda \sqrt{\psi'(\lambda^2)}) = 1 / \Gamma(1 + \alpha)$. We will use dominated convergence for the integral over an initial interval $(0, B)$, and a simple uniform bound on the remaining interval $[B, \infty)$.

Let $B > 0$. Consider $x$ small enough, so that $\lambda x \le (\pi/2 - \Theta) / 2$ and $\psi(1/x^2) \ge 2 \psi(\lambda^2)$ for $\lambda \in (0, B)$ (recall that $\psi$ is unbounded). By Lemma~\ref{lem:flambdaest}, for $\lambda \in (0, B)$,
\formula{
 \frac{\sqrt{\psi(1/x^2)} \, F_\lambda(x)}{\lambda \sqrt{\psi'(\lambda^2)}} & \le c(\Theta) \, \sqrt{\frac{\lambda^2 x^2 \psi(1/x^2) \psi_\lambda(1/x^2)}{\psi'(\lambda^2) \psi_\lambda(\lambda^2)}} \\
 & = c(\Theta) \, \sqrt{\frac{\psi(1/x^2) (1 - \lambda^2 x^2)}{\psi(1/x^2) - \psi(\lambda^2)}} \\
 & \le c(\Theta) \, \frac{1}{\sqrt{1 - \psi(\lambda^2) / \psi(1/x^2)}} \le \sqrt{2} \, c(\Theta) .
}
Hence, by dominated convergence,
\formula{
 & \lim_{x \to 0^+} \expr{\sqrt{\psi(1/x^2)} \int_0^B f_{n,t,x}(\lambda) d\lambda} \\
 & \qquad = \frac{1}{\Gamma(1 + \alpha)} \int_0^B (\psi(\lambda^2))^{n - 1/2} e^{-t \psi(\lambda^2)} \lambda \psi'(\lambda^2) d\lambda .
}
More precisely, we have
\formula{
 & \lim_{x \to 0^+} \int_0^B \abs{\sqrt{\psi(1/x^2)} f_{n,t,x}(\lambda) - \frac{(\psi(\lambda^2))^{n - 1/2} e^{-t \psi(\lambda^2)} \lambda \psi'(\lambda^2)}{\Gamma(1 + \alpha)}} d\lambda = 0 ,
}
and the convergence is uniform in $t > t_0$, due to monotonicity of of the integrand in $t > t_0$. On the other hand,
\formula{
 \abs{\frac{2}{\pi} \int_B^\infty f_{n,t,x}(\lambda) d\lambda} & \le I_n(t_0, B) ,
}
which converges to $0$ as $B \to \infty$, uniformly in $x$ and $t > t_0$ (by Remark~\ref{rem:remainder}). Hence, by a substitution $z = t \psi(\lambda^2)$,
\formula{
 \hspace*{3em} & \hspace*{-3em} \lim_{x \to 0^+} \expr{\sqrt{\psi(1/x^2)} \int_0^\infty f_{n,t,x}(\lambda) d\lambda} \\
 & = \frac{1}{\Gamma(1 + \alpha)} \int_0^\infty (\psi(\lambda^2))^{n - 1/2} e^{-t\psi(\lambda^2)} \lambda \psi'(\lambda^2) d\lambda \\
 & \hspace{-0em} = \frac{1}{2 \Gamma(1 + \alpha) t^{n + 1/2}} \int_0^\infty z^{n - 1/2} e^{-z} dz = \frac{\Gamma(n + 1/2)}{2 \Gamma(1 + \alpha) t^{n + 1/2}} \/ .
}
Formula~\eqref{eq:fptdensityregular} follows now from~\eqref{eq:fpt}.
\end{proof}

%
%                            ---------- o ----------
%

\section{Examples}
\label{sec:examples}

\subsection{Symmetric stable processes}

These processes, corresponding to $\Psi(\xi) = |\xi|^\alpha$ with $\alpha \in (0, 2]$, have already been studied in Example~6.1 in~\cite{bib:mk10}. In this case $\thet_\lambda = (2 - \alpha) \pi/8$, and Theorem~\ref{th:fpt} reads
\formula[eq:stable1]{
 (-1)^n \frac{d^n}{dt^n} \, \pr_x(\tau_x > t) & = \frac{\sqrt{2 \alpha}}{\pi} \int_0^\infty \lambda^{n \alpha - 1} e^{-t \lambda^\alpha} F(\lambda x) d\lambda , && t , x > 0 ,
}
with (see Example~6.1 in~\cite{bib:mk10})
\formula[eq:stable2]{
\begin{aligned}
 F(\lambda x) & = \sin(\lambda x + (2 - \alpha) \pi/8) - \frac{\sqrt{2 \alpha} \, \sin (\alpha \pi / 2)}{2 \pi} \int_0^\infty \frac{s^\alpha}{1 + s^{2 \alpha} - 2 s^\alpha \cos (\alpha \pi / 2)} \\
 & \hspace*{11em} \times \exp\expr{\frac{1}{\pi} \int_0^\infty \frac{1}{1 + \zeta^2} \, \log \frac{1 - s^2 \zeta^2}{1 - s^\alpha \zeta^\alpha} \, d\zeta} e^{-\lambda s x} ds .
\end{aligned}
}
Note that all above integrands are highly regular functions (for example, $\log ((1 - s^2 \zeta^2) / (1 - s^\alpha \zeta^\alpha))$ is a complete Bernstein function), and thus~\eqref{eq:stable2} is suitable for numerical integration. With a little effort, explicit upper bounds for numerical errors can also be computed. Together with~\eqref{eq:stable1}, this gives \emph{faithful} numerical bounds for $\pr(\tau_x > t)$ and its derivatives in $t$. Plots of cumulative distribution function and density function of $\tau_x$ obtained using this method are given in Figure~\ref{fig:stable}.

\begin{sidewaysfigure}
\centering
\begin{tabular}{c@{}c}
\raisebox{\dimexpr-\height+\ht\strutbox\relax}{\parbox{0.49\textwidth}{\centering\includegraphics[width=0.45\textwidth]{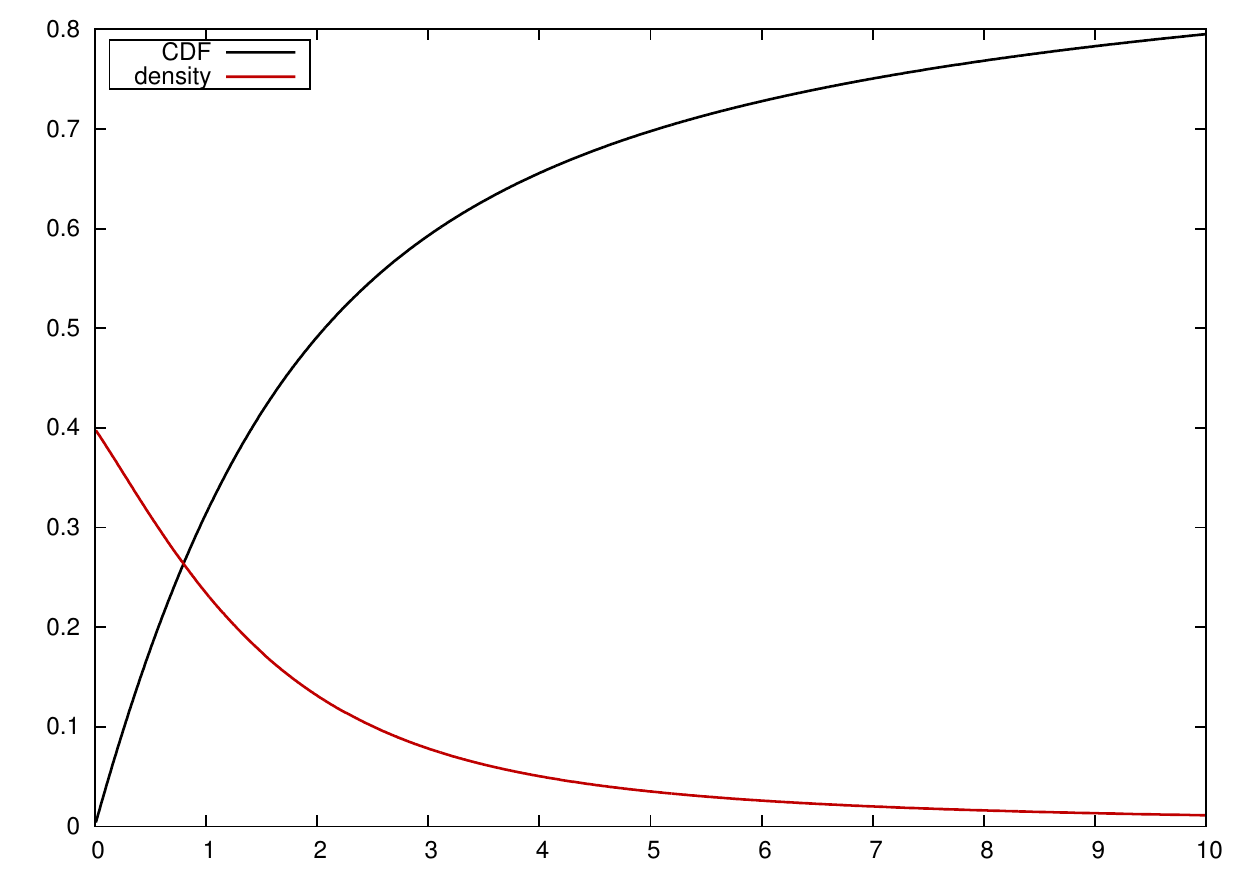}\\{\small (a)}}}&
\raisebox{\dimexpr-\height+\ht\strutbox\relax}{\parbox{0.49\textwidth}{\centering\includegraphics[width=0.45\textwidth]{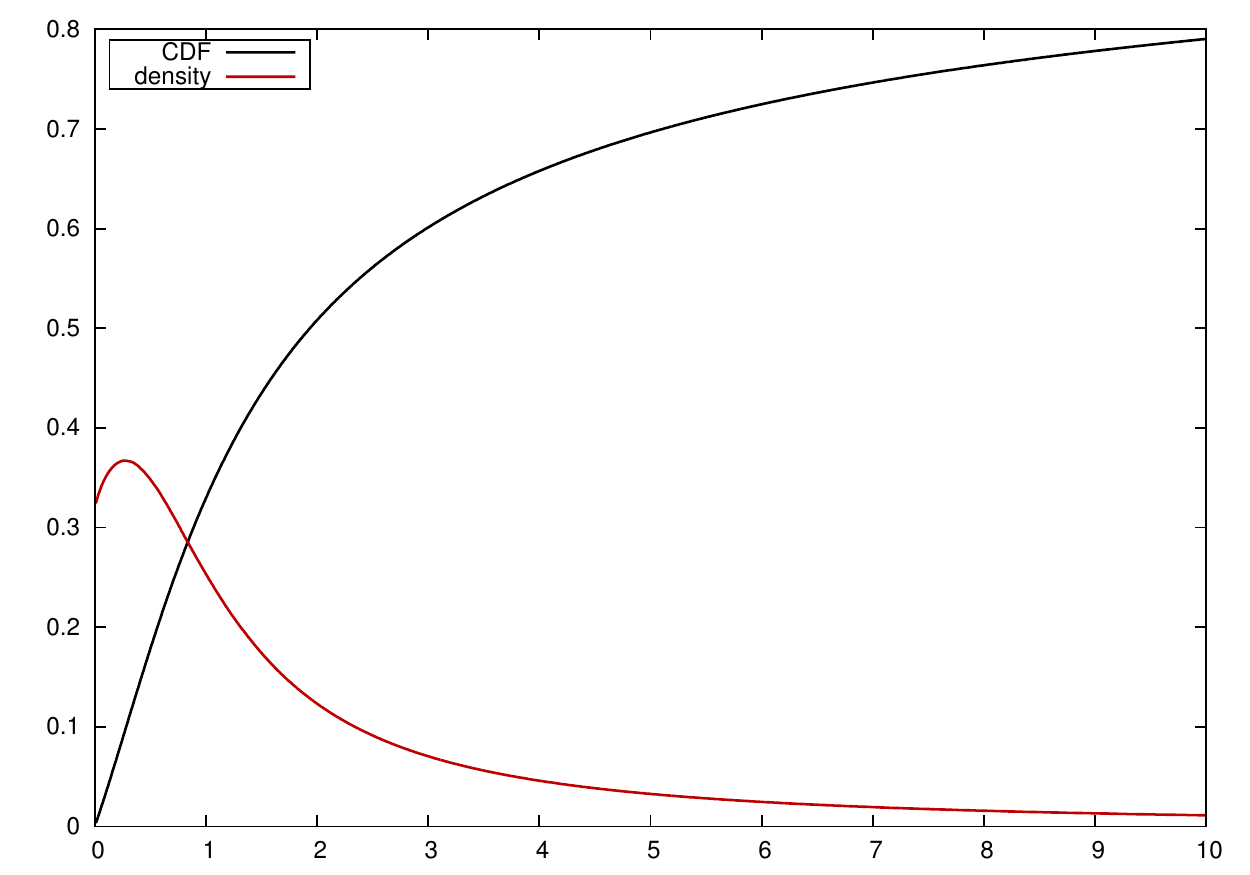}\\{\small (b)}}}\\
\raisebox{\dimexpr-\height+\ht\strutbox\relax}{\parbox{0.49\textwidth}{\centering\includegraphics[width=0.45\textwidth]{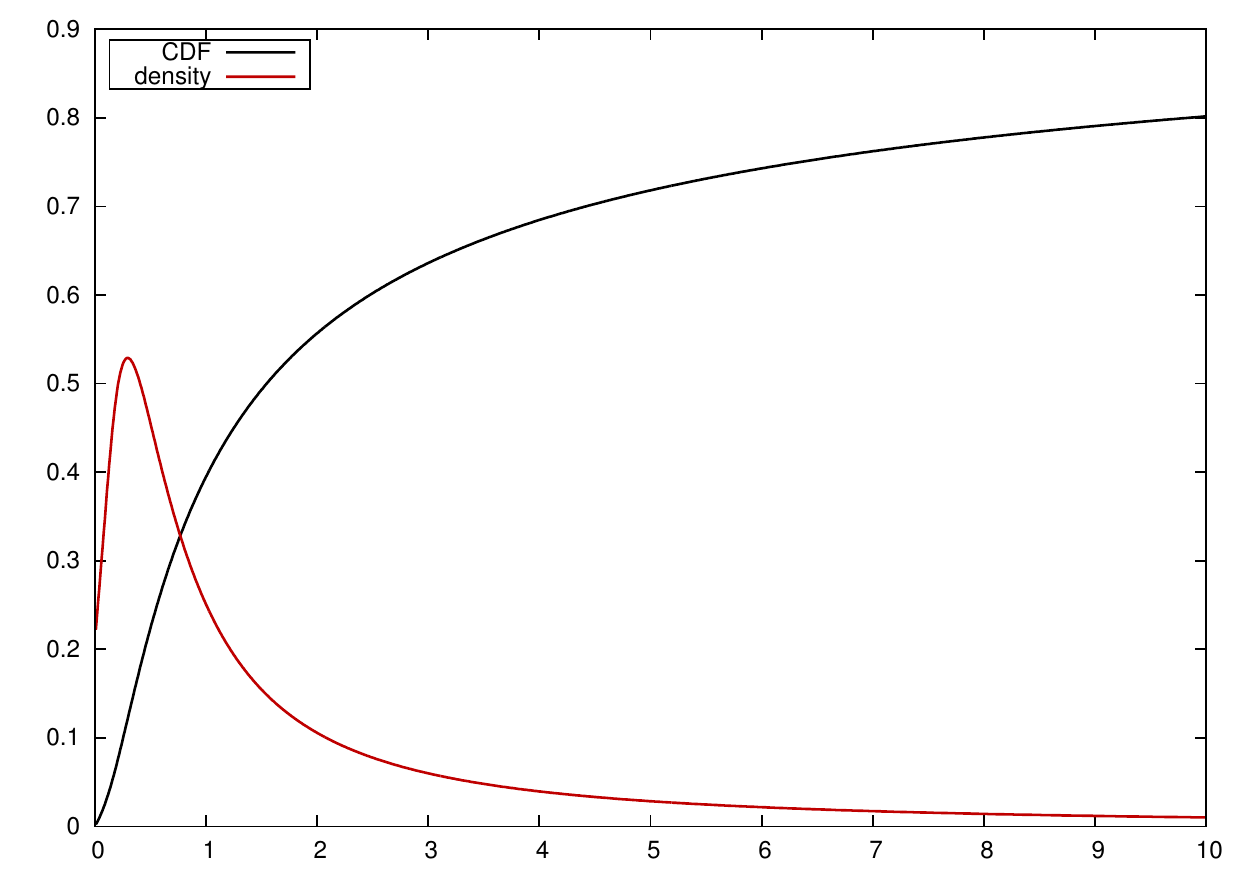}\\{\small (c)}}}&
\raisebox{\dimexpr-\height+\ht\strutbox\relax}{\parbox{0.49\textwidth}{\centering
\caption{\label{fig:stable} Plots of $\pr(\tau_x < t)$ (black) and $\frac{d}{dt} \pr(\tau_x < t)$ (red) for the symmetric $\alpha$-stable L\'evy process, computed with three digits of accuracy, for (a) $\alpha = 0.5$; (b) $\alpha = 1.0$; (c) $\alpha = 1.5$. Calculations are based on Theorems~\ref{th:fpt} and~\ref{th:eigenfunctions} and the following numerical integration scheme. \\[0.1em] Suppose that $f$ is integrated over an interval $I$. First, using \emph{analytical} methods, $f$ is bounded above and below on any subinterval $I'$ by a simpler function (e.g. a polynomial), which is integrated analytically. This yields lower and upper bounds for the integral of $f$ over $I'$. A local adaptive strategy is used to divide $I$ into sub-intervals $I'$ so that the total error on all sub-intervals does not exceed given level. Interval arithmetic is used for nested integrals. \\[0.1em] Plots prepared using \texttt{gnuplot} and a \texttt{C} program with 80-bit precision floating point numbers.}}}
\end{tabular}
\end{sidewaysfigure}

\subsection{Processes with power-type L\'evy-Khintchine exponent}

In this example, we assume that the L\'evy-Khintchine exponent $\Psi(\xi)$ has the form $\Psi(\xi) = \psi(\xi^2)$ for a complete Bernstein function $\psi$, and $\Psi$ is regularly varying of \emph{positive} order $\alpha_0 > 0$ at zero, and of \emph{positive} order $\alpha_\infty > 0$ at $\pm \infty$ (see the first part of Table~\ref{tab:cbf} for some examples). Clearly, in this case $\psi(\xi)$ is regularly varying of orders $\alpha_0/2$ and $\alpha_\infty/2$ at $0$ and $\infty$, respectively. Hence, by Karamata's theory of regularly varying functions (see~\cite{bib:bgt87}), $-\xi \psi''(\xi) / \psi'(\xi)$ converges to $1 - \alpha_0/2$ and $1 - \alpha_\infty/2$ as $\xi \to 0^+$ and $\xi \to \infty$, respectively. 

For simplicity, we assume in addition that the supremum in~\eqref{eq:fpt:a1} is less than one. This condition is satisfied by all examples given in the first part of Table~\ref{tab:cbf}. Note, however, that the condition given in Remark~\ref{rem:fptdensityest}(b) is automatically satisfied, so that our extra assumption can be easily dropped by referring to an improved version of Theorem~\ref{th:fptdensityest}, alluded to in Remark~\ref{rem:fptdensityest}(b).

Theorem~\ref{th:fptdensityest}(b) yields the estimate
\formula{
 \frac{c_1(n)}{t^{n + 1/2} \sqrt{\psi(1 / x^2)}} & \le (-1)^n \, \frac{d^n}{dt^n} \, \pr(\tau_x > t) \le \frac{c_2(n)}{t^{n + 1/2} \sqrt{\psi(1 / x^2)}}
}
for $n \ge 0$, $x > 0$ and $t \ge c_3(n, \alpha) / \psi(1 / x^2)$. Furthermore, by Theorem~\ref{th:fptdensityasymp},
\formula{
 \lim_{x \to 0^+} \expr{\sqrt{\psi(1/x^2)} \, \frac{d^n}{dt^n} \, \pr(\tau_x > t)} & = \frac{(-1)^n \Gamma(n + 1/2)}{\pi \Gamma(1 + \alpha_\infty/2)} \, \frac{1}{t^{n + 1/2}} \/, && t > 0, \, n \ge 0 , \\
 \lim_{t \to \infty} \expr{t^{n + 1/2} \, \frac{d^n}{dt^n} \, \pr(\tau_x > t)} & = \frac{(-1)^n \Gamma(n + 1/2)}{\pi} \, V(x) , && x > 0 , \, n \ge 0.
}

\begin{table}
\centering\small
\begin{tabular}{cccp{12em}p{10em}}
 \hline\hline
 $\Psi(\xi)$ & $\alpha_0$ & $\alpha_\infty$ & \multicolumn{1}{c}{$X_t$} & \multicolumn{1}{c}{restrictions} \\
 \hline
 $\xi^2$ & $2$ & $2$ & Brownian motion & \\
 $|\xi|^\alpha$ & $\alpha$ & $\alpha$ & $\alpha$-stable & $\alpha \in (0,2]$ \\
 $c_1 |\xi|^\alpha + c_2 |\xi|^\beta$ & $\alpha$ & $\beta$ & (sum of two stables) & $\alpha, \beta \in (0,2]$, $\alpha < \beta$ \\
 $(\xi^2+m^{2/\alpha})^{\alpha/2}-m$ & $2$ & $\alpha$ & relativistic $\alpha$-stable & $\alpha \in (0,2)$, $m > 0$ \\
 $((\xi^2+1)^{\alpha/\beta}-1)^{\beta/2}$ & $\beta$ & $\alpha$ & (subordinate relat. stable) & $\alpha, \beta \in (0,2]$, $\alpha < \beta$ \\
 \hline
 $\log(1 + \xi^2)$ & $2$ & $0$ & variance gamma & \\
 $\log(1 + |\xi|^\alpha)$ & $\alpha$ & $0$ & geometric stable & $\alpha \in (0,2]$ \\
 $1 / \log(1 + 1/|\xi|^\alpha)$ & $0$ & $\alpha$ & (not named) & $\alpha \in (0,2]$ \\
 $\log(1 + 1 / \log(1 + 1/\xi^2))$ & $0$ & $0$ & (not named) & \\
 \hline
 $\xi^2 / (1 + \xi^2)$ & $2$ & $0$ & (compound Poisson) & \\
 \hline\hline
\end{tabular}
\bigskip
\caption{Some L\'evy-Khintchine exponents $\Psi(\xi)$, regularly varying both at $0$ (of order $\alpha_0$) and at $\pm\infty$ (of order $\alpha_\infty$). Names of corresponding subordinate Brownian motions are given in column $X_t$. First part of the table contains power-type functions $\Psi$, more singular examples are given in the other parts.}
\label{tab:cbf}
\end{table}

\subsection{Slowly varying L\'evy-Khintchine exponents}
\label{subsec:geomstable}

When the L\'evy-Khintchine exponent $\Psi(\xi)$ has the form $\Psi(\xi) = \psi(\xi^2)$ for a complete Bernstein function $\psi$, and $\Psi$ is regularly varying of order $\alpha_0$ at zero, and of order $\alpha_\infty$ at $\pm \infty$, but at least one of $\alpha_0$, $\alpha_\infty$ is zero, estimates of the distribution of $\tau_x$ become more delicate. In this case Theorem~\ref{th:fptdensityest} cannot be applied, and one needs to refer to either Corollary~\ref{lem:fptdensityest} (which is fairly straightforward, but typically yields sub-optimal results) or the technical Lemma~\ref{lem:fptdensityest}. Note that only $n \ge 1$ need to be considered, as $n = 0$ was studied in general in~\cite{bib:kmr11}.

Suppose that $|\psi''(\xi)| / \psi'(\xi) \le 1 / \xi$ for all $\xi > 0$ (that is, $\ro = 1$ in Theorem~\ref{th:fptdensityest}(b)); this condition is satisfied by all processes in the second part of Table~\ref{tab:cbf}. Then $\Theta = \sup_{\lambda > 0} \thet_\lambda \le \pi / 4$ by Proposition~\ref{prop:thetaest2}. Furthermore, by integration, $\psi'(\xi_1) / \psi'(\xi_2) \le \xi_2 / \xi_1$ when $0 < \xi_1 < \xi_2$, and therefore (cf. the proof of Theorem~\ref{th:fptdensityest}(b))
\formula[eq:geomstable0]{
\begin{aligned}
 I_n(t, \lambda_0) & \le \frac{2}{\pi \sqrt{\lambda_0^2 \psi'(\lambda_0^2)}} \int_{\lambda_0}^\infty e^{-t \psi(\lambda^2)} (\psi(\lambda^2))^{n - 1/2} (2 \lambda \psi'(\lambda^2)) d\lambda \\
 & = \frac{2}{\pi} \, \frac{\Gamma(n + 1/2; t \psi(\lambda_0^2))}{t^{n + 1/2} \sqrt{\lambda_0^2 \psi'(\lambda_0^2)}} \, ,
\end{aligned}
}
where $\Gamma(a; z)$ is the upper incomplete gamma function. However, $\lambda_0^2 \psi'(\lambda_0^2)$ is no longer comparable with $\psi(\lambda_0^2)$. Nevertheless, we can combine~\eqref{eq:geomstable0} with Lemma~\ref{lem:fptdensityest}, to find that
\formula{
\begin{aligned}
 \frac{C_1(n, \pi/4)}{2} \, \frac{1}{t^{n+1/2} \sqrt{\psi(1/x^2)}} & \le (-1)^n \frac{d^n}{dt^n} \, \pr(\tau_x > t) \\
 & \le \expr{C_1(n, \pi/4) + \frac{C_2(n, \pi/4)}{2}} \frac{1}{t^{n+1/2} \sqrt{\psi(1/x^2)}} \, ,
\end{aligned}
}
provided that $t \psi(1/x^2) \ge 1$ (so that the minimum in~\eqref{eq:fptdensityest} is $1 / (t^{n + 1/2} (\psi(1 / x^2))^{1/2})$) and
\formula[eq:geomstable2]{
 \Gamma(n + 1/2; t \psi((\lambda_0(x))^2)) & \le \frac{\pi C_1(n, \pi/4)}{4} \, \sqrt{\frac{(\lambda_0(x))^2 \psi'((\lambda_0(x))^2)}{\psi(1 / x^2)}} \, .
}
Here $C_1(n, \Theta)$, $C_2(n, \Theta)$ are the constants of Lemma~\ref{lem:fptdensityest}, and $\lambda_0(x) = \pi / (8 x)$.

Note that $\Gamma(n + 1/2; z) \le c(n) e^{-z / 2}$ for $z > 0$ for some $c(n)$. Furthermore, $\psi'$ is a decreasing function. After some simplification (we omit the details), this gives the following sufficient condition for~\eqref{eq:geomstable2}:
\formula{
 t \psi(1 / x^2) & \ge c'(n) \expr{1 + \log \expr{1 + \frac{\psi(1 / x^2)}{(1 / x^2) \psi'(1 / x^2)}}}
}
for some $c'(n)$. For convenience, we state this as a separate result.

\begin{proposition}
\label{prop:geomstable}
If $|\psi''(\xi)| / \psi'(\xi) \le 1 / \xi$ for all $\xi > 0$, then there are positive constants $c_1(n)$, $c_2(n)$, $c_3(n)$ such that
\formula[eq:geomstable1]{
 \frac{c_1(n)}{t^{n+1/2} \sqrt{\psi(1/x^2)}} & \le (-1)^n \frac{d^n}{dt^n} \, \pr(\tau_x > t) \le \frac{c_2(n)}{t^{n+1/2} \sqrt{\psi(1/x^2)}}
}
for $n \ge 0$, $x > 0$ and
\formula[eq:geomstable3]{
 t & \ge \frac{c_3(n)}{\psi(1 / x^2)} \expr{1 + \log \expr{1 + \frac{\psi(1 / x^2)}{(1 / x^2) \psi'(1 / x^2)}}} . \qed
}
\end{proposition}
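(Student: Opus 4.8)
The plan is to assemble the estimates already prepared in the discussion preceding the statement, so that the only genuinely new work is to verify that~\eqref{eq:geomstable3} implies condition~\eqref{eq:geomstable2} (together with $t\psi(1/x^2)\ge 1$).

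First, for $n=0$ there is nothing to do beyond~\eqref{eq:fptest} (Theorem~4.6 in~\cite{bib:kmr11}): I would simply take $c_3(0)\ge 1$, so that~\eqref{eq:geomstable3} forces $t\psi(1/x^2)\ge 1$; then the minimum in~\eqref{eq:fptest} equals $(t\psi(1/x^2))^{-1/2}$ and~\eqref{eq:geomstable1} follows at once. For $n\ge 1$ I would recall the chain of estimates set up above under the hypothesis $|\psi''(\xi)|/\psi'(\xi)\le 1/\xi$: Proposition~\ref{prop:thetaest2} gives $\Theta=\sup_{\lambda>0}\thet_\lambda\le \pi/4$, and one integration gives $\psi'(\xi_1)/\psi'(\xi_2)\le \xi_2/\xi_1$ for $0<\xi_1<\xi_2$, equivalently $\xi\psi'(\xi)$ nondecreasing; this yields the bound~\eqref{eq:geomstable0} on the remainder $I_n(t,\lambda_0)$, and substituting it into Lemma~\ref{lem:fptdensityest} with $\lambda_0(x)=\pi/(8x)$ (legitimate since $\Theta\le\pi/4$ makes the cutoff of Lemma~\ref{lem:fptdensityest} at least $\pi/(8x)$, and $I_n$ is decreasing in the lower limit) reduces~\eqref{eq:geomstable1} to checking $t\psi(1/x^2)\ge 1$ and~\eqref{eq:geomstable2}.

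The key step is then purely computational. I would derive~\eqref{eq:geomstable2} from~\eqref{eq:geomstable3} using a handful of elementary comparisons with absolute constants. Since $\lambda_0(x)^2=(\pi/8)^2 x^{-2}<x^{-2}$, concavity and nonnegativity of $\psi$ give $\psi(\lambda_0(x)^2)\ge (\pi/8)^2\psi(1/x^2)$, and, as $\psi'$ is nonincreasing, $\lambda_0(x)^2\psi'(\lambda_0(x)^2)\ge (\pi/8)^2(1/x^2)\psi'(1/x^2)$. Hence the left side of~\eqref{eq:geomstable2} is at most $\Gamma(n+\tfrac12;(\pi/8)^2 t\psi(1/x^2))\le c(n)\exp(-(\pi/8)^2 t\psi(1/x^2)/2)$, by the elementary tail estimate $\Gamma(a;z)\le c(a)e^{-z/2}$, while the right side is bounded below by a constant multiple of $\sqrt{(1/x^2)\psi'(1/x^2)/\psi(1/x^2)}$. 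By Proposition~\ref{prop:cbf:ests}(a) the ratio $\psi(1/x^2)/((1/x^2)\psi'(1/x^2))$ is at least $1$, so its logarithm lies in $[0,\log(1+\psi(1/x^2)/((1/x^2)\psi'(1/x^2)))]$; taking logarithms in the resulting sufficient inequality and solving for $t$ produces exactly a bound of the form~\eqref{eq:geomstable3}. Enlarging $c_3(n)$ if need be, one also arranges that~\eqref{eq:geomstable3} forces $t\psi(1/x^2)\ge 1$, since the bracketed factor there is at least $1$; the two-sided bound displayed above then gives~\eqref{eq:geomstable1}.

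I do not expect a real obstacle here beyond careful bookkeeping. The point to watch is that, because we are at the borderline case $\ro=1$ in~\eqref{eq:fpt:a1}, the quantity $\lambda_0^2\psi'(\lambda_0^2)$ is no longer comparable with $\psi(\lambda_0^2)$; this is precisely why taking logarithms leaves behind the extra $\log(1+\psi(1/x^2)/((1/x^2)\psi'(1/x^2)))$ factor rather than an absolute constant, and care is needed to make this term come out in exactly the shape stated in~\eqref{eq:geomstable3}, and to keep all the comparison constants (coming only from $\pi/8$, concavity of $\psi$, monotonicity of $\psi'$ and of $\xi\psi'(\xi)$, and Proposition~\ref{prop:cbf:ests}(a)) genuinely independent of $\psi$.
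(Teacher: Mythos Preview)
Your proposal is correct and follows essentially the same route as the paper: the argument preceding the proposition in Subsection~\ref{subsec:geomstable} reduces everything to checking that~\eqref{eq:geomstable3} implies~\eqref{eq:geomstable2} together with $t\psi(1/x^2)\ge 1$, and the paper itself only says ``After some simplification (we omit the details)'' at precisely the step you spell out via $\psi(\lambda_0^2)\ge(\pi/8)^2\psi(1/x^2)$, $\lambda_0^2\psi'(\lambda_0^2)\ge(\pi/8)^2(1/x^2)\psi'(1/x^2)$, the tail bound $\Gamma(n+\tfrac12;z)\le c(n)e^{-z/2}$, and taking logarithms. Your separate treatment of $n=0$ via~\eqref{eq:fptest} also matches the paper's remark that only $n\ge 1$ requires new work.
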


For example, when $\Psi(\xi) = \log(1+|\xi|^\alpha)$ ($\alpha \in (0,2]$), which corresponds to geometric stable processes, condition~\eqref{eq:geomstable3} reads
\formula{
 t & \ge \frac{c(\alpha, n)}{\psi(1 / x^2)} \, (1 + \log(1 + \log(1 + 1 / x))) .
}
In this case by Theorem~\ref{th:fptdensityasymp} we also have
\formula{
 \lim_{x \to 0^+} \expr{\sqrt{\log(1/x)} \, \frac{d^n}{dt^n} \, \pr(\tau_x > t)} & = \frac{(-1)^n \Gamma(n + 1/2)}{\pi \sqrt{\alpha/2}} \, \frac{1}{t^{n + 1/2}} \/ , && t > 0, \, n \ge 0 , \\
 \lim_{t \to \infty} \expr{t^{n + 1/2} \, \frac{d^n}{dt^n} \, \pr(\tau_x > t)} & = \frac{(-1)^n \Gamma(n + 1/2)}{\pi} \, V(x) , && x > 0 , \, n \ge 0.
}

\subsection{Exponents with very slow growth}

Let $\tilde{\psi}(\xi) = \log(1 + \xi^\alpha)$ be the complete Bernstein function studied in the previous example, and let $\psi = \tilde{\psi} \circ ... \circ \tilde{\psi}$ be the $N$-fold composition of $\tilde{\psi}$ ($\alpha \in (0, 2]$, $N \ge 2$). The function $\psi$ is the Laplace exponent of the iterated geometric $\alpha$-stable subordinator, and $\Psi(\xi) = \psi(\xi^2)$ is the L\'evy-Khintchine exponent of the corresponding subordinate Brownian motion. It is easy to verify that~\eqref{eq:fpt:a1} is satisfied. However, \eqref{eq:fpt:a2} holds only if $N = 2$ and $t \ge 1 / \alpha$ ($n$ arbitrary). Hence, for $N = 2$ asymptotic expansion~\eqref{eq:fptdensityregular} in Theorem~\ref{th:fptdensityasymp}(b) is valid only for $t > 1 / \alpha$ (part (a) of the theorem holds for all $x > 0$). When $N \ge 3$, neither part of Theorem~\ref{th:fptdensityasymp} applies.

Note that for any $N \ge 2$ and $\alpha \in (0, 2]$, $\sup_{\xi > 0} (\xi |\psi''(\xi)| / \psi'(\xi)) > 1$, so these examples do not fit into the framework of Theorem~\ref{th:fptdensityest}(b), or even that of Subsection~\ref{subsec:geomstable}. Some estimates of the derivatives of $\pr(\tau_x > t)$ for $N = 2$ and $t > 1 / \alpha$ can be obtained using Lemma~\ref{lem:fptdensityest}, see e.g. Corollary~\ref{cor:fptdensityest}.

\subsection{Compound Poisson process with Laplace distributed jumps}

Let $\Psi(\xi) = \xi^2 / (1 + \xi^2)$. Then the corresponding process $X_t$ is the compound Poisson process with Laplace distributed (i.e. with density function $(1/2) e^{-|x|}$) jumps occuring at unit rate. It was proved in~\cite{bib:kmr11} that
\formula{
 \pr(\tau_x > t) & \approx \min\left(1, \sqrt{\frac{1+x^2}{t}}\right) , && x > 0 , \, t > 0 .
}
Note that~\eqref{eq:fpt:a1} is not satisfied, so the main results of the present article cannot be used. Indeed, we have $\thet_\lambda = \arctan \lambda$ (see~\cite{bib:mk10}, Example~6.6), and so $\sup_{\lambda > 0} \thet_\lambda$ is indeed equal to $\pi/2$.

\subsection{Irregular example}
\label{subsec:irregular}

Estimates for $\thet_\lambda$ are critical for Theorems~\ref{th:fptdensityest} and~\ref{th:fptdensityasymp}. We have $\thet_\lambda \in [0, \pi/2)$, and it is easy to construct examples for which $\lim_{\lambda \to \infty} \thet_\lambda$ is any number in $[0, \pi/2]$. On the other hand, it seems unlikely that $\liminf_{\lambda \to 0^+} \thet_\lambda > \pi/4$ is possible. Below we show a rather irregular example for which $\limsup_{\lambda \to 0^+} \thet_\lambda = \pi/2$ (but $\liminf_{\lambda \to 0^+} \thet_\lambda = 0$).

Consider the complete Bernstein function
\formula{
 \psi(\xi) & = \sum_{k = 1}^\infty p_k \, \frac{\xi}{a_k + \xi} \, ,
}
where, for example, $p_k = 1 / k!$ and $a_k = 1 / (k!)^2$. Fix $q > 0$. We consider $\lambda = (q a_n)^{1/2}$ and let $n \to \infty$. We have
\formula{
 0 \le \psi(q a_n) \expr{p_n \, \frac{q}{1 + q}}^{-1} - 1 & = \sum_{k \ne n} \frac{p_k}{p_n} \, \frac{(1 + q) a_n}{a_k + q a_n} \\
 & \le \sum_{k = 1}^{n-1} \frac{p_k}{p_n} \, \frac{(1 + q) a_n}{a_k} +  \sum_{k = n + 1}^\infty \frac{p_k}{p_n} \, \frac{(1 + q) a_n}{q a_n} \\
 & = (1 + q) \sum_{k = 1}^{n-1} \frac{k!}{n!} + \frac{1 + q}{q} \sum_{k = n + 1}^\infty \frac{n!}{k!} \\
 & \le \frac{1 + q}{n} \sum_{k = 1}^{n-1} \frac{1}{(n - k - 1)!} + \frac{1 + q}{q (n + 1)} \sum_{k = n + 1}^\infty \frac{1}{(k - n - 1)!} \\
 & \le \frac{(1 + q)^2 e}{q n} = O(1/n) .
}
Here and below the constant in the $O(\cdot)$ notation may depend on $q$. In a similar manner,
\formula{
 0 \le \psi'(q a_n) \expr{p_n \, \frac{1}{(1 + q)^2 a_n}}^{-1} - 1 & = \sum_{k \ne n} \frac{p_k}{p_n} \, \frac{(1 + q)^2 a_n a_k}{(a_k + q a_n)^2} \\
 & \le \sum_{k = 1}^{n-1} \frac{p_k}{p_n} \, \frac{(1 + q)^2 a_n}{a_k} +  \sum_{k = n + 1}^\infty \frac{p_k}{p_n} \, \frac{(1 + q)^2}{q} \\
 & \le \frac{(1 + q)^3 e}{q n} = O(1/n) ,
}
and
\formula{
 0 \le |\psi''(q a_n)| \expr{p_n \, \frac{2}{(1 + q)^3 a_n^2}}^{-1} - 1 & = \sum_{k \ne n} \frac{p_k}{p_n} \, \frac{(1 + q)^3 a_n^2 a_k}{(a_k + q a_n)^3} \\
 & \le \sum_{k = 1}^{n-1} \frac{p_k}{p_n} \, \frac{(1 + q)^3 a_n}{a_k} +  \sum_{k = n + 1}^\infty \frac{p_k}{p_n} \, \frac{(1 + q)^3}{q} \\
 & \le \frac{(1 + q)^4 e}{q n} = O(1/n) .
}
It follows that
\formula{
 \psi(q a_n) & = \expr{p_n \, \frac{q}{1 + q}} (1 + O(1/n)) , \\
 \psi'(q a_n) & = \expr{p_n \, \frac{1}{(1 + q)^2 a_n}} (1 + O(1/n)) , \\
 |\psi''(q a_n)| & = \expr{p_n \, \frac{2}{(1 + q)^3 a_n^2}} (1 + O(1/n)) .
}
With the notation of the proof of Proposition~\ref{prop:thetaest1}, for $\lambda^2 = q a_n$ we obtain
\formula{
 a_1^2 & = \frac{\psi(\lambda^2)}{\lambda^2 \psi'(\lambda^2)} = (1 + q) (1 + O(1/n)) , \\ 
 a_2^2 & = \frac{\lambda^2 |\psi''(\lambda^2)|}{2 \psi'(\lambda^2)} = \frac{q}{1 + q} \, (1 + O(1/n)) , \\ 
 a^2 & = a_1^2 - 1 = q (1 + O(1/n)) . 
}
By Proposition~\ref{prop:thetaest1},
\formula{
 \limsup_{n \to \infty} \thet_{(q a_n)^{1/2}} & \le \lim_{n \to \infty} \arctan \sqrt{q (1 + O(1/n))} = \arctan \sqrt{q} ,
}
and, in a similar manner,
\formula{
 \liminf_{n \to \infty} \thet_{(q a_n)^{1/2}} & \ge \frac{1}{\pi} \expr{\expr{\arcsin \sqrt{\frac{q}{1 + q}}}^2 + \expr{\arcsin 1}^2 - \expr{\arcsin \sqrt{\frac{1}{1 + q}}}^2} \\
 & \hspace*{-5em} = \frac{\pi}{4} - \frac{1}{\pi} \expr{\arcsin \sqrt{\frac{1}{1 + q}} + \arcsin \sqrt{\frac{q}{1 + q}}} \expr{\arcsin \sqrt{\frac{1}{1 + q}} - \arcsin \sqrt{\frac{q}{1 + q}}} \\
 & \hspace*{-5em} = \frac{\pi}{4} - \frac{1}{\pi} \, \frac{\pi}{2} \, \arcsin \frac{1 - q}{1 + q} = \frac{1}{2} \, \arccos \frac{1 - q}{1 + q} = \frac{1}{2} \, \arctan \frac{2 \sqrt{q}}{1 - q} = \arctan \sqrt{q} .
}
Hence, $\lim_{n \to \infty} \thet_{(q a_n)^{1/2}} = \arctan q^{1/2}$, and therefore any number in $[0, \pi/2]$ is a partial limit of $\thet_\lambda$ as $\lambda \to 0^+$.

%
%                            ---------- o ----------
%

%\subsection*{Acknowledgments}
%The contents of this article were part of the preliminary version of the paper~\cite{bib:kmr11}. We are very grateful to anonymous referees %of that paper for their detailed comments.

%
%                            ---------- o ----------
%

\end{document}